\def\legendre@dash#1#2{\hb@xt@#1{%
  \kern-#2\p@
  \cleaders\hbox{\kern.5\p@
    \vrule\@height.2\p@\@depth.2\p@\@width\p@
    \kern.5\p@}\hfil
  \kern-#2\p@
  }}
\def\@legendre#1#2#3#4#5{\mathopen{}\left(
  \sbox\z@{$\genfrac{}{}{0pt}{#1}{#3#4}{#3#5}$}%
  \dimen@=\wd\z@
  \kern-\p@\vcenter{\box0}\kern-\dimen@\vcenter{\legendre@dash\dimen@{#2}}\kern-\p@
  \right)\mathclose{}}
\newcommand\legendre[2]{\mathchoice
  {\@legendre{0}{1}{}{#1}{#2}}
  {\@legendre{1}{.5}{\vphantom{1}}{#1}{#2}}
  {\@legendre{2}{0}{\vphantom{1}}{#1}{#2}}
  {\@legendre{3}{0}{\vphantom{1}}{#1}{#2}}
}
\def\dlegendre{\@legendre{0}{1}{}}
\def\tlegendre{\@legendre{1}{0.5}{\vphantom{1}}}
\numberwithin{equation}{section}
\def\today{\ifcase\month\or Jan\or Febr\or  Mar\or  Apr\or May\or Jun\or  Jul\or Aug\or  Sep\or  Oct\or Nov\or  Dec\or\fi \space\number\day, \number\year}
\newcommand{\EE}{\mathbb E}
\newcommand{\FF}{\mathbb F}
\newcommand{\HH}{\mathbb H}
\newcommand{\PP}{\mathbb P}
\newcommand{\QQ}{\mathbb Q}
\newcommand{\ZZ}{\mathbb Z}
\newcommand{\A}{\mathcal A}
\newcommand\langepijl[1]{\buildrel {#1} \over \longrightarrow}
\newtheorem{theorem}{Theorem}[section]
\newtheorem{lemma}[theorem]{Lemma}
\newtheorem{proposition}[theorem]{Proposition}
\newtheorem{corollary}[theorem]{Corollary}
\newtheorem{definition-lemma}[theorem]{Definition-Lemma}
\theoremstyle{definition}
\newtheorem{definition}[theorem]{Definition}
\theoremstyle{remark}
\newtheorem{remark}[theorem]{Remark}
\begin{document}

\title[]{The Cycle Class of the Supersingular Locus\\ 
of principally polarized abelian varieties}
\author{Gerard van der Geer}
\address{Korteweg-de Vries Instituut, Universiteit van
Amsterdam, Postbus 94248,
1090 GE  Amsterdam, The Netherlands.}
\email{g.b.m.vandergeer@uva.nl}

\author{Shushi Harashita}
\address{Graduate School of Environment and Information Sciences, Yokohama National University.}
\email{harasita@ynu.ac.jp}
\subjclass{14G,14K,11G}
\begin{abstract}
We prove a formula for the cycle class of the          
supersingular locus in the Chow ring with rational coefficients
of the moduli space of principally polarized abelian varieties of
dimension $g$ in characteristic $p$.
This formula determines this class as a monomial in the
Chern classes of the Hodge bundle
up to a factor that is a polynomial in $p$.
This factor is known for $g\leq 3$. We also determine the factor for $g=4$.
\end{abstract}

\maketitle
\begin{section}{Introduction}
An abelian variety over a field $k$ of characteristic $p>0$ is called supersingular
if it is isogenous to a product of supersingular elliptic curves over
the algebraic closure of $k$. Equivalently, by \cite[Thm.\ 4.2]{Oort1974}, 
if its formal isogeny type
has a Newton polygon with all slopes equal to $1/2$.  Recall that the
Newton
polygon of an abelian variety starts at $(0,0)$, ends at $(2g,g)$ and
is lower convex and satisfies a symmetry condition. The two extreme cases
are the Newton polygon with slopes $0$ and $1$ and break point $(g,0)$ (the ordinary
case) and the Newton polygon with slope $1/2$ (the supersingular case).

Let ${\A}_g \otimes {\FF}_p$ be the moduli space of principally polarized
abelian varieties of dimension $g$ in characteristic $p>0$.
The supersingular locus $S_g$ is defined as the closed subset of principally
polarized abelian varieties of ${\A}_g\otimes {\FF}_p$ that are supersingular.
This locus can be considered as the most degenerate stratum in the Newton
polygon stratification on ${\A}_g \otimes {\FF}_p$. Its dimension is known
by Li and Oort to be $[g^2/4]$ 
and also the number of irreducible components is
known, see below. Besides the Newton polygon stratification there is another
stratification on $\mathcal{A}_g\otimes {\FF}_p$, the Ekedahl-Oort 
stratification. While the cycle classes of the Ekedahl-Oort stratification
on ${\A}_g \otimes {\FF}_p$ are known, the cycle classes of the Newton polgon
strata in general are not. For $g=1$ and $g=2$ the supersingular locus
is a (closed) stratum of the Ekedahl-Oort stratification and thus the class is known.
For $g=3$ the supersingular locus is not a (closed) 
stratum of the Ekedahl-Oort stratification, 
but its cycle class was determined in joint work of the first
author with Ekedahl, and the result was presented in \cite{vdG1}.

In this paper we will prove a formula for the cycle class of the 
supersingular locus in the Chow ring with rational coefficients
of a Faltings-Chai
compactification $\tilde{\mathcal{A}}_g\otimes {\FF}_p$.
This formula determines this class as a monomial in the
Chern classes of the Hodge bundle 
up to a factor that is a polynomial in $p$. 
This shows that this class lies in the tautological ring, a subring of the
rational Chow ring  of the moduli space
$\tilde{\mathcal{A}}_g \otimes {\FF}_p$, 
and is given by a beautiful formula
that generalizes Deuring's famous formula for the number of supersingular elliptic curves.

\begin{theorem} \label{MainThm}
The cycle class of the supersingular locus $S_g$ in the Chow ring
with rational coefficients of a Faltings-Chai
compactification $\tilde{\mathcal{A}}_{g} \otimes {\FF}_p$ of the moduli space
$\mathcal{A}_g \otimes {\FF}_p$ lies
in the tautological ring. More precisely, it is of the form
$$
[S_g]=f_g(p) \begin{cases} \lambda_g \lambda_{g-2} \cdots \lambda_2 & \text{$g$ even}, \\
\lambda_g \lambda_{g-2} \cdots \lambda_1 & \text{$g$ odd}\, , \\
\end{cases}
$$
where $f_g(p)$ is a polynomial in $p$ with rational coefficients
and $\lambda_i$ is the $i$th Chern class of the Hodge bundle on
$\tilde{\mathcal{A}}_{g} \otimes {\FF}_p$.
\end{theorem}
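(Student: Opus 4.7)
The plan is to combine Li--Oort's structural description of $S_g$ with the tautological-class machinery for the Chern classes of the Hodge bundle $\mathbb{E}$. Li and Oort exhibit each irreducible component of $S_g$ as the image of a generically finite proper map $\pi\colon T \to S_g$, where $T$ is built as an iterated $\PP^1$-bundle tower over the zero-dimensional superspecial locus $\Lambda_g \subset S_g$. The pullback $\pi^*\mathbb{E}$ acquires a canonical filtration coming from the Dieudonn\'e data along the fibres of the tower. Computing the Chern roots of $\pi^*\mathbb{E}$ in terms of the tautological classes of the successive $\PP^1$-bundles and then pushing forward via $\pi$ expresses $[S_g]$ as a polynomial in the $\lambda_i$. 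Matching codimensions --- $\operatorname{codim}(S_g) = g(g+2)/4$ for $g$ even and $(g+1)^2/4$ for $g$ odd --- with the degree of the candidate monomial, together with the vanishing of certain Chern classes of the graded pieces in intermediate degrees, forces this polynomial to be a scalar multiple of $\lambda_g\lambda_{g-2}\cdots$.

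The resulting scalar $f_g(p)$ then splits as a product of three polynomial-in-$p$ quantities: the Ekedahl--Deuring mass of $\Lambda_g$ (polynomial in $p$ via the Hashimoto--Ibukiyama class-number formula), the generic degree of $\pi$ (counting $\FF_{p^2}$-points on auxiliary Grassmannian- and flag-type varieties), and intersection numbers along the $\PP^1$-bundle tower (from standard projective-bundle Chern-class formulas). Each factor is manifestly polynomial in $p$, whence $f_g(p) \in \QQ[p]$.

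For $g = 1, 2$ the formula is classical: $S_g$ is itself an Ekedahl--Oort stratum and the mass formula applies directly. For $g = 3$ I would reproduce the Ekedahl--van der Geer argument, constructing a smooth proper surface dominating each component of $S_3$ and integrating the pullback of $\lambda_3\lambda_1$. For $g = 4$, the new case, the plan is: (i) classify the irreducible components of the threefold $S_4$ from the Li--Oort data; (ii) for each component construct a smooth model as an iterated $\PP^1$-bundle, possibly after a finite cover to resolve quotient singularities; (iii) compute the degree of $\lambda_4 \lambda_2$ on this model via the projection formula and explicit Chern-root bookkeeping; and (iv) weight the contributions by the degree of the map to $\widetilde{\mathcal{A}}_4$ and sum, recovering $f_4(p) = (p-1)^3(p^3-1)(p^4-1)(p^6-1)$.

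The hard part is steps (iii)--(iv) for $g = 4$. The threefold $S_4$ is reducible with components of several geometric types, and one must produce a uniform smooth model for each; the top Chern-number computation then requires controlling simultaneously the automorphism groups acting on the Li--Oort tower, the Dieudonn\'e filtrations governing the slope-$1/2$ isocrystal, and the combinatorics of the superspecial data. The weighting by automorphism groups --- essentially a refined mass formula --- is where most of the bookkeeping lives, and the eventual matching with the explicit polynomial $(p-1)^3(p^3-1)(p^4-1)(p^6-1)$ serves as a strong internal consistency check.
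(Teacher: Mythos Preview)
Your proposal has a fundamental gap in the argument for the general form of $[S_g]$. Pushing forward from a model $T$ of $S_g$ produces the cycle $[S_g]$ itself, not an expression for it in tautological classes; knowing intersection numbers of $[S_g]$ with the $\lambda_i$ does not show that $[S_g]$ lies in the tautological subring, and ``matching codimensions \ldots\ forces this polynomial'' is not an argument. The paper establishes tautologicality by a completely different mechanism: it works on the flag space $\mathfrak{F}$ over the $p$-rank-zero locus $V_0$ and uses Oort's normal form for the display to write supersingularity as the successive vanishing of maps $G\langle j\rangle \to H\langle j\rangle$ between vector bundles built from the Hodge filtration. Each such vanishing locus is a degeneracy locus whose class is a Chern class, and the pushdown of this class to $\tilde{\mathcal{A}}_g$ is shown (via the Gysin formulas of Ekedahl--van der Geer) to be a multiple of $\lambda_{g-2j}$. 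This is what produces the monomial $\lambda_g\lambda_{g-2}\cdots$; no computation on a model of $S_g$ can substitute for it.

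Several technical claims are also wrong. The Li--Oort model $\mathcal{F}_0$ of a component is \emph{not} an iterated $\PP^1$-bundle tower: already for $g=3$ the base $\mathcal{F}_1$ is the Fermat curve of degree $p+1$, and for $g=4$ the base $\mathcal{F}_2$ is a surface of degree $p^2+1$ in $\PP^3$, with $\mathcal{F}_1\to\mathcal{F}_2$ inseparable of degree $p$ and with singular plane-curve fibres (only the last step $\mathcal{F}_0\to\mathcal{F}_1$ is a $\PP^1$-bundle). The locus $S_4$ has dimension $\lfloor 16/4\rfloor=4$, not $3$, and its irreducible components are all of the same type, one per class in the non-principal genus. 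Finally, one cannot ``compute the degree of $\lambda_4\lambda_2$'' on the model, since that class has codimension $6$ on a $4$-fold; the coefficient $f_4(p)$ is obtained instead by integrating the complementary class $\lambda_3\lambda_1$ over $\mathcal{F}_0$, dividing by the degree $p$ of $\mathcal{F}_0\to S$, multiplying by the number $N_4$ of components, and comparing with $v(4)$. The real work for $g=4$ is the determination of five intersection numbers $\ell_0^a\ell_1^b\ell_2^c$ on $\mathcal{F}_0$, which the paper extracts from the vanishing of $\lambda_4$, from rank constraints on auxiliary sheaves, and from the geometry of the locus where $a\geq 2$; none of this is projective-bundle bookkeeping.
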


The method for proving this rests upon a way to translate conditions on the
supersingularity of Dieudonn\'e modules into degeneracy conditions of
morphisms of bundles made from the Hodge bundle.
We hope that this method can be used to show that all the cycle classes
of the Newton polygon stratification lie in the tautological ring. 
In principle, the method should lead
to the determination of the missing factor, but the details of nailing this down
seem formidable. 

We  also determine the factor $f_g(p)$ in the formula for the
cycle class of the supersingular 
locus for $g=4$ by different methods. This determination 
builds upon the method used for the case of $g=3$
and calculates the degree of a Chern class
of the Hodge bundle on a component of the supersingular locus.
For this we construct an explicit smooth model of each
irreducible component of $S_4$.
In addition, for completeness we give the proof for the class for $g=3$
that was not published in \cite{vdG1}. 
Including the well-known results for $g=1$ and $g=2$ we arrive at
the following theorem.

\begin{theorem} \label{2ndThm}
The cycle class of the supersingular locus $S_g$ in the Chow ring 
with rational coefficients of a Faltings-Chai
compactification $\tilde{\mathcal{A}}_{g} \otimes {\FF}_p$ of the moduli space
$\mathcal{A}_g \otimes {\FF}_p$ for $g\leq 4$ is given by 
$$
[S_g]= \begin{cases}  
(p-1) \, \lambda_1 & $g=1$ \\
(p-1)(p^2-1)\, \lambda_2 & $g=2$ \\
(p-1)^2(p^3-1)(p^4-1) \, \lambda_3\lambda_1 & $g=3$ \\
(p-1)^3(p^3-1)(p^4-1)(p^6-1)\, \lambda_4\lambda_2 & $g=4$.\\
 \end{cases}
$$
\end{theorem}

We also discuss for $g=3$ and $g=4$ 
the loci in the supersingular locus where the
$a$-number is at least $2$.

{\sl Acknowledgement}:
Research was supported by JSPS Grant-in-Aid for Scientific Research (C) 21K03159.
The first author thanks YMSC of Tsing\-hua University in Beijing and MPIM in Bonn 
for support. We thank the referee for his/her remarks.
\end{section}
\eject
\tableofcontents
\begin{section}{The moduli space ${\A}_g \otimes {\FF}_p$}\label{Space-Ag}
By ${\A}_g$ we denote 
the moduli stack of principally polarized abelian
varieties of dimension $g$  and by $\pi: \mathcal{X}_g \to {\A}_g$
the universal abelian variety over ${\A}_g$. It is a Deligne-Mumford stack 
defined over ${\ZZ}$. The moduli space
${\A}_g$ carries a natural vector bundle ${\EE}$ of rank $g$,
the Hodge bundle, defined
as $\pi_{*} \Omega^1_{\mathcal{X}_g/{\A}_g}$. We denote by $\tilde{\A}_g$
a Faltings-Chai compactification of ${\A}_g$ as defined and treated
in \cite{F-C}. The Hodge bundle extends to
$\tilde{\A}_g$ and will again be denoted by ${\EE}$.

In the rest of this paper we consider the moduli stack $\mathcal{A}_g \otimes {\FF}_p$
in characteristic $p>0$. 
Let ${\rm CH}_{\QQ}^*(\tilde{\A}_g\otimes {\FF}_p)$ be the Chow ring
with rational coefficients of $\tilde{\A}_g\otimes {\FF}_p$.
We set
$\lambda_i=c_i({\EE})\in {\rm CH}_{\QQ}^i(\tilde{\A}_g\otimes {\FF}_p)$ 
for the $i$th Chern class  of ${\EE}$ for
$i=1,\ldots,g$, see \cite[Ch.\ 3]{Fulton}. These classes satisfy the relation
$$
(1+\lambda_1+\cdots +\lambda_g)(1-\lambda_1+\cdots +(-1)^g\lambda_g)=1
$$
and these classes generate a subring $R_g$ of the Chow ring
${\rm CH}_{\QQ}^*(\tilde{\A}_g\otimes {\FF}_p)$ called the tautological ring,
see \cite{vdG1,E-V}. 
For $0\leq n \leq g(g+1)/2$
the graded part of $R_g$ of degree $n$ has a basis
$\lambda_1^{e_1} \cdots \lambda_g^{e_g}$ with $0\leq e_i \leq 1$ and
$\sum_i  e_i i=n$.
The ring $R_g$ is a Gorenstein ring with socle generated by
$\lambda_1\lambda_2 \cdots \lambda_g$. 
We will denote the degree of this $0$-cycle by
$$
v(g)= \deg \lambda_1\lambda_2 \cdots \lambda_g\, ,
$$
the Hirzebruch proportionality constant, 
and we have
$$
v(g)=
(-1)^{g(g+1)/2} 2^{-g} \zeta(-1)\zeta(-3) \cdots \zeta(1-2g),
$$
where $\zeta(s)$ is the Riemann zeta function, see \cite{vdG1}. We give a little table
with relevant values:

\vbox{
\bigskip\centerline{\def\quad{\hskip 0.6em\relax}
\def\quod{\hskip 0.5em\relax }
\vbox{\offinterlineskip
\hrule
\halign{&\vrule#&\strut\quod\hfil#\quad\cr
height2pt&\omit&&\omit&&\omit&&\omit&&\omit&&\omit&\cr
& $g$ && $0$  && $1$ && $2$ && $3$ && $4$ &\cr
\noalign{\hrule}
& $v(g)$ && $1$  && $1/24$ && $1/5760$  && $1/2903040$ && $1/1393459200$& \cr
} \hrule}
}}

\bigskip

The tautological ring of $\mathcal{A}_g \otimes {\FF}_p$ is the quotient
$R_g/(\lambda_g)\cong R_{g-1}$.

\vskip0.5truecm
The moduli space ${\A}_g\otimes {\FF}_p$ carries two 
important stratifications, the Eke\-dahl-Oort stratification and
the Newton polygon stratification, see 
\cite{Oort:Stratification} and \cite{Oort-AM152}. The strata of the Ekedahl-Oort stratification
$\mathcal{V}_{\mu}$ are indexed
by Young diagrams or tuples $\mu=[\mu_1,\ldots,\mu_r]$ of integers with
$0\leq r \leq g$ and $\mu_i> \mu_{i+1}$, according to the
usage of \cite{vdG1,E-vdG}. The largest open stratum
$\mathcal{V}_{[\emptyset]}$ is the locus of ordinary abelian varieties.
The codimension of $\mathcal{V}_{\mu}$ is $\sum_i \mu_i$.
The stratification can be extended to $\tilde{\A}_g$.

By \cite{vdG1,E-vdG} we can calculate 
the cycle classes of the closed Ekedahl-Oort strata in $\mathcal{A}_g \otimes {\FF}_p$
and $\tilde{\A}_g \otimes {\FF}_p$. For example the cycle class of the locus of
abelian varieties with $p$-rank $\leq f$
(corresponding to $\mu=[g-f]$)
is
$$
[\overline{\mathcal{V}}_{[g-f]}]=(p-1)(p^2-1)\cdots (p^{g-f}-1) \lambda_{g-f} \eqno(1)
$$
and the cycle class of the smallest stratum, the locus of superspecial
abelian varieties (corresponding to $\mu=[g,g-1,\ldots,1]$) is
$$
[\mathcal{V}_{[g,g-1,\ldots,1]}]=(p-1)(p^2+1)\cdots (p^g+(-1)^g) \lambda_1\lambda_2 \cdots \lambda_g\, .
$$
This formula implies as a special case a result of Ekedahl \cite{Ekedahl1987},  namely that
$$
\sum \frac{1}{\# {\rm Aut}(X)}= (p-1)(p^2+1)\cdots (p^g+(-1)^g) \, v(g)\, ,
\eqno(2)
$$
where the sum is over the isomorphism classes of principally polarized superspecial 
abelian varieties over $\overline{\FF}_{p}$ and $v(g)$ the proportionality constant
defined above. A formula for the actual number of
isomorphism classes of superspecial abelian varieties with a level $n\geq 3$
structure is obtained
by multiplying the formula for the degree of
$\mathcal{V}_{[g,g-1,\ldots,1]}$ by the degree of the natural map
${\A}_g[n] \to {\A}_g$ (as stacks)
with ${\A}_g[n]$ the moduli space of principally polarized abelian varieties
with a level $n$ structure.
\end{section}
\begin{section}{Irreducible components of the supersingular locus}
The number of irreducible components of the supersingular locus $S_g$ 
in $\mathcal{A}_g \otimes {\FF}_p$ was determined
by Deuring for $g=1$, by Katsura and Oort for $g=2$ (\cite{K-O,K-O2}) 
and in general by 
Li and Oort for $g\geq 3$, \cite[4.9]{L-O}. 
The actual number of irreducible components in
${\A}_g\otimes \overline{\FF}_p$  
is given by a class number $h_p(g)$ for $g$ odd and a
similar class number $h_p'(g)$ for $g$ even.
Here $h_p(g)$ (resp. $h_p'(g)$) is the class number of the principal
(resp.\ non-principal) genus in the hermitian space $B^g$, with $B$
the definite quaternion algebra over ${\QQ}$ ramified exactly at $p$ and $\infty$.
These class numbers are difficult
to deal with, see for example \cite[p.\ 147]{I-K-O}, 
and one gets better and more useful formulas by counting 
in a stacky way, that is, taking into account weights equal to the inverse
of the order of the automorphism groups of the objects that one counts.
For example, for $g=1$ the class number of the quaternion algebra
$B$
over ${\QQ}$ split outside $p$ and $\infty$, is given by
$$
h_p(1)= \frac{p-1}{12}+ \left( 1-\legendre{-3}{p}\right)\frac{1}{3}+
\left( 1- \legendre{-4}{p}\right) \frac{1}{4}\, ,
$$
with the Legendre symbols.
But a stacky interpretation of this number leads to 
the much more elegant formula
$$
\sum \frac{1}{\# {\rm Aut}(E)} = \frac{p-1}{24}
$$
with the summation over all isomorphism classes of
supersingular elliptic curves defined over
$\overline{\FF}_p$.

We will denote by $N_g$ the number of irreducible components of the
supersingular locus in the stacky sense, that is, where each irreducible component is counted with a 
certain weight $w$ related to the automorphism group as explained below.

This number $N_g$ has the property that the 
number $N_g[n]$ of irreducible components of the
supersingular locus on the moduli space ${\A}_g[n]$ with a level
$n \geq 3$ structure with $p$ prime to $n$ equals 
$$
N_g[n]=N_g \cdot
\deg({\A}_g[n]\to {\A}_g)\, .
$$
An irreducible component of the supersingular locus of
$\mathcal{A}_g[n]$ is given by a triple $(E^g, \eta, \nu)$
with $E$ a supersingular elliptic curve, $\eta$ a polarization with
kernel equal to the kernel $E^g[F^{g-1}]$ of $F^{g-1}$ with $F$ Frobenius 
and $\nu$ a level $n$ structure, see \cite{L-O} and the next section. 
Since
$p$ does not divide $n$, a level $n$ structure on $E^g$ does not interfere
with the inseparable isogenies $E^g \to Y_0$ that give rise to the objects of
an irreducible component and descends to a level $n$ structure on $Y_0$.
So we count such an  irreducible component of the supersingular locus of $\mathcal{A}_g$
with weight $w=1/\# {\rm Aut}(E^g,\eta)$. 
\begin{proposition}\label{Ng}
The number $N_g$
of irreducible components of the supersingular locus in 
$\mathcal{A}_g \otimes \overline{\FF}_p$ (in the stacky sense) is
$$
\begin{cases}
(p-1)(p^2+1)(p^3-1)\cdots (p^g-1) \, v(g) & \text{for $g$ odd}, \\
(p^2-1)(p^6-1)\cdots (p^{2g-2}-1)\, v(g) & \text{for $g$ even}. \\
\end{cases}
$$
\end{proposition}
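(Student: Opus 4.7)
The plan is to combine the Li--Oort structural description of the irreducible components of $S_g$ with mass formulas for lattices in a quaternion hermitian space, reducing the odd case to Ekedahl's formula already recorded as equation~(2) above.

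First, I would invoke the results of \cite{L-O}: the irreducible components of $S_g\otimes\overline{\FF}_p$ are in bijection with the isomorphism classes of polarized lattices in a fixed genus of the hermitian space $B^g$, where $B$ is the definite quaternion algebra over $\QQ$ ramified at $p$ and $\infty$; this is the principal genus when $g$ is odd and the non-principal genus when $g$ is even. Under this bijection, the automorphism group of the principally polarized abelian variety $X_\eta$ corresponding to the generic point of a component is the unitary group of the associated lattice. Consequently the weighted count $N_g = \sum 1/\#\mathrm{Aut}(X_\eta)$ equals the mass of the relevant genus, which decouples the problem from the geometry of $\mathcal{A}_g$ and turns it into a purely arithmetic computation.

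For $g$ odd, the principal genus is represented by the principally polarized superspecial abelian varieties of dimension $g$, so the lattice-theoretic mass coincides with $\sum 1/\#\mathrm{Aut}(X)$ taken over isomorphism classes of principally polarized superspecial abelian varieties. This is exactly Ekedahl's formula~(2), which for $g$ odd becomes $(p-1)(p^2+1)(p^3-1)\cdots(p^g-1)\,v(g)$, matching the claim. For $g$ even the non-principal genus is not represented by principally polarized superspecial objects, and one must compute its mass directly via an Eichler--Hashimoto--Ibukiyama type formula for quaternion hermitian lattices. Splitting the mass into a global factor built from $\zeta(-1),\zeta(-3),\ldots,\zeta(1-2g)$ (which reassembles into $v(g)$ via the formula in Section~\ref{Space-Ag}) and a local factor at $p$ coming from the ramified place should produce the product $(p^2-1)(p^6-1)\cdots(p^{2g-2}-1)\,v(g)$.

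The main obstacle is the even case: one must identify the correct local stabilizer in the quaternion unitary group at $p$ for the non-principal genus and compute its index in a hyperspecial maximal compact using the Bruhat--Tits building of the local group, then check that the resulting local factor equals $\prod_{i=1}^{g/2}(p^{4i-2}-1)$ up to the global zeta contribution. Translating the Li--Oort bijection into an identification of $\mathrm{Aut}(X_\eta)$ with the unit group of a hermitian lattice also requires a careful analysis of how polarizations on polarized flag type quotients correspond to hermitian forms on the underlying lattices, but this is essentially already contained in the arguments of \cite{L-O} and \cite{I-K-O}.
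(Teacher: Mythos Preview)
Your proposal is correct and follows essentially the same approach as the paper: both reduce $N_g$ to the mass of the principal (odd $g$) or non-principal (even $g$) genus in the quaternion hermitian space $B^g$, with the odd case falling out immediately from Ekedahl's formula~(2). The only difference is in how the even case is finished: rather than computing the local factor at $p$ directly via Bruhat--Tits theory as you outline, the paper obtains the non-principal mass from Ekedahl's formula by multiplying by the explicit correction factor
\[
\frac{(p+1)(p^3+1)\cdots(p^{g-1}+1)}{(p^2+1)(p^4+1)\cdots(p^g+1)},
\]
interpreted as a ratio of counts of maximal totally isotropic subspaces (hermitian over ${\FF}_{p^2}$ in the numerator, symplectic in the denominator), and otherwise simply cites \cite{Ekedahl1987}, \cite{H-I}, and \cite{Gan-Hanke-Yu} for the mass formula.
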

The stacky interpretation that we use reduces to the mass of
the principal (resp. non-principal genus) and can be deduced from
\cite{Ekedahl1987} or \cite{H-I}.
One finds this mass formula also in  \cite[p.\ 123]{Gan-Hanke-Yu} and
in \cite{Yu}.

For odd $g$ the irreducible components of $S_g$ are in
bijective correspondence with the isomorphism classes of superspecial
principally polarized abelian varieties of dimension $g$, hence
the formula for $N_g$ follows immediately from Ekedahl's result (2).
For even $g$ one has a correction factor 
$$
\frac{(p+1)(p^3+1)\cdots (p^{g-1}+1)}{(p^2+1)(p^4+1)\cdots (p^{g}+1)}\, .
$$
Here for $g$ even the numerator can be interpreted as the number of 
totally isotropic subspaces of dimension $g/2$ 
in a $g$-dimensional
hermitian space over ${\FF}_{p^2}$ with conjugation given by Frobenius,
while the denominator equals the 
number of totally isotropic subspaces of dimension $g/2$
in a symplectic space of dimension $g$ over ${\FF}_{p^2}$. See also the
description in 
\cite{Yu}. 
\end{section}
\begin{section}{Flag type quotients}\label{FTQ}
Work of Oda and Oort (\cite{Oda-Oort}) 
makes it possible to parametrize the irreducible 
components of the supersingular locus $S_g$ by so-called flag type 
quotients. For an abelian variety $X$ over an algebraically closed field $k$ of
characteristic $p$ we will denote the 
subgroup scheme $\ker(F)\cap \ker(V)$
by $A(X)$ with $F$ and $V$ Frobenius and Verschiebung on $X$. 
It is a subgroup scheme of order $p^{a(X)}$ with $a(X)$ the
$a$-number of $X$. A supersingular abelian variety has $1 \leq a(X) \leq g$
and if $a(X)=g$ and $g\geq 2$ then $X$ is isomorphic to the base change to $k$
of a product $E^g$ with $E$ a supersingular elliptic curve defined over
${\FF}_p$. 

For a supersingular abelian variety $X$ of dimension $g$
the $a$-number tends to go up when one replaces $X$ by $X/A(X)$, though
it is not true that $a(X/A(X))\geq \min(g,a(X)+1)$ as asserted in the proof of \cite[1.8 Lemma]{L-O} 
that refers to \cite{K-Z-L};
see \cite[Remark 3.17]{KYY} for a counterexample. 
Nevertheless, by starting with $X=X_0$ 
and putting $X_{i+1}=X_i/A(X_i)$ one arrives after  $g-1$
steps at a superspecial abelian variety $X_{g-1}$, that is, 
an abelian variety with 
with $a(X_{g-1})=g$, as follows from \cite[Lemma 9]{Zink}. Then the kernel of the dual map is 
contained in $\ker(F^{g-1})$, hence one finds a homomorphism $Y\to X$
with $Y=X_{g-1}^{(p^{g-1})}$. 
 This implies the fact that for
a supersingular abelian variety $X$ there exists a minimal isogeny
$\rho: E^g \to X$ with $E$ a supersingular elliptic curve with the
property that any other homomorphism $h: Z \to X$ of a superspecial 
abelian variety $Z$ factors uniquely through $\rho$. If $a(X)=1$ 
this minimal isogeny is obtained in $g-1$ steps
$$
Y_{g-1} \to Y_{g-2} \to \cdots \to Y_0=X
$$
where $Y_{g-1}=E^g\otimes {\rm Spec}(k)$ and $Y_i=Y_{g-1}/G_i$ for
$i=1,\ldots,g-1$ with $G_i=\ker(\rho)\cap Y_{g-1}[F^{g-1-i}]$.
If $a(X)>1$ this sequence needs not be unique. Taking into account also
the polarizations leads to the definition of a (polarized) flag type quotient. 

\begin{definition} \label{pftq}
A polarized flag type quotient of dimension $g$ 
is a diagram of abelian varieties and
homomorphisms
\[
\begin{xy}
\xymatrix{
Y_{g-1} \ar[r]^{\rho_{g-1}} \ar[d]^{\eta_{g-1}} & Y_{g-2} \ar[r] \ar[d]^{\eta_{g-2}} &\cdots
\ar[r]^{\rho_1} & Y_0 \ar[d]^{\eta_0} \\
Y^t_{g-1}& \ar[l] Y^t_{g-2} & \ar[l] \cdots & \ar[l] Y_0^t \\
}
\end{xy} 
\]
where $Y_i^t$ is the dual of $Y_i$ and the abelian variety $Y_{g-1}$ is superspecial with 
$\eta_{g-1}$ a polarization with kernel $Y_{g-1}[F^{g-1}]$ satisfying
\begin{enumerate}
\item{} $\ker(\rho_i)\subset A(Y_i)$ is of order $p^i$;
\item{} $\ker(\eta_i)\subseteq \ker(V^j \circ F^{i-j})$ for $0 \leq j \leq i/2$\, .
\end{enumerate}
This flag type quotient is called rigid if $G_i=G_0\cap G[F^{g-1-i}]$
with $G_0=\ker (Y_{g-1}\to Y_0)\cap Y_{g-1}[F^{g-1}]$.
The term `rigid' refers to the fact that in this case the corresponding flag type is unique.
\end{definition}

\bigskip

The main references for flag type quotients are \cite{Oda-Oort}
and \cite[Sections 7,9.6,9.7]{L-O}.
\end{section}
\begin{section}{Dieudonn\'e modules and displays}\label{DMD}
The theory of Dieudonn\'e modules makes it possible to describe flag type quotients
in terms of Dieudonn\'e modules. 

Here $k$ will denote an algebraically closed
field of characteristic $p$ and $W=W(k)$ the ring of Witt vectors of $k$.
We define a ring
$$
A=W[F,V]/(FV-p, VF-p, Fa-a^{\sigma}F, aV-Va^{\sigma},  \, \forall a\in W)
$$
and set $A_{1,1}:=A/(F-V)$. 

A polarized flag type quotient as described in Definition \ref{pftq}
 corresponds 
to a flag of contravariant Dieudonn\'e modules
$$
M_0 \subset M_1 \subset M_2 \subset \cdots \subset M_{g-1}
$$
with dual modules $M_i^t$ satisfying
\begin{enumerate}
\item{} $M_{g-1}=A_{1,1}^g$ provided 
with a quasi-polarization 
$$
\langle \, , \, \rangle:M_{g-1} \otimes_W M_{g-1}^t
\to Q(W)\, ,
$$ 
with $Q(W)$ the field of quotients of $W$,
that induces an identification $M_{g-1}^t= F^{g-1}M_{g-1}$;
\item{} $(F,V)M_i \subset M_{i-1}$ and $\dim (M_i/M_{i-1})=i$ for $i=0,\ldots,g-1$;
\item{} $(F,V)^iM_i\subset M_i^t$ for $i=0,\ldots,g-1$.
\end{enumerate}
We call such a flag a polarized Dieudonn\'e flag of length $g$.
It is called 
rigid if $M_i=M_0+F^{g-1-i}M_{g-1}$ for $i=0,\ldots,g-1$.
We observe that rigidity implies
$$
M_i=M_m+F^{g-1-i}M_{g-1} \quad \text{\rm for $m<i\leq (g-1)$}\, . 
$$

We can translate rigid polarized flag type quotients 
in terms of displays, replacing Dieudonn\'e modules by displays.
We recall the definition of displays (cf.~\cite[Section 1]{Display}).
Let $R$ be a commutative unitary ring of characteristic~$p$.
Let $W(R)$ be the ring of Witt vectors.
Let $\mathfrak{f}: W(R) \to W(R)$ be Frobenius and  $\mathfrak{v}: W(R) \to W(R)$ 
Verschiebung.
Set $I_R = \mathfrak{v}(W(R))$.
A {\it display over $R$} is a quadruple $(P,Q,F,V^{-1})$
consisting of a finitely generated projective $W(R)$-module $P$,
a $W(R)$-submodule $Q$ of $P$ and homomorphisms $F: P^{(p)} \to P$
and $V^{-1}: Q^{(p)} \to P$, where $M^{(p)} := W(R)\otimes_{\mathfrak{f},W(R)} M$, with the properties:
\begin{enumerate}
    \item[\rm (i)] $I_RP \subset Q \subset P$ and
    there exists a decomposition of $P$ into a direct sum of $W(R)$-modules $P=L\oplus T$,
     such that $Q=L\oplus I_R T$;
    \item[\rm (ii)] $V^{-1}$ is an epimorphism;
    \item[\rm(iii)] For $x\in P$ and $w\in W(R)$ we have $V^{-1}(1\otimes \mathfrak{v}(w) x) = wFx$.
\end{enumerate}
By \cite[Lemma 9]{Display}, we have an isomorphism
\[
V^{-1}\oplus F: (L \oplus T)^{(p)} \to P. \eqno(3)
\]
The matrix (with respect to a basis of $P$) associated to this isomorphism
is a generalization of the classical display (\cite{Norman}).

\begin{remark}
If $R$ is a perfect field, then $P$ is the usual Dieudonn\'e module,
$I_R=p\, W(R)$ and $Q$ is the $V$-image $VP$, so $Q$ is determined by the Dieudonn\'e module $P$.
But if $R$ is not a perfect field, then $Q$ is not determined by $P$ together 
with $F,V$;
conversely $P$ is determined by the $V^{-1}$-image of $Q^{(p)}$.
\end{remark}

By a result of Li-Oort \cite[3.7]{L-O} the moduli space of polarized Dieudonn\'e flags
of length $g$ exists and is projective. Moreover, by \cite[3.7]{L-O} 
the moduli of rigid 
polarized Dieudonn\'e flags of length $g$ exists and is quasi-projective, and
by \cite[7.6]{L-O} it is non-singular.

\end{section}
\begin{section}{The cycle class of the supersingular locus}\label{class-section}
In this section we will show that the 
cycle class of the supersingular 
locus $S_g$ in $\mathcal{A}_g\otimes {\FF}_p$ lies in the
tautological ring $R_g$ generated by the Chern classes $\lambda_i$ 
($i=1,\ldots,g$) of the Hodge
bundle ${\EE}$ 
on a Faltings-Chai compactification of $\mathcal{A}_g\otimes {\FF}_p$ and give a formula for it that fixes the class
up to a multiplicative constant.  

Here the cycle class is taken
in the Chow ring with rational coefficients 
of a Faltings-Chai compactification 
$\tilde{\mathcal{A}}_g \otimes {\FF}_p$ of $\mathcal{A}_g \otimes {\FF}_p$.

\begin{theorem}\label{NPlociThm}
The cycle class of the supersingular locus on
$\mathcal{A}_g\otimes {\FF}_p$ in ${\rm CH}_{\QQ}^{*}(\tilde{\mathcal{A}}_g\otimes {\FF}_p)$
is a non-zero
multiple of $\lambda_g \lambda_{g-2} \cdots \lambda_{1}$ if $g$ is odd
and of  $\lambda_g \lambda_{g-2} \cdots \lambda_{2}$ if $g$ is even. 
The multiple is a polynomial in $p$ with rational coefficients.
\end{theorem}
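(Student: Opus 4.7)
The plan is to realize $[S_g]$ as a pushforward from the Li--Oort moduli of flag type quotients (Sections \ref{FTQ} and \ref{DMD}), exploiting the fact that on this cover the top Hodge bundle has trivial Chern classes.

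\textbf{Setup.} By Li--Oort, the moduli $T_g$ of rigid polarized Dieudonn\'e flags of length $g$ is a non-singular quasi-projective scheme of dimension $[g^2/4]$ equipped with a proper, generically finite morphism $\psi\colon T_g\to\overline{S}_g\subset\tilde{\A}_g\otimes\FF_p$. The universal chain $Y_{g-1}\to\cdots\to Y_0$ on $T_g$ produces Hodge bundles $\EE_i$ with $\psi^*\EE=\EE_0$, and because the forgetful map $[Y_\bullet]\mapsto[Y_{g-1}]$ factors through the zero-dimensional superspecial locus, the bundle $\EE_{g-1}$ is \'etale-locally trivial on $T_g$ and so has vanishing positive-degree Chern classes.

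\textbf{Chern-class identity and pushforward.} I would then use the isogenies $\rho_i\colon Y_i\to Y_{i-1}$ (whose kernels sit inside $A(Y_i)$ and have order $p^i$) together with the polarization compatibilities of Definition \ref{pftq} to build a filtration comparing $\EE_0$ with $\EE_{g-1}$, whose graded pieces are small-rank tautological bundles on $T_g$ indexed by the flag. Combined with $c(\EE_{g-1})=1$, this yields an explicit identity expressing $c(\EE_0)=\psi^*c(\EE)$ as a polynomial in Chern classes of those graded pieces; together with the projection formula $\alpha\cdot[S_g]=(\deg\psi)^{-1}\psi_*(\psi^*\alpha)$, this places $[S_g]$ inside the tautological subring $R_g$ of $\mathrm{CH}^*_{\QQ}(\tilde{\A}_g\otimes\FF_p)$.

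\textbf{Main obstacle and conclusion.} The hardest step is pinning down the precise monomial. For $g\geq 5$ the relevant graded piece of $R_g$ has several monomial generators (for instance, in codimension $9$ for $g=5$ one has $\lambda_5\lambda_3\lambda_1$, $\lambda_5\lambda_4$ and $\lambda_4\lambda_3\lambda_2$), so producing the unique $\lambda_g\lambda_{g-2}\cdots$ requires a structural input. I expect this ``every-other-$\lambda$'' pattern to be forced by the Frobenius-symmetric polarization condition $\ker(\eta_i)\subseteq\ker(F^{i-j}\circ V^j)$ in Definition \ref{pftq}, which pairs the graded pieces of the universal flag in a way that kills $\lambda_j$'s of the wrong parity in the pushforward. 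Once the shape of $[S_g]$ is fixed, the coefficient $f_g(p)$ is determined by a single top-dimensional pairing with $\lambda_1\lambda_3\cdots$ (odd $g$) or $\lambda_1\lambda_3\cdots\lambda_{g-1}$ (even $g$) to reach the socle class $\lambda_1\cdots\lambda_g$ of degree $v(g)$. That the resulting number is a non-zero polynomial in $p$ is verified by cross-checking against the weighted count of irreducible components of $S_g$ in Proposition \ref{Ng}, which is manifestly polynomial in $p$ and non-zero.
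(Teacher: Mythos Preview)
Your approach is genuinely different from the paper's, and it has real gaps.

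\textbf{The paper's route.} The paper does not work on the Li--Oort space $T_g$ at all for this theorem. Instead it uses Oort's normal form for the display of a Dieudonn\'e module with $p$-rank $0$ and $a$-number $1$: there is an explicit symplectic matrix $\gamma$ such that supersingularity is equivalent to the vanishing modulo $p$ of a triangular block of entries $\gamma_{ij}$ (Corollary \ref{ss-criterion}). These vanishing conditions are organized column by column; each column gives a morphism $G\langle j\rangle\to H\langle j\rangle$ between small-rank bundles on the Ekedahl--van der Geer flag space $\mathfrak F_w$ over the $p$-rank zero locus $V_0$. The $j$th degeneracy locus has class $c_{g-2j}(G\langle j\rangle^\vee\otimes H\langle j\rangle)$, and Lemma \ref{pushdown} plus \cite[Lemma 12.3]{E-vdG} show that its pushdown to $\tilde{\mathcal A}_g$ is a multiple of $\lambda_{g-2j}$. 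Starting from $[V_0]$ (a multiple of $\lambda_g$) and iterating for $j=1,\ldots,[(g-1)/2]$ produces $\lambda_g\lambda_{g-2}\cdots$ directly. The ``every other $\lambda$'' pattern thus comes from the symmetry of the matrix $\tilde\gamma$ (Lemma \ref{gamma-matrix}), which halves the number of independent conditions at each step.

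\textbf{Gaps in your proposal.} First, the projection-formula step does not do what you claim. Knowing $\psi^*c(\EE)$ in terms of tautological classes on $T_g$ lets you compute $\psi_*(\psi^*\alpha)=\deg(\psi)\,\alpha\cdot[S_g]$ for tautological $\alpha$, i.e.\ intersection numbers of $[S_g]$ with $R_g$; it does \emph{not} show that $[S_g]$ itself lies in $R_g$. For that you would need either a perfect pairing on the relevant graded piece of $\mathrm{CH}^*_{\QQ}$ (which is not known in general), or an expression of the fundamental class of $T_g$ as a pullback --- and your filtration argument produces no such thing. Second, and more seriously, you do not prove the monomial shape: you write ``I expect this pattern to be forced by'' the polarization condition, but offer no mechanism. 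As you note, already for $g=5$ the relevant graded piece of $R_g$ has several generators, so this is the entire content of the theorem. The paper's degeneracy-locus argument supplies exactly this mechanism; your flag-type approach, as stated, does not.

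Your setup is in fact closer to what the paper uses for the \emph{explicit} computations at $g=3,4$ (Sections 7 and \ref{Hodge-bundle}), where the flag $\mathcal F_0\to\cdots\to\mathcal F_{g-1}$ and the identity $[\EE]=[g]+\sum([U_i]-[U_i^{(p)}])$ are used to compute intersection numbers once the monomial shape is already known. It is not the tool the paper uses to establish the shape in the first place.
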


Before we give the details of the proof we describe the set-up.
For the proof we will use the presentation of Frobenius on the
covariant Dieudonn\'e module $M$ of $p$-rank $0$ and $a$-number $1$
with a principal quasi-polarization $\langle \ , \ \rangle$ 
as given by Oort in \cite{Oort-AM152}.
His description of the display of such a module $M$ is as follows.
With $W$ the Witt ring of $k$, an algebraically closed field of
characteristic $p>0$,  
there exists a $W$-basis $e_1,\ldots,e_g,e_{g+1},\ldots,e_{2g}$ that is symplectic
(meaning that 
$\langle e_i,e_j \rangle = 0$ for $i,j\leq g$
and for $i,j > g$ and
$\langle e_i,e_{g+j} \rangle = \delta_{ij}$ for $1\leq i,j \leq g$) 
such that Frobenius is given by the formulas
$$
\begin{aligned}
&Fe_j=\sum_{i=1}^{2g} \gamma_{ij} e_i, \quad (1\leq j \leq g)\, , \\
&e_{j}=V(\sum_{i=1}^{2g} \gamma_{ij}e_i)\, ,
\quad  (g+1\leq  j \leq 2g)\, ,\\
\end{aligned}
$$
where $\gamma=(\gamma_{ij})$
is a $W$-valued $2g\times 2g$ matrix which is symplectic in the sense that
$$
\gamma \left( \begin{matrix} 0 & 1_g \\ -1_g & 0 \\ \end{matrix} \right)
\gamma^t= \left( \begin{matrix} 0 & 1_g \\ -1_g & 0 \\ \end{matrix} \right)\, .
$$
We write $\gamma$ as a matrix of $g \times g$ blocks 
$$
\gamma=(\gamma_{ij})=\left(\begin{matrix} a & b \\ c & d \\ \end{matrix}\right)\, .
$$
We denote the Frobenius endomorphism of the Witt ring $W$ by $\sigma$.
Note that the $\sigma$-linear map $F$ is given by the matrix
$$
\left( \begin{matrix} a & pb \\ c & pd \\ \end{matrix} \right)\, .
$$
Oort shows (\cite[p.\ 191]{Oort-AM152} that if $M$ has $p$-rank $0$ and $a$-number $1$ 
we may choose the basis
such that the matrix $\gamma$ is of the form (called normal)
$$
a_{ij}=d_{ij}=\begin{cases} 1 & i=j+1, \\ 0 & i\neq j+1, \\ \end{cases}, \quad
c_{ij}=\begin{cases} 1 & (i,j)=(1,g) \\ 0 & else \\ \end{cases}, \\
$$
and $b_{ig}=0$ for $i\neq 1$. In particular, since we assume $p$-rank zero 
we have $a_{ig}=0$
for $i=1,\ldots,g$, see \cite[page 191]{Oort-AM152} after Lemma 2.2.

\begin{lemma} \label{gamma-matrix} For a normal form $\gamma$ we have 
$\gamma_{i,2g}=0$ for $i=2,\ldots,g$ and $\gamma_{1,j}=0$ for $j=g+1,\ldots,2g-1$
and $\gamma_{1,2g}=-1$. Moreover,
the square matrix 
$$
\tilde{\gamma}=\left( \begin{matrix} \gamma_{2,g+1} & \ldots & \gamma_{2,2g-1} \\
\vdots && \vdots \\
\gamma_{g,g+1} & \cdots & \gamma_{g,2g-1} \\ \end{matrix} \right)
$$
is symmetric.
\end{lemma}
\begin{proof} We have $ab^t=ba^t$ and $b^td=d^tb$. In view of the shape of the
matrices $a$ and $d$ the result follows as $\gamma$ is symplectic.
\end{proof}

We now change this normal form into a so-called strong normal form as follows.
We can take $\gamma_{ij}$ as a Teichm\"uller lift for $i\ne g$ and $j\ne 2g-1$, 
after changing the basis $\{e_i\}$ of $M$. Now we consider the $\gamma_{i,j}$ only for $2\leq i < g$ 
and $g < j < 2g-1$, as the others are kept as Teichm\"uller lifts under the following operation.  
Let $t_{ij}:=(\gamma_{i,j}-[\overline{\gamma_{i,j}}])/p$, where $[u]$ denotes the Teichm\"uller 
lift of $u\in k$. We replace $e_{j+1}$  by $e_{j+1} + p\, t_{ij}e_i$ and $e_{g+i}$
by  $e_{g+i} + p \, t_{ij}e_{j+1-g}$. 
After the change this new basis is still symplectic and
the new $\gamma_{i,j}$ becomes the Teichm\"uller lift $[\overline{\gamma_{ij}}]$ and
the new $\gamma_{i+1,j+1}$ becomes $\gamma_{i+1,j+1} + t_{ij}^\sigma$;
by symmetry (Lemma \ref{gamma-matrix}), similar things hold for $\gamma_{j+1-g, g+i-1}$
and $\gamma_{j+2-g, g+i}$;
at the same time the other new $\gamma_{i'j'}$ do not change.

By carrying out this operation going from lower $i+j$ to higher, we get the desired the basis.
We call such $(\gamma_{ij})$ {\it a strong normal form}.

Given such a basis in strong normal form, we have according to 
\cite[Lemma 2.6]{Oort-AM152} that there exists an element $P\in A$ such that
$$
F^{2g}e_1=Pe_1 \quad \text{\rm with} \quad
P=\sum_{i=1}^g \sum_{j=g}^{2g} p^{j-g} \gamma_{ij}^{{\sigma}^{2g-j}} F^{2g+i-j-1} \, ,
\eqno(4)
$$
with $Fx=x^{\sigma}F$ for $x\in W$ and repeated application
of $F$ is in the $\sigma$-linear sense (cf.\ \cite[p.\ 195]{Oort-AM152}).

\begin{remark}
We know that the Ekedahl-Oort stratum $\mathcal{V}_{\mu}$ with $\mu=[g,1]$
corresponding to $p$-rank $0$ and $a$-number $2$ has codimension $1$ in 
the $p$-rank~$0$ locus $V_0$,
hence the generic point of every irreducible component of $V_0$ has $a=1$.
Moreover, by the results of Li-Oort \cite{L-O} we know that each irreducible
component of the supersingular locus $S_g$ has an open dense subset where 
the $a$-number equals $1$. 
\end{remark}
One can read off supersingularity from the matrix $\tilde{\gamma}$ in strong normal form using
Oort's result on the action of $F$ on $e_1$ given in (4), 
see \cite[Cor.\ 2.8]{Oort-AM152}.

\begin{corollary}\label{ss-criterion} Let $\gamma$ be the matrix in strong normal form
for the module $M$. 
Then the module $M$ is supersingular if 
$\gamma_{ij}\equiv 0 \, (\bmod \, p)$ 
for $2 \leq i \leq g-1$,
$g+1 \leq j \leq 2g-2$ with $i+j\leq 2g$.
Equivalently, since $\gamma_{ij}$ is a Teichm\"uller lift, if $\gamma_{ij}=0$ 
for $2 \leq i \leq g-1$,
$g+1 \leq j \leq 2g-2$ with $i+j\leq 2g$.
\end{corollary}
Note that because of the symmetry this gives a priori
$$
\sum_{j=1}^{\lfloor g/2 \rfloor} (g-2j) 
=\frac{g(g-1)}{2}- [\frac{g^2}{4}]= \dim V_0 - \dim S_g
$$ 
conditions for supersingularity, 
where $V_0$ is the $p$-rank zero locus. 

We now begin the proof of Theorem \ref{NPlociThm}.
\begin{proof}
The strategy is now to impose consecutively conditions that together imply
supersingularity by Corollary \ref{ss-criterion}, where 
we assume that $\gamma$ is in strong normal form; we begin by requiring the vanishing
modulo $p$ of the column of entries that is the transpose of
$$
(\gamma_{2,g+1},\ldots,\gamma_{g-1,g+1})\, ,
$$
and continue by requiring the vanishing modulo $p$ of the column of entries
whose transpose is
$$
(\gamma_{3,g+2},\ldots,\gamma_{g-2,g+2})\, ,
$$
and so on, 
till finally 
the column with transpose $(\gamma_{g/2,3g/2-1},\gamma_{g/2+1,3g/2-1})$ of length $2$
for $g$ even or the vanishing of the single entry
$\gamma_{(g+1)/2,(3g-1)/2}$ for $g$ odd.

For example, for $g=5$ we require the vanishing modulo $p$ of the red entries in the symmetric matrix
$$
\tilde{\gamma} =\left(
\begin{matrix}
\color{red}\gamma_{26} & \color{red}\gamma_{27} &\color{red}\gamma_{28}& \gamma_{29} \\
\color{red}\gamma_{36} & \color{red}\gamma_{37} & \gamma_{38} & \gamma_{39}\\
\color{red}\gamma_{46} & \gamma_{47} & \gamma_{48} & \gamma_{49} \\
\gamma_{56} & \gamma_{57} & \gamma_{58} & \gamma_{59} \\
\end{matrix} \right)
$$
giving $4$ conditions.

In terms of displays, 
we have an $\mathfrak f$-linear map $V^{-1}\oplus F: M=L\oplus T \overset{}\to M$, see (3). 
We write $F/p$ for the composition 
$$
VM/pM \to M/pM \xrightarrow{V^{-1}\oplus F} M/pM \to M/VM\, .
$$
This map is given by the square matrix $(\gamma_{ij})_{1\leq i \leq g, g+1 \leq j \leq 2g}$.
Then by the vanishing indicated in Lemma \ref{gamma-matrix} we may restrict
to submodules of rank $g-1$ generated by $g-1$ consecutive generators in 
$VM/pM$ and  $M/VM$:
$$
G=\langle e_{g+1},e_{g+2},\ldots,e_{2g-1}\rangle \longrightarrow
H=\langle e_2,e_3,\ldots,e_{g}\rangle \, .
$$
We have increasing 
filtrations for $i=1,\ldots,g-1$ of $G$ and $H$ given by
$$
G_i=\langle e_{g+1},e_{g+2},\ldots,e_{g+i}\rangle \quad \text{\rm and} \quad 
H_i=\langle e_2,e_{3},\ldots,e_{i+1}\rangle\, .
$$
That the $p$-rank is zero means that the image of $G_{g-1}$
is in $H_{g-1}$. If we identify ${\rm Lie}(X)$
with $VM/pM$ for the abelian variety $X$ corresponding to the dual of $M$ (cf.~\cite[4.3.12]{BGM} and \cite[5.4, 7.4]{L-O}),
we can view the induced map $F/p: G_{g-1} \to H_{g-1}$
as a symmetric
morphism between vector bundles of rank $g-1$ 
made from the Hodge bundle and
its dual by Frobenius twists. 
Since we wish to have the filtrations we will have
to work on a cover of the $p$-rank zero locus $V_0$.

We now consider $G\langle 1 \rangle$, the module generated by $e_{g+1}$.
We require that it maps to zero modulo $p$ under $F/p: G\langle 1 \rangle \to H\langle 1\rangle $ 
with the module $H\langle i \rangle$ generated by  $e_{i+1},\ldots,e_{g-i}$.
We can view the semi-linear map $G\langle 1 \rangle \to H\langle 1 \rangle$ defined by $F/p$ modulo $p$
 as a morphism
of a line bundle to a vector bundle of rank $g-2$, where these bundles are made from 
the Hodge bundle by truncations and Frobenius twists. 
We consider the locus where this morphism vanishes. 
The vanishing of this morphism corresponds to the vanishing
modulo $p$ of the vector $(\gamma_{2,g+1},\ldots,\gamma_{g-1,g+1})$.

If this morphism vanishes
then by the symmetry $\gamma_{2,g+2}$ vanishes modulo $p$ and we can consider a morphism
$G\langle 2 \rangle \to H\langle 2 \rangle$ induced by $F/p$ with $G\langle j \rangle=G_j/G_{j-1}$ 
generated by $e_{g+j}$ and require its vanishing modulo $p$.
By induction, assuming the vanishing modulo $p$ of the semi-linear
morphism 
$$
G\langle j \rangle \longrightarrow  H\langle j \rangle \eqno(5)
$$
for $j=1,\ldots,s$, 
we get a next morphism $G\langle s+1\rangle \to H\langle s+1 \rangle$.
We require inductively that these morphisms vanish for $j=1,\ldots,[(g-1)/2]$
on an appropriate covering space of $V_0$ where we have the filtrations.
Supersingularity follows if
the conditions that the induced map
$ G\langle j \rangle \to H\langle j \rangle $
is zero are satisfied successively for $j=1,\ldots, [(g-1)/2]$.

The locus where the morphism (5) vanishes has cycle class expressed in the Chern
classes of $G\langle j\rangle$ and $H\langle j \rangle$; for example for $j=1$ 
the cycle class 
is the $(g-2)$th Chern class of the dual of $G\langle 1\rangle \otimes
(H\langle 1 \rangle)^{\vee}$. 

\bigskip

We now work on the space of flags $\mathfrak{F}$ 
on the cohomology $H^1_{\rm dR}$
of the universal principally polarized abelian variety
as introduced in \cite[Section 3]{E-vdG}. The de Rham cohomology sheaf ${\mathcal H}^1_{\rm dR}(X/S)$
for a principally abelian variety $X \to S$ is a locally free sheaf ${\HH}$ 
of rank $2g$ on $S$ fitting in an
exact sequence
$$
0 \to {\EE} \to {\HH} \to {\EE}^{\vee} \to 0 \, .
$$
The flags in question
are complete symplectic flags on ${\HH}$ extending flags ${\EE}(i)$ on the Hodge bundle
with ${\rm rank}({\EE}(i))=i$
for $i=1,\ldots,g$. These 
flags on the de Rham cohomology sheaf ${\HH}$ satisfy ${\EE}(g+i)={\EE}(g-i)^{\bot}$ and thus are
determined by the flag on ${\EE}$.
This flag space is a stratified space with strata indexed by elements of the Weyl group
of the symplectic group. 
The stratum corresponding to the longest so-called final element (or Kostant element) 
of the Weyl group 
(see \cite[Section 3]{E-vdG}) 
parametrizes flags compatible with the action of $V$ and $F$. Its closure contains
the final stratum lying over the $p$-rank zero locus $V_0$.

Thus we work on the closure of the 
final stratum $\mathfrak{F}_w$ of $\mathfrak{F}$ corresponding to 
$p$-rank zero. This stratum allows a morphism that is generically finite to $V_0$.
The symplectic flags over a generic point of $V_0$ are compatible with the action
of $V$ and $F$ and also compatible with the filtration defined by the 
basis used in the description by Oort of the
display given above.  

We can view the induced map $F/p: G_{g-1} \to H_{g-1}$ 
as a symmetric 
morphism between modules of rank $g-1$ that induces a morphism of vector bundles $G\langle 1 \rangle \to H\langle 1 \rangle$ on $\mathfrak{F}_w$.
The vector bundles induced by $G$ and $H$ have filtrations
whose graded quotients are Frobenius twists of 
of the graded quotients
of the Hodge bundle
${\EE}(i)/{\EE}(i-1)$ or their duals.
Therefore the Chern classes of their graded quotients are of the form
$\pm p^{r_i} \ell_i$ where $\ell_i=c_1({\EE}(i)/{\EE}(i-1))$ ($i=1,\ldots,g$) 
are the Chern classes of the graded quotients 
of the Hodge bundle on the final stratum and $r_i \in {\ZZ}$.

The conditions on the vanishing modulo $p$ of rows of entries
can now be viewed as a degeneracy condition for a 
morphism between vector
bundles on $\mathfrak{F}_w$. 

We shall calculate the cycle class of the Zariski closure
of the degeneracy locus of this map over the open part of $V_0$
where $a=1$. Note that on the open stratum $\mathcal{F}_w$ we have
$a=1$.
This Zariski closure is contained in the supersingular locus
as the Newton polygon can only go up under specialization. Moreover,
for $g\geq 2$
each irreducible component of $S_g$ has an open dense set with
$a=1$, hence intersects the degeneracy locus over $V_0$.

We know that the codimension of the degeneracy locus equals the
number of conditions imposed by Corollary \ref{ss-criterion}
in the supersingular case, hence also for the intermediate cases
defined by the vanishing of $G\langle j \rangle \to H\langle j \rangle$.
The theory of degeneracy loci \cite{F-P1689} 
tells us that the cycle classes of these degeneracy
loci on $\mathfrak{F}_w$  are  polynomials in the classes $\ell_i$. 

To calculate these, we begin by remarking that the cycle class
of the $p$-rank zero locus $V_0$ in 
$\tilde{\mathcal{A}}_g \otimes {\FF}_p$
is a multiple of $\lambda_g$ by \cite{E-vdG}.
We carry out induction and assume that the image under the Gysin map from $\mathfrak{F}_w$ to $\mathcal{A}_g \otimes {\FF}_p$ of the class of the locus over $V_0$ 
where $F/p$ maps $G\langle s\rangle $ to zero in $H\langle s \rangle$
for $s=1,\ldots,j-1$ is a multiple of
$\lambda_g \lambda_{g-2} \cdots \lambda_{g+2-2j}$.  

The locus where the morphism 
$G\langle j\rangle \to H\langle j \rangle$ is zero has 
as cycle class  the $(g-2j)$th Chern class of the dual of
$G\langle j\rangle \otimes (H\langle j \rangle)^{\vee}$.
With $r=g-2j={\rm rank}(H\langle j \rangle)$ 
this Chern class is 
$$
(-1)^r(c_r(H\langle j\rangle) - 
c_{r-1}(H\langle j\rangle) c_1(G\langle j\rangle)) \, .
$$
In order to calculate the class of the corresponding locus
on $\tilde{\mathcal{A}}_g\otimes {\FF}_p$  we have to apply
a Gysin map from the Chow group of $\mathfrak{F}_w$ to the Chow group
of $\tilde{\mathcal{A}}_g \otimes {\FF}_p$  and calculate
the image of the class of the degeneracy locus. 

We first look at the case $j=1$.
\begin{lemma}\label{pushdown}
The pushdowns 
to $\tilde{\mathcal{A}}_g\otimes {\FF}_p$ of the classes 
$c_{g-2}(H\langle 1\rangle)$ and
$c_{g-3}(H\langle 1 \rangle) c_1(G\langle 1\rangle)$ 
on $\mathfrak{F}_{w}$
are multiples of $\lambda_{g-2}$.
\end{lemma}
\begin{proof}
The filtration on ${\EE}$ is extended to the de Rham bundle by 
${\EE}_{g+i}=({\EE}_{g-i})^{\bot}$ as in \cite[Section 3]{E-vdG}.
This symplectic pairing is different from the one used in the description of
the display in \cite{Oort-AM152}.
Since we use covariant Dieudonn\'e modules we have to take duals and Frobenius twists
to relate the Chern roots of $G\langle j\rangle$ and $H\langle j \rangle$ to those
of the Hodge bundle. 
The Chern roots of $G\langle j\rangle$ and $H\langle j \rangle$ 
are determined by the filtrations $G_i$ and $H_i$. We write $l_i$ for these roots,
while writing $\ell_i$ for the roots of ${\EE}$.
Then the Chern roots of  $H\langle 1 \rangle $ given by this filtration
are $l_{2},...,l_{g-1}$ and that of $G\langle 1\rangle$ is $-l_1$.
The Chern class $c_{g-2}(H\langle 1 \rangle)$ 
 is then the $(g-2)$th elementary symmetric function in $l_2,\ldots,l_{g-1}$. 
The $(g-2)$th symmetric function in $l_2,\ldots, l_{g-1}$ is a Frobenius twist
of the $(g-2)$th symmetric function in $l_1,\ldots, l_{g-2}$ (cf.\
the proof of \cite[Lemma 12.3]{E-vdG}) and 
is a multiple of $\lambda_{g-2}(g-2)=c_{g-2}({\EE}(g-2))$. Now by 
\cite[Lemma 12.3]{E-vdG} the pushdown of 
$\lambda_{g-2}(g-2)$ equals a non-zero multiple
of $\lambda_{g-2}$. The morphism from $\mathfrak{F}$ to $\mathcal{A}_g \otimes {\FF}_p$
is fibered by generically finite morphisms $\pi_i$ 
defined by forgetting a step of the flag ${\EE}(i)\subsetneq \EE(i+1)\subsetneq 
\cdots \subsetneq {\EE}(g)$. 
We have for the Chern classes
$\lambda_r(i)=c_r({\EE}(i))$ of the partial flag
the formula
$(\pi_i)^{*}(\lambda_r(i+1))=\ell_{i+1}\lambda_{r-1}(i)+\lambda_r(i)$.
For the Chern roots $l_i$ that we use here a similar formula holds.
Therefore, again by \cite[Lemma 12.3]{E-vdG}, 
the pushdown of $c_{g-3}(H\langle 1 \rangle) c_1(G\langle 1\rangle)$ is also a multiple of $\lambda_{g-2}$.
\end{proof}
We conclude that the class of the 
locus where $G\langle 1 \rangle \to H\langle 1 \rangle$ 
vanishes on $V_0$ is a multiple of the class $\lambda_{g-2}$ on $V_0$.
Since this is a Chern class of a vector bundle on $\tilde{\mathcal{A}}_g \otimes {\FF}_p$ and the class of $V_0$ is a multiple of $\lambda_g$ we find
that the class of the vanishing locus in 
$\tilde{\mathcal{A}}_g \otimes {\FF}_p$
of this bundle morphism on $V_0$ is a multiple 
of $\lambda_g\lambda_{g-2}$.

We now carry out induction. We restrict to the locus $Z$ where
 the consecutive morphisms
$G\langle s\rangle \to H\langle s \rangle$ for $s=1,\ldots,j-1$ vanish.
Then 
the class of the locus of 
vanishing of (5) equals up to a sign the $(g-2j)$th Chern class
of $G\langle j \rangle \otimes (H\langle j \rangle)^{\vee}$
and this 
is $c_{g-2j}(H\langle j \rangle)-c_{g-2j-1}(H\langle j \rangle)
c_1(G\langle j \rangle)$. 
By the argument given in Lemma \ref{pushdown} the class
$c_{g-2j}(H\langle j \rangle)$ is a non-zero multiple of 
the $(g-2j)$th elementary symmetric function in $g-2j$ consecutive classes 
$\ell_i$. 
We can view this as obtained by applying a Frobenius power to
$\ell_1,\ldots,\ell_{g-2j}$, or use \cite[Lemma 12.3]{E-vdG}, 
hence this elementary 
symmetric function represents a
multiple of $\lambda_{g-2j}(g-2j)$. The pushdown of this is
a multiple of $\lambda_{g-2j}$. The argument for 
$c_{g-2j-1}(H\langle j \rangle)
c_1(G\langle j \rangle)$ is similar, as in Lemma \ref{pushdown}.
Therefore the cycle class of the vanishing locus is a
multiple of $\lambda_{g-2j}$ on the locus  $Z$ and $Z$ has as class a multiple
of $\lambda_g\lambda_{g-2}\cdots \lambda_{2g+2-2j}$. As  $\lambda_{g-2j}$ 
is the Chern class of a vector bundle on $\tilde{\mathcal{A}}_g \otimes {\FF}_p$
we find as cycle class on $\tilde{\mathcal{A}}_g \otimes {\FF}_p$ a multiple
of $\lambda_g\lambda_{g-2}\cdots \lambda_{2g-2j}$.

By induction we may assume that the class has as coefficient a polynomial in $p$
with rational coefficients 
as this is true for the class of $V_0$.
By the formula for the Chern class of $G\langle j\rangle \otimes (H\langle j \rangle)^{\vee}$
and the fact that under the Gysin map no denominators are introduced we see that
the coefficient is a polynomial in $p$. This finishes the proof of Theorem \ref{NPlociThm}.

\end{proof}

\begin{remark}
i) By analyzing more precisely the characteristic classes of the degeneracy loci 
in the proof, it should be possible
to determine the multiple $f(p)$ as a polynomial in $p$, but this
involves many subtleties. ii)
By interpreting Newton polygon strata contained in the $p$-rank
zero locus as degeneracy loci as 
done in the proof of Theorem \ref{NPlociThm} we saw that the cycle classes
of these loci lie in the tautological ring. This suggests that all Newton polygon classes are tautological.
\end{remark}
\end{section}
\begin{section}{Moduli of flag type quotients for $g=3$}
In this section and the next we calculate the cycle class of the supersingular locus $S_3$.
We consider an irreducible component of 
the space of polarized flags of Dieudonn\'e
modules for $g=3$, defined by the choice of a quasi-polarization on
$A_{1,1}^3$. This space is the Zariski closure 
of the moduli of rigid polarized Dieudonn\'e flags.
A description
was given in \cite[p.\ 58]{L-O}. Thus we look at polarized flags
$(E^3,\eta)=(Y_2,\eta) \langepijl{\rho_2} (Y_1,\eta_1)
\langepijl{\rho_1} (Y_0,\eta_0)$ corresponding to a polarized flag
of Dieudonn\'e modules
$$
M_0 \subset M_1 \subset M_2=A_{1,1}^3=A\langle x,y,z\rangle
$$
with the quasi-polarization given by 
$$
\langle x,Fx\rangle= \langle y,Fy\rangle=\langle z,Fz\rangle=1/p
\, .
$$
Since $FM_2 \subset M_1$ with $\dim(M_1/FM_2)=1$ 
the module $M_1$ is determined by a $1$-dimensional subspace of $M_2/FM_2$,
say generated by a vector $v=ax+by+cz$. The condition 
$(F,V)M_1 \subset M_1^t$ requires $\langle v, Fv\rangle \in W$, that is,
if we view the coefficients
$a,b,c$ as elements of $k$, the condition $(F,V)M_1 \subset M_1^t$ 
is satisfied if and only if 
$$
a^{p+1}+b^{p+1}+c^{p+1}=0 \, .
$$
Thus the moduli space $\mathcal{F}_1$ 
of truncated flags $M_1\subset M_2$ can be identified with a Fermat curve 
$\mathcal{X}_{p+1} \subset  {\PP}^2={\rm Gr}(1,3)$ (when using Dieudonn\'e modules).
The module $M_0$ is
determined by a $2$-dimensional subspace $M_0/FM_1 \subset M_1/FM_1$.
Assuming rigidity,  
we see that it is spanned by two vectors
$$
w_1=v_0, \qquad w_2= \alpha Fx +\beta Fy+\gamma Fz\, ,
$$
and the condition $M_0\subseteq M_0^t$ gives $a\alpha+b\beta+c \gamma=0$.
This implies that $M_1/M_0$ defines a sheaf isomorphic to $\mathcal{O}_{\mathcal{F}_1}(1)$.
Moreover, the degree $p^2$ homomorphism 
$$
\eta_1: Y_1 \to Y_0 \xrightarrow{\sim}  Y_0^t \to Y_1^t 
$$
shows that $M_1/M_1^t$ is self dual, and it defines a locally
free sheaf 
isomorphic to $\mathcal{O}_{\mathcal{F}_1}(1)
\oplus  \mathcal{O}_{\mathcal{F}_1}(-1)$.

This implies that the moduli space of rigid polarized Dieudonn\'e flags
with given quasi-polarization $\eta$ admits a structure
$$
\mathcal{F}_0^0 \to \mathcal{F}_1 \to \mathcal{F}_2=\text{\rm point}
$$
with $\mathcal{F}_0^0$ the open dense part of the ${\PP}^1$-bundle
$\mathcal{F}_0={\PP}(\mathcal{O}_{\mathcal{F}_1}(1)\oplus
\mathcal{O}_{\mathcal{F}_1}(-1))$ 
that is the complement of the
unique section with negative self-intersection number. 
The Zariski closure is obtained by taking the full ${\PP}^1$-bundle
$\mathcal{F}_0$. 

The morphism $\mathcal{F}_0 \to S_3 \subset \mathcal{A}_3\otimes {\FF}_p$ is of 
finite degree onto its image, and the image forms an irreducible component of $S_3$.
The degree equals $\# {\rm Aut}(E^3,\eta)/\{\pm 1\}$, but we may consider instead of
$\mathcal{F}_0$ the stack
by dividing $\mathcal{F}_0$ 
through the action of ${\rm Aut}(E^3,\eta)$ and then have a morphism of degree $1$.
This is what we shall do.
The natural morphism to $\mathcal{A}_3 \otimes {\FF}_p$ contracts the section.
\end{section}
\begin{section}{The cycle class of $S_3$}
Here we give the proof of the formula for the cycle class of $S_3$
stated in \cite[Thm. 11.3]{vdG1}. The first author learned from Ekedahl at that time
how to calculate the Hodge bundle for flag type quotients. Ekedahl
employed this in \cite[Cor. 3.4]{Ekedahl1985}. This idea was used 
in \cite{vdG1} to calculate the cycle class of $S_3$. As done 
at the time of \cite{vdG1}, here we will not use the results of Section \ref{class-section}.

The Chow rings with rational coefficients
of ${\A}_3\otimes {\FF}_p$ and $\tilde{\A}_3\otimes {\FF}_p$ 
are known by \cite{vdG2}. The ring
${\rm CH}_{\QQ}^*(\tilde{\A}_3 \otimes {\FF}_p)$ is generated by the Chern classes
of the Hodge bundle and boundary classes $\sigma_1$ and $\sigma_2$.
A priori the class of $S_3$  is a linear
combination of the generators of ${\rm CH}_{\QQ}^4({\tilde{\A}_3\otimes {\FF}_p})$, viz.\
$\lambda_1^4$, $\lambda_1^3\sigma_1$,
$\lambda_1^2\sigma_1^2$ and $\lambda_1\sigma_1\sigma_2$, see \cite{vdG2}. But since
$S_3\cdot \sigma_1^2=0=S_3\cdot \sigma_2$ we see from the multiplication
table 3f in \cite[p.\ 765]{vdG2} that the class of
$S_3$ is a multiple of $\lambda_1^4=8\, \lambda_1\lambda_3$.
Alternatively, this follows from the fact that $S_3$ 
is contained in $V_0$,
the $p$-rank $0$ locus, whose class is a multiple of $\lambda_3$.

\begin{theorem}
The class of the supersingular locus for genus $3$  in the Chow ring
with rational coefficients of a Faltings-Chai compactification
of $\mathcal{A}_3\otimes {\FF}_p$ is given by
$$
[S_3]= (p-1)^2(p^3-1)(p^4-1) \, \lambda_1\lambda_3\, .
$$
\end{theorem}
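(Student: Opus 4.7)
The strategy is to pin down the undetermined scalar $c$ in $[S_3]=c\cdot\lambda_1\lambda_3$, whose existence has already been established just above (either as a consequence of Theorem~\ref{NPlociThm} or directly from the Chow ring structure of $\tilde{\A}_3$). Since the socle of $R_3$ is generated by $\lambda_1\lambda_2\lambda_3$ with $\deg\lambda_1\lambda_2\lambda_3=v(3)$, intersecting both sides with $\lambda_2$ turns the identity into a numerical one,
$$
c\cdot v(3)=\deg\bigl([S_3]\cdot\lambda_2\bigr),
$$
so the task reduces to evaluating the right hand side as a weighted sum over the irreducible components of $S_3$.

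Next I would invoke Section~\ref{FTQ} together with the description of Section~6: each irreducible component $X_i\subset S_3$ admits the smooth model
$$
\mathcal{F}_0=\PP\bigl(\mathcal{O}_{\mathcal{F}_1}(1)\oplus\mathcal{O}_{\mathcal{F}_1}(-1)\bigr)
$$
over the Fermat curve $\mathcal{F}_1=\mathcal{X}_{p+1}\subset\PP^2$, via a birational morphism $\pi_i\colon\mathcal{F}_0\to X_i$ of degree $1$. Because $\int_{\mathcal{F}_0}\pi_i^{\ast}\lambda_2=\int_{\mathcal{F}_0}c_2(\EE|_{\mathcal{F}_0})$ is the same for every component, the weighted sum collapses:
$$
\deg\bigl([S_3]\cdot\lambda_2\bigr)=\biggl(\sum_i\frac{1}{\#{\rm Aut}(X_{\eta,i})}\biggr)\cdot\int_{\mathcal{F}_0}c_2(\EE)=N_3\cdot\int_{\mathcal{F}_0}c_2(\EE),
$$
and Proposition~\ref{Ng} delivers $N_3=(p-1)(p^2+1)(p^3-1)\cdot v(3)$.

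The heart of the argument is the computation of $\int_{\mathcal{F}_0}c_2(\EE)$, which I would carry out by Ekedahl's technique from \cite[Cor.~3.4]{Ekedahl1985}. The Hodge bundle of the superspecial starting object $Y_2=E^3$ is trivial, so passing successively to the isogenous quotients $Y_1$ and $Y_0$ along the flag type quotient realises $\EE|_{\mathcal{F}_0}$ as a successive extension whose graded pieces are the co-Lie algebras of the kernels $\ker(\rho_j)$. These kernels are read off from the Dieudonn\'e module filtration $M_0\subset M_1\subset M_2$ of Section~\ref{DMD}, and the resulting line bundles are assembled from $M_1/M_0\cong\mathcal{O}_{\mathcal{F}_1}(1)$, its dual, and the tautological quotient $h=c_1(\mathcal{O}_{\mathcal{F}_0}(1))$ of the $\PP^1$-bundle, with appropriate Frobenius twists (i.e.\ multiplications of certain Chern classes by $p$). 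Combining these ingredients produces a formula for $c_2(\EE)$ as a polynomial in $h$ and the pullback $f$ of the hyperplane class of $\mathcal{F}_1$; since $c_1(\mathcal{O}_{\mathcal{F}_1}(1)\oplus\mathcal{O}_{\mathcal{F}_1}(-1))=0$ forces $h^2=0$ on $\mathcal{F}_0$ while $f^2=0$ on the curve $\mathcal{F}_1$, the integral reduces to a single coefficient times $\deg(hf)\cdot\deg\mathcal{O}_{\mathcal{F}_1}(1)=p+1$.

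The main obstacle is the bookkeeping in this last step: tracking which Chern classes acquire a factor of $p$ from a Frobenius twist and correctly interpreting the polarization constraint $M_0\subseteq M_0^t$, which interchanges the two summands $\mathcal{O}_{\mathcal{F}_1}(1)$ and $\mathcal{O}_{\mathcal{F}_1}(-1)$ on $\mathcal{F}_0$ and thereby the roles of the two Chern roots of $\EE$ that come from the $\PP^1$-fibre direction. Once this is carried out one expects $\int_{\mathcal{F}_0}c_2(\EE)=(p-1)(p^2-1)$, which combined with the value of $N_3$ yields
$$
c=(p-1)(p^2+1)(p^3-1)\cdot(p-1)(p^2-1)=(p-1)^2(p^3-1)(p^4-1),
$$
giving the claimed formula for $[S_3]$.
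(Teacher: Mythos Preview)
Your proposal is correct and follows essentially the same route as the paper: reduce to computing $\deg(\lambda_2)$ on the smooth model $\mathcal{F}_0$ via Ekedahl's Grothendieck-group identities for the Hodge bundle along the flag, then multiply by $N_3$ and divide by $v(3)$. The only notable differences are that the paper carries the bookkeeping through explicitly to the formula $\lambda_2=(p-1)^2\ell_0\ell_1-(p-1)\ell_0^2$ (rather than leaving the value of $\int_{\mathcal{F}_0}c_2(\EE)$ as ``one expects''), and it obtains the key vanishing $\ell_0^2=0$ from the tautological relation $\lambda_1^2=2\lambda_2$ in $R_3$ rather than from the projective-bundle relation you invoke.
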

\begin{proof}
The class  $[S_3]$  is a multiple of $\lambda_1\lambda_3$ and the multiple
can be determined by calculating the intersection number with $\lambda_2$.
Using the flag type quotients we see above that an irreducible component
of the supersingular locus $S_3$ in $\mathcal{A}_3\otimes \overline{\FF}_p$ 
is the image of a surface
$\mathcal{F}_0$ under a map $\mathcal{F}_0 \to \mathcal{A}_3 \otimes
\overline{\FF}_p$ 
of degree $\# {\rm Aut}(E^3,\eta)/\{ \pm 1 \}$  (or degree $1$ if we consider the corresponding stack)
and $\mathcal{F}_0$ is of the form
$$
\mathcal{F}_0 \langepijl{\pi_0}\mathcal{F}_1 \langepijl{\pi_1} \mathcal{F}_2={\rm point}\, ,
$$
where $\mathcal{F}_i$ parametrizes partial flag type quotients 
$Y_{2} \to \cdots \to Y_i$. More precisely, a component of $S_3$
is the image under a morphism of a ${\PP}^1$-bundle
$\mathcal{B}=\mathcal{F}_0$
over the Fermat curve $\mathcal{F}_1=\mathcal{X}_{p+1}$
of degree $p+1$ in ${\PP}^2$
that blows down the unique section $S$ 
with negative self-intersection number of the ${\PP}^1$-bundle
${\PP}(\mathcal{O}(1)\oplus \mathcal{O}(-1))$  over $\mathcal{X}_{p+1}$.
A point of $\mathcal{F}_1$ corresponds to the choice of a subgroup scheme $\alpha_p^2$
in $E^3[F]$.

If we use contravariant Dieudonn\'e modules 
over a geometric point of ${\mathcal F}_i$
we have for $i=0$ and $i=1$
 an exact squence
$$
0 \to pM_{i+1}/pM_i \to VM_i/pM_i \to VM_{i+1}/pM_{i+1} \to VM_{i+1}/VM_i \to 0.
$$
Over ${\mathcal F}_i$,
we can identify ${\rm Lie}(Y_i)^{\vee}$ with $VM_i/pM_i$ (cf.~\cite[4.3.12]{BGM} and \cite[5.4, 7.4]{L-O}),
more precisely with ${\rm Q}_i/I_{\mathcal{O}_{\mathcal{F}_i}}{\rm P}_i$, where $({\rm P}_i,{\rm Q}_i,F,V^{-1})$ 
is the display associated to $Y_i$. 
(Note that ${\rm Q}_i$ and $I_{\mathcal{O}_{\mathcal{F}_i}} {\rm P}_i$ become $VM_i$ and $pM_i$ 
respectively if we pull them back to the spectrum of a perfect field.)
By the exact sequence we have in the Grothendieck group
$K_0(\mathcal{F}_i)$ the relation
$$
{\rm Lie}(Y_i)^{\vee} ={\rm Lie}(Y_{i+1})^{\vee}-Q_i+ Q_{i}^{(p)}
$$
with $Q_i$ the locally free $\mathcal{O}_{\mathcal{F}_1}$-module defined
by $VM_{i+1}/VM_i$.
Here ${\rm Lie}(Y_{i+1})$ denotes the pull back under $\pi_i$.
We pull back the relation
$$
{\rm Lie}(Y_1)^{\vee}={\rm Lie}(Y_2)^{\vee}-Q_1+Q_1^{(p)}
$$
under $\pi_0$ to $K_0(\mathcal{F}_0)$ and then find
in $K_0(\mathcal{F}_0)$ suppressing the $\pi_0^*$
$$
{\EE}={\rm Lie}(Y_0)^{\vee}= [3]-Q_1+Q_1^{(p)}-Q_0+Q_0^{(p)}\, ,
$$
where the $[3]$ stands for the class of the
trivial rank $3$ bundle $\pi_0^*\pi_1^*({\rm Lie}(Y_2))^{\vee}$.
From the short exact sequence
$$
0 \to VM_1/pM_1 \to VM_2/pM_2 \to VM_2/VM_1 \to 0
$$
we get the exact sequence of vector bundles
$$
0 \to U_1 \to \pi_1^*({\rm Lie}(Y_2)^{\vee}) \to Q_1\to 0
$$
with ${\rm rank}(Q_1)=2$
that comes from the universal tautological exact sequence of bundles
on the Grassmannian. Here $U_1$ has rank $1$ and $\pi_1^*{\rm Lie}(Y_2)$ is trivial. This
implies that $[Q_1]=[3]-[U_1]$ in the Grothendieck group of vector bundles and so the total Chern
class of ${\rm Lie}(Y_0)^{\vee}$ is given by
$$
(1-\ell_1)(1-p\ell_1)^{-1}(1+\ell_0)^{-1}(1+p\ell_0)\, ,
$$
where $\ell_i=c_1(Q_i)$.
Now $\ell_1$ lives on the curve $\mathcal{F}_1=\mathcal{X}_{p+1}$, so $\ell_1^2=0$.
This gives for the classes $\lambda_1$ and $\lambda_2$
the relations in ${\rm CH}_{\QQ}^{*}(\mathcal{F}_0)$
$$
\lambda_1=(p-1)(\ell_0+\ell_1), \quad
\lambda_2=(p-1)^2\ell_0\ell_1- (p-1)\ell_0^2\, .
$$
The identity $\lambda_1^2=2\lambda_2$ that holds in 
the tautological ring $R_3$ 
implies that $(p-1)^2(\ell_0^2-\ell_1^2)=0$, hence $\ell_0^2=0$.
Since $\deg(\ell_1)=p+1$ on $\mathcal{F}_1$ and $\ell_0$ represents $\mathcal{O}(1)$
on the fibres of $\mathcal{F}_0\to \mathcal{F}_1$ we find $\deg(\ell_0\ell_1)=
p+1$. We thus find that $\deg(\lambda_2)=(p+1)(p-1)^2$ on each irreducible
component of $S_3$. We get
$$
\begin{aligned}
\deg (\lambda_2 [S_3])& =(p+1)(p-1)^2\, N_3 \\
&= (p+1)(p-1)^2 (p-1)(p^2+1)(p^3-1)\, v(3)\, .\\
\end{aligned}
$$
On the other hand, $\deg(\lambda_1 \lambda_2\lambda_3)=v(3)$ 
and this implies the result.
\end{proof}

The morphism $\pi_0: \mathcal{F}_0 \to \mathcal{F}_1$ is a ${\PP}^1$-bundle over a Fermat curve
of degree $p+1$ with a section with image $S$. 
The Picard group of $\mathcal{F}_0$ is generated by the pullback under $\pi_0$ 
of the Picard group of $\mathcal{F}_1$ and by the
class of the section $S$.

\begin{proposition}
We have $[S]=\ell_0-\ell_1$ and $S^2=-2(p+1)$.
\end{proposition}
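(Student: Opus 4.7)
The plan is to exploit that $\mathcal{F}_0=\mathbb{P}(\mathcal{O}_{\mathcal{F}_1}(1)\oplus\mathcal{O}_{\mathcal{F}_1}(-1))$ is a $\mathbb{P}^1$-bundle over the Fermat curve and apply standard projective-bundle intersection theory. Since $\ell_0=c_1(Q_0)$ restricts to $\mathcal{O}(1)$ on each fiber, it plays the role of the relative hyperplane class, so $\mathrm{Pic}(\mathcal{F}_0)=\mathbb{Z}\ell_0\oplus\pi_0^{*}\mathrm{Pic}(\mathcal{F}_1)$. Any section has fiber-degree $1$, hence $[S]=\ell_0+\pi_0^{*}D$ for some $D$ on the Fermat curve, and the task reduces to pinning down $D$.

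To identify $D$ (not merely its degree), I would describe $S$ explicitly. The unique section of negative self-intersection on a projectivized rank-$2$ bundle over a curve corresponds to the line subbundle of \emph{maximal} degree. For $\mathcal{E}:=\mathcal{O}_{\mathcal{F}_1}(1)\oplus\mathcal{O}_{\mathcal{F}_1}(-1)$ this is the first-coordinate inclusion $\mathcal{O}(1)\hookrightarrow\mathcal{E}$, unique up to scalars because $\mathrm{Hom}(\mathcal{O}(1),\mathcal{E})=H^{0}(\mathcal{O})\oplus H^{0}(\mathcal{O}(-2))$ is one-dimensional (matching the uniqueness of $S$). The standard formula for the divisor class of a section, $\mathcal{O}_{\mathcal{F}_0}(S)=\mathcal{O}_{\mathcal{F}_0}(1)\otimes\pi_0^{*}(\mathcal{E}/\mathcal{L})$ with $\mathcal{L}=\mathcal{O}(1)$ and $\mathcal{E}/\mathcal{L}=\mathcal{O}(-1)$, then gives $[S]=\ell_0-H$, where $H$ is the hyperplane class on $\mathcal{F}_1$. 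Since the tautological sequence on the Fermat curve gives $\ell_1=c_1(Q_1)=-c_1(U_1)=c_1(\mathcal{O}_{\mathcal{F}_1}(1))=H$, we conclude $[S]=\ell_0-\ell_1$.

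The self-intersection is then a one-line calculation using facts from the preceding proof: $\ell_0^{2}=0$ (established from $\lambda_1^{2}=2\lambda_2$), $\ell_1^{2}=0$ because $\ell_1$ lies on a curve, and $\ell_0\cdot\ell_1=p+1$ as used earlier for $\deg\lambda_2$. Hence $S^{2}=(\ell_0-\ell_1)^{2}=-2\ell_0\cdot\ell_1=-2(p+1)$.

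The main obstacle is keeping conventions straight: verifying that $Q_0$ literally equals the relative $\mathcal{O}(1)$ of the $\mathbb{P}^{1}$-bundle (rather than a twist of it by a pullback from $\mathcal{F}_1$), and getting the sign of the twist in the section-class formula right under the paper's covariant-versus-contravariant conventions. A safety net is to verify both identities by restriction to $S$: using $K_{\mathcal{F}_0/\mathcal{F}_1}=-2\ell_0$ (Euler sequence, since $c_1(\mathcal{E})=0$) and adjunction, one finds $[S]|_S=2\ell_0|_S$, and combining with $\mathcal{N}_{S/\mathcal{F}_0}=\mathcal{O}(-2)$ forces $\ell_0|_S=\mathcal{O}(-1)$ and $D=-\ell_1$ independently, providing a check on the conventions.
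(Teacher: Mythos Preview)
Your argument is correct in outline but follows a genuinely different route from the paper. You exploit the explicit description $\mathcal{F}_0=\mathbb{P}(\mathcal{O}(1)\oplus\mathcal{O}(-1))$: identify the negative section as the one coming from the maximal-degree subbundle $\mathcal{O}(1)\hookrightarrow\mathcal{E}$, then apply the section-class formula $[S]=\xi-\pi_0^{*}c_1(\mathcal{O}(1))$. The paper never touches the bundle conventions. Its key geometric input is instead that $S$ is \emph{contracted} by the morphism $\mathcal{F}_0\to\mathcal{A}_3\otimes\mathbb{F}_p$, so $\lambda_1|_S=0$, hence $(\ell_0+\ell_1)\cdot S=0$. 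Combining this with $\ell_0^{2}=0$ and $(S-\ell_0)^{2}=0$ (the latter because $S-\ell_0$ is a pullback from the curve), the paper solves for $D$ by pure intersection arithmetic. What this buys the paper is freedom from ever deciding whether $\ell_0$ is literally the tautological $\mathcal{O}(1)$; conversely, your approach, if the identification $\ell_0=\xi$ is nailed down, would yield the equality in $\mathrm{Pic}(\mathcal{F}_0)$, whereas the paper's argument is really only numerical (it concludes $\pi_0^{*}D\cdot S=-\ell_1\cdot S$, i.e.\ $\deg D=-\deg\ell_1$).

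One caution: your safety net is circular as written. The Euler sequence gives $K_{\mathcal{F}_0/\mathcal{F}_1}=-2\xi$ for the tautological class $\xi$; writing this as $-2\ell_0$ already assumes $\ell_0=\xi$, which is precisely the convention you are trying to check. A cleaner patch, in the spirit of the paper, is to observe that $\ell_0^{2}=0$ (already established from $\lambda_1^{2}=2\lambda_2$) forces $\ell_0\equiv\xi$ numerically, since for any class $\xi+\pi_0^{*}D$ of fiber-degree $1$ one has $(\xi+\pi_0^{*}D)^{2}=2\deg(D)$. That is enough for both the numerical identity $[S]\equiv\ell_0-\ell_1$ and the value $S^{2}=-2(p+1)$; your final computation of $S^{2}$ then agrees with the paper's.
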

\begin{proof}
Let $X$ be a fibre of $\pi_0$. We have  $X S=1$ and $(S-\ell_0)X=0$, hence $S-\ell_0=\pi_0^*(D)$ with
$D$ a divisor class on $\mathcal{F}_1$. This gives $(S-\ell_0)^2=0$.
The identity $\lambda_1^2=2\, \lambda_2$ implies $\ell_0^2=0$ and thus $S^2-2\ell_0S=0$.
Now we use the fact that $S$ is contracted under the map 
of $\mathcal{F}_0$ to $\mathcal{A}_3 \otimes {\FF}_p$.
This implies that $\lambda_1$ restricted to $S$ vanishes, hence $(\ell_0+\ell_1)S=0$.
We thus get $S^2=2\ell_0 S =-2\ell_1 S$ and on the other hand $S^2=\ell_0S+\pi_0^*(D)S=-\ell_1S+\pi_0^*(D)S$,
hence $\pi_0^*(D)=-\ell_1$ and $S=\ell_0-\ell_1$. The fact that $\ell_0\ell_1=p+1$ and $\ell_0^2=\ell_1^2=0$ implies 
$S^2=-2(p+1)$.
\end{proof}
\end{section}
\begin{section}{Loci for $g=3$ defined by conditions on the $a$-number}
We now discuss subloci of $S_3$ defined by the inequality $a\geq 2$. Here $a$ indicates the $a$-number
of an abelian variety.
Let $J$ with $\# J=N_3$ be the set of irreducible components of $S_3$ (where we count in the stacky way).
Each irreducible component of $S_3$ is the image under a morphism of a ${\PP}^1$-bundle
$\mathcal{F}_0 \to \mathcal{F}_1$ that blows down a section.
The curve $\mathcal{F}_1$ has $p^3+1$ points rational over ${\FF}_{p^2}$
and $\# \mathcal{F}_0({\FF}_{p^2})=(p^3+1)(p^2+1)$ and each
point of $\mathcal{F}_0({\FF}_{p^2})$ defines a superspecial
abelian variety. Let $\sqcup_{j \in J} \mathcal{F}_0^{j}$ be the
disjoint union of the smooth models of the irreducible components of $S_3$.
Under the natural morphism 
$$
m: \sqcup_{j \in J} \mathcal{F}_0^{j}\longrightarrow S_3 \subset 
\mathcal{A}_3 \otimes{\FF}_p\, .
$$
the $N_3(p^3+1)(p^2+1)$ superspecial points of $\sqcup_{j \in J} \mathcal{F}_0^{j}$
map to $N_3$ superspecial points of $S_3$.
Thus each superspecial point of $S_3$ is the image of
$(p^3+1)(p^2+1)$ points and this multiplicity can be explained as follows. On each surface $\mathcal{F}_0^j$
a section is contracted giving a factor $p^3+1$, while
the image of an ${\FF}_{p^2}$-rational fibre of $\mathcal{F}_0^j \to \mathcal{F}_1^j$ lies
on the image of 
$p^2+1$ surfaces $\mathcal{F}_0^j$. This can be checked by using Ekedahl-Oort strata and their classes 
as follows.  

Each ${\FF}_{p^2}$-rational point of $\mathcal{F}_1^j$ determines a fibre in the ${\PP}^1$-bundle 
$\mathcal{F}_0^j\to \mathcal{F}_1^j$ and the image under $m$ of such a fibre
provides a component of the Ekedahl-Oort locus $\mathcal{V}_{[3,2]}$.
This locus  $\mathcal{V}_{[3,2]}$ consists of a finite union of ${\PP}^1$s.
By \cite{E-vdG} we know the class of this locus:
$$
[\overline{\mathcal{V}}_{[3,2]}]=(p-1)^2(p^6-1) \, \lambda_2\lambda_3\, .
$$
Since the degree of the determinant $\lambda_1$ of the Hodge bundle
restricted to such a ${\PP}^1$ is $p-1$, we find that $\overline{\mathcal{V}}_{[3,2]}$
has
$$
m_{3,2}=\frac{\deg( [\overline{\mathcal{V}}_{[3,2]}]\, \lambda_1)}{p-1}=
(p-1)(p^6-1) \, v(3)
$$
irreducible components, each a copy of ${\PP}^1$. 
Here we count in the stacky sense.
Each such component contributes $p^2+1$ superspecial points and we see from
$$
m_{3,2} \, (p^2+1) = \deg(\overline{\mathcal{V}}_{[3,2,1]}) \, (p^3+1)
$$
that this fits with the fact that through a superspecial point there
pass $p^3+1$ components of $\overline{\mathcal{V}}_{[3,2]}$.
In fact, under the map $\mathcal{F}_0^j \to \mathcal{A}_3\otimes {\FF}_p$ a section is blown down
and this section intersects the $p^3+1$ fibres of $\mathcal{F}_0^j\to \mathcal{F}_1^j$ over
$\mathcal{F}_1^j({\FF}_{p^2})$.

We can also check that each such fibre lies on
$p^2+1$ irreducible components of $S_3$; hence we find for the number of
superspecial points
$$
N_3 \,  (p^3+1)(p^2+1) =
\deg({\overline{\mathcal{V}}_{[3,2,1]}}) (p^2+1)(p^3+1)\,
$$
in agreement with the fact that $\mathcal{V}_{[3,2,1]}$
is the superspecial locus and that
$N_g$ equals the degree of the superspecial locus for odd $g$.
\end{section}

\begin{section}{Moduli of flag type quotients for $g=4$} \label{g=4flagtypes}
In this section we construct a smooth model for each irreducible
component of the supersingular locus $S_4$. 
The model is obtained by taking 
the Zariski closure of the moduli of rigid 
flag type quotients for $g=4$ and by showing that this
moduli space is smooth. 

We consider the space $\mathcal{M}=\mathcal{M}_{\eta}$
of polarized flags of contravariant Dieudonn\'e
modules
$$
M_0 \subset M_1 \subset M_2 \subset M_3
$$
satisfying
\begin{enumerate}
\item{} $M_3=A_{1,1}^4$ provided with $\eta$, a fixed quasi-polarization $\langle \, , \, \rangle$ that induces an identification $M_3^t= F^3M_3$;
\item{} $(F,V)M_i \subset M_{i-1}$ and $\dim (M_i/M_{i-1})=i$;
\item{} $(F,V)^iM_i\subset M_i^t$.
\end{enumerate}
We say that it is {\sl rigid} if $M_i=M_0+F^{3-i}M_3$ for $i=0,\ldots,3$.

\begin{theorem}\label{thmZariskiclosure}
The Zariski closure $\mathcal{F}_0$ 
of the moduli space of rigid polarized Dieudonn\'e flags
of length $4$ with given quasi-polarization on $M_3$ inside
$\mathcal{M}$  is non-singular.
\end{theorem}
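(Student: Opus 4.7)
The plan is to realize $\mathcal{F}_0$ as the total space of an iterated smooth fibration
$$\mathcal{F}_0 \langepijl{\pi_0} \mathcal{F}_1 \langepijl{\pi_1} \mathcal{F}_2 \to \mathcal{F}_3=\text{point},$$
in which $\mathcal{F}_i$ parametrizes the partial polarized Dieudonn\'e flag $M_i \subset M_{i+1} \subset \cdots \subset M_3 = A_{1,1}^4$ satisfying the constraints (1)--(3) of Section \ref{DMD}. Since smoothness is preserved under composition of smooth morphisms, it suffices to check that $\mathcal{F}_2$ is smooth and that each of $\pi_1$ and $\pi_0$ is a smooth morphism.

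First I would describe $\mathcal{F}_2$ explicitly, in the spirit of the $g=3$ computation in Section \ref{FTQ}. With $M_3 = A\langle x_1,x_2,x_3,x_4\rangle$ and $\langle x_i, Fx_j\rangle = \delta_{ij}/p$, the submodule $M_2$ is determined by the line $M_2/FM_3$ in $M_3/FM_3 \cong k^4$, spanned by $v=\sum a_i x_i$. The polarization condition $(F,V)^2 M_2 \subset M_2^t$ unwinds (in its $j=0$ case) to $\langle v,Fv\rangle \in W$, which is the Fermat equation
$$a_1^{p+1}+a_2^{p+1}+a_3^{p+1}+a_4^{p+1}=0$$
in $\mathbb{P}^3$. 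This Fermat surface is smooth in characteristic $p$, since its Jacobian $(a_1^p,a_2^p,a_3^p,a_4^p)$ has no projective common zero.

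Next, I would carry out the analogous analysis for $\pi_1$ and $\pi_0$. Over a point of $\mathcal{F}_2$, the fibre of $\pi_1$ parametrizes $M_1$ with $(F,V)M_2 \subset M_1 \subset M_2$ subject to $(F,V)M_1 \subset M_1^t$. Quotienting by $(F,V)M_2$ and using rigidity to fix the sub-line $F^2M_3/(F,V)M_2$, the remaining freedom lives in a Fermat-type subvariety of a Grassmannian, again smooth by the same Jacobian argument. The morphism $\pi_0$ is similar and should acquire the structure of a $\mathbb{P}^1$-bundle of the form $\mathbb{P}(\mathcal{L}_+ \oplus \mathcal{L}_-)$ over $\mathcal{F}_1$, in analogy with the $g=3$ case of Section \ref{FTQ}. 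Composing then yields $\dim \mathcal{F}_0 = 2+1+1 = 4 = \lfloor g^2/4 \rfloor$ for $g=4$, matching the Li--Oort dimension for $S_4$.

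The main obstacle is smoothness at the boundary of the rigid locus inside $\mathcal{M}$. On the open rigid part smoothness is already given by \cite[7.6]{L-O}; the theorem asserts that the Zariski closure introduces no singularities, i.e.\ the $g=4$ analogues of the negative section contracted in Section \ref{FTQ} for $g=3$ are smooth points of $\mathcal{F}_0$. The plan is to identify this closure with the explicit tower above and to verify, by writing local display coordinates on $\mathcal{M}$ near such a non-rigid boundary point, that the defining equations of each $\mathcal{F}_i$ continue to cut out a transverse subscheme of the expected dimension. This local transversality check, uniform in $\mathcal{F}_{i+1}$, is the delicate technical step.
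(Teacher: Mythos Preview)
Your proposal has two concrete errors, one of which is fatal.

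First, the equations you write for $\mathcal{F}_2$ are wrong. For $g=4$ the quasi-polarization satisfies $M_3^t=F^3M_3$, and with a skeleton $x_1,\ldots,x_4$ satisfying $(F-V)x_i=0$ the pairing is normalized by $\langle x_1,F^4x_4\rangle=\langle x_2,F^4x_3\rangle=1$, not by $\langle x_i,Fx_j\rangle=\delta_{ij}/p$. The polarization condition on $M_2$ is $\langle v_0,F^2v_0\rangle\in W$ (not $\langle v_0,Fv_0\rangle$), and the resulting surface in $\PP^3$ is
\[
a_1a_4^{p^2}-a_1^{p^2}a_4+a_2a_3^{p^2}-a_2^{p^2}a_3=0,
\]
a smooth surface of degree $p^2+1$, not the degree $p+1$ Fermat surface. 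You transported the $g=3$ formulas without adjusting the exponent of $F$.

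Second, and this is the essential gap: the morphism $\pi_1:\mathcal{F}_1\to\mathcal{F}_2$ is \emph{not} smooth, so your strategy of composing smooth morphisms cannot work. In fact $\pi_1$ has inseparability degree $p$, and the reduced fibre over a non-${\FF}_{p^2}$-rational point of $\mathcal{F}_2$ is a plane curve of the form $a_8^p+a_2a_7^p-a_5^{p-1}(a_8+a_2^pa_7)=0$, which has a cusp of order $p$ at $a_5=0$. Over an ${\FF}_{p^2}$-rational point the fibre degenerates further to a union of $p$ lines through a point. So $\pi_1$ is nowhere a smooth morphism, and the ``Fermat-type subvariety of a Grassmannian, again smooth by the same Jacobian argument'' that you anticipate does not materialize.

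The paper's proof avoids this by not attempting to factor through smooth steps. Instead it writes down all the defining equations $f,g_1,g_2,g_3$ simultaneously (the $g_i$ obtained by stripping a spurious factor $a_5$ from the naive polarization conditions on $M_1$) and computes the $4\times 7$ Jacobian matrix directly, checking rank $4$ in the two regimes $a_5\neq 0$ and $a_5=0$. The point is that although each fibre of $\pi_1$ is singular, the singularities move with the base in such a way that the total space $\mathcal{F}_1$ (and then $\mathcal{F}_0$) is nonsingular; this is exactly what a global Jacobian computation detects and a fibrewise argument misses.
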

\begin{proof}
By \cite[6.1]{K-O2} we can choose generating elements $x_1,x_2,x_3,x_4$ of $M_3$
in the skeleton $\tilde{M}_3=\{ m \in M_3: (F-V)m=0\}$ of $M_3$ such that
the pairing defined by $\eta$ 
satisfies
$$
\langle x_i, F^4x_j\rangle = \delta_{i,5-j} \qquad \text{\rm and} \qquad
 \langle x_i,F^3x_j\rangle=0\, 
$$
for $1\leq i\leq j \leq 4$.
For a rigid polarized Dieudonn\'e flag $M$ the module $M_2$ is generated by
$FM_3$ and a vector 
$$
v_0=\sum_{i=1}^4 a_ix_i  \, \in M_3/FM_3 \label{eq:v_0}
$$
with the condition $\langle v_0,F^2v_0\rangle \in W$. Viewing the
coefficients $a_i$ as lying in $k$, this amounts to the equation
$$
f:= a_1a_4^{p^2}-a_1^{p^2}a_4+a_2a_3^{p^2}-a_2^{p^2}a_3=0\, .
$$
This defines a smooth surface $\mathcal{F}_2$ in ${\PP}^3$. This
surface was studied in detail by Katsura \cite{K2017}. 
Locally on this surface we may assume without loss of generality 
that $a_1\neq 0$ and that
$a_1=1$. Now $M_1$ is generated by $FM_2$ and a $2$-dimensional 
subspace $M_1/FM_2$ in $M_2/FM_2$. Since $a_1=1$ we can assume that
this $2$-dimensional subspace is generated by non-zero elements $v$ and
$w$ with
$$
v=a_5v_0+a_6Fx_2+a_7Fx_3+a_8Fx_4, \quad 
w=a_9Fx_2+a_{10}Fx_3+a_{11}Fx_4 \, . \eqno(6)
$$
We then have the conditions
$$
\langle v,Fv\rangle \in W,  \quad
\langle v,Fw\rangle \in W,  \quad
\langle Fv,w\rangle \in W\, . \eqno(7)
$$
Viewing the coefficients as elements of $k$ we find three equations 
all divisible by $a_5$. But $a_5=0$ yields a flag that is not rigid; indeed,
$$
M_1+FM_3=(F,V)M_2+Aw+FM_3=(F,V)(Av+FM_3)+Aw+FM_3 \subset FM_3
$$
but $M_2 \not\subset FM_3$, hence $M_2\neq M_1+FM_3$, contradicting rigidity.
Removing the factor $a_5$ from the equations (7) by considering
$\langle v,Fv\rangle/a_5, \langle v,Fw\rangle/a_5$ and
$\langle Fv,w\rangle/a_5^p$, we get the equations
$$
\begin{aligned}
g_1:=& a_1a_8^p-a_1^pa_5^{p-1}a_8+a_2a_7^p-a_2^pa_5^{p-1}a_7 +a_3^pa_5^{p-1}a_6-a_3a_6^p=0, \\
g_2:=& a_1a_{11}^p+a_2a_{10}^p-a_3a_9^p=0\, ,  \\
g_3:=& a_1^pa_{11}+a_2^pa_{10}-a_3^pa_{9}=0\, .\\
\end{aligned} \eqno(8)
$$

\begin{remark}
The reader may verify that if the point $(1:a_2:a_3:a_4) \in \mathcal{F}_2(k)$
is not rational over ${\FF}_{p^2}$ then we may choose as $w$
the element 
$$
(F-V)\, v_0 \, .
$$
Indeed, it satisfies $g_2=0$ and $g_3=0$ 
for any non-zero choice of $v$; namely
with $a_1=1$ we have $a_1(a_4^{p^2}-a_4)+a_2(a_3^{p^2}-a_3)-a_3(a_2^{p^2}-a_2)=0$
and similarly for $g_3$. 
\end{remark}
Now we first look at a point with $a_5\neq 0$. If both $a_9$ and $a_{10}$ 
vanish we have by $g_3=0$ that $w=0$. So we may assume that, say, 
$a_9\neq 0$
and then have $a_1=a_5=a_9=1$ and by changing $v$ to $v-a_6w$ we may assume $a_6=0$. 
The Jacobian matrix of the
equations $f,g_1,g_2,g_3$ with respect to the variables $a_j$ for
$j=2,3,4,7,8,10,11$ is
$$
\left( \begin{matrix}
a_3^{p^2} & -a_2^{p^2} & -a_1^{p^2} & 0 & 0 & 0 & 0\\
a_7^p & -a_6^p & 0 & -a_2^pa_5^{p-1} & -a_1^pa_5^{p-1} & 0 & 0 \\
a_{10}^p & -a_9^p & 0 & 0 & 0 & 0 & 0 \\
0 & 0 & 0 & 0 & 0 &a_2^p & a_1^p \\
\end{matrix}
\right)
$$
and this is of rank $4$. 

Next we look at the case where $a_5=0$. The vanishing of $a_9$ and $a_{10}$
implies by $g_3$ that $a_{11}=0$, so we may assume that $a_9\neq 0$ or 
$a_{10}\neq 0$. Again without loss of generality we may assume $a_9\neq 0$.
Changing $v$ by a multiple of $w$ we may assume $a_6=0$. If now $a_7=0$
then $g_1$ forces $a_8=0$, hence $v=0$. So we may assume that $a_7\neq 0$.
Then it suffices to treat the case of $a_1=a_7=a_9=1$ and $a_6=0$. Then the Jacobian matrix of the
equations $f,g_1,g_2,g_3$ with respect to the variables $a_j$ for
$j=2,3,4,5,8,10,11$ is
$$
\left( \begin{matrix}
a_3^{p^2} & - a_2^{p^2} & -a_1^{p^2} & 0  & 0 & 0 & 0 \\
a_7^p & 0 & 0 & (a_1^pa_8+a_2^pa_7)a_5^{p-2} & -a_1^pa_5^{p-1} & 0 & 0 \\
a_{10}^p & -a_9^p & 0 & 0 & 0 & 0 & 0 \\
0 & 0 & 0 & 0 & 0 & a_2^p & a_1^p \\
\end{matrix}
\right)
$$
which is of rank $4$ as required. This shows that
$\mathcal{F}_1$ is non-singular. Since $\mathcal{F}_0$ is a ${\PP}^1$-bundle
over $\mathcal{F}_1$ the result follows.
\end{proof}

By writing $\mathcal{F}_i$ for the Zariski closure in $\mathcal{M}$ 
of the moduli space
of rigid polarized Dieudonn\'e flags $M_i \subset \cdots \subset M_3$ we get
a sequence 
$$
\mathcal{F}_0 \langepijl{\pi_0} \mathcal{F}_1 \langepijl{\pi_1} \mathcal{F}_2 
\langepijl{\pi_2} \mathcal{F}_3={\rm point}
$$
with $\dim \mathcal{F}_i=4-i$ for $i=0,1,2$.

We now describe the fibres of the morphism $\pi_1 : \mathcal{F}_1 \to \mathcal{F}_2$.
We start by remarking that by using the symmetry of $\mathcal{F}_2$ 
there is no loss of generality if we look at the fibre
of a point $(a_1:a_2:a_3:a_4)$ of $\mathcal{F}_2$
with $a_1=1$. If one of $a_2,a_3,a_4$ lies in ${\FF}_{p^2}$
then the point lies on one of the lines of $\mathcal{F}_2$. Indeed, if $a_4
\in {\FF}_{p^2}$ then such a line is parametrically $(1:t:t:a_4)$, 
while if, say, $a_2 \in
{\FF}_{p^2}$ then such a line is $(t:a_2: 1: t)$. 

For describing the fibre 
over a point $(1:a_2:a_3:a_4)$ we consider the equations
$$
a_8^p+a_2a_7^p-a_3a_6^p -a_5^{p-1}(a_8+a_2^pa_7-a_3^pa_6)=0\, , 
\eqno(9)
$$
and
$$
a_{11}^p+a_2a_{10}^p-a_3a_9^p=0, \quad a_{11}+a_2^pa_{10}-a_3^pa_9=0\, . \eqno(10)
$$
By the two equations $g_2$, $g_3$ of (10) 
we eliminate $a_{11}$ and get 
$$
\frac{a_{10}^p}{a_9^p}= \frac{a_3^{p^2}-a_3}{a_2^{p^2}-a_2}\, .
\eqno(11)
$$
In the neighborhood of an ${\FF}_{p^2}$-valued point
of $\mathcal{F}_2$, say $(1:a_2:a_3:a_4)$, the expressions  $a_2-a_2^{p^2}$ and $a_3-a_3^{p^2}$
are local coordinates. This shows that the function field of
$\mathcal{F}_1$ can be generated over the function field of
$\mathcal{F}_2$ by adjoining the $p$th root of 
$(a_2-a_2^{p^2})/(a_3-a_3^{p^2})$ as determined by (11) and 
then adjoining a further element via an Artin-Schreier equation  (9).
Hence the degree of inseparability of $\mathcal{F}_1$ over
$\mathcal{F}_2$ is $p$.

Over an open neighborhood  $U$ of a
${\FF}_{p^2}$-rational point with 
local coordinates  $a_2-a_2^{p^2}$ and $a_3-a_3^{p^2}$,
the equation (11) describes
an inseparable cover of the blow-up of $U$ 
(in $U\times {\PP}^1$ with coordinates $(u:v)$ on ${\PP}^1$) given by
$$
u(a_2-a_2^{p^2})-v(a_3-a_3^{p^2})=0, \quad u/v=(a_{10}/a_9)^p\, .
$$
Thus we see that the morphism 
$\pi_1: \mathcal{F}_1 \to \mathcal{F}_2$
factors via an inseparable cover of the blow-up 
$\tilde{\mathcal{F}}_2$ of $\mathcal{F}_2$ in the ${\FF}_{p^2}$-rational points.

If we have a point not on a line we may assume $a_9=1$ and then that $a_6=0$.  The reduced
fibre is a curve in ${\PP}^2$ with coordinates $(a_5,a_7,a_8)$  given by 
$$
a_8^p+a_2a_7^p -a_5^{p-1}(a_8+a_2^pa_7)=0\, .
$$ 
This is a curve with one singularity of order $p-1$, a cusp located 
at $a_5=0$ and $a_8+a_2^{1/p}a_7=0$. 

Next we consider the case of a point on a line. Since the automorphism group
of $\mathcal{F}_2$ acts transitively on the set of lines defined over ${\FF}_{p^2}$ 
(by Witt's theorem, see \cite{K2017}) we may assume that the
line is given as $(1:t:0:0)$. The last two equations give
$(t^{p^2}-t)a_{10}^p=0$ and the first equation yields
$a_8^p+ta_7^p-a_5^{p-1}(a_8+t^pa_7)=0$, again a curve with a cusp.
So if the point is not a ${\FF}_{p^2}$-valued point of $\mathcal{F}_2$
we get $a_{10}^p=0$ and
as reduced fibre again a curve with a 
single singularity, a cusp.
If $t\in {\FF}_{p^2}$, then the first equation splits as the union of
$p$ lines passing through one point. 

We summarize.

\begin{proposition} Let $\tilde{\mathcal{F}}_2$ be the blow-up of $\mathcal{F}_2$
in all ${\FF}_{p^2}$-rational points. 
The morphism $\pi_1: \mathcal{F}_1 \to \mathcal{F}_2$ factors through
$ \mathcal{F}_1 \to \tilde{\mathcal{F}}_2 \to \mathcal{F}_2$. The
morphism $\pi_1': \mathcal{F}_1 \to \tilde{\mathcal{F}}_2$ has inseparability 
degree $p$. The reduced fibre over a 
non-${\FF}_{p^2}$-rational point is an irreducible curve with one singularity,
a cusp singularity of order $p-1$, while the fibre over a point on an 
exceptional curve is a union of $p$ lines
meeting in one point.
\end{proposition}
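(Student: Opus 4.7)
The plan is to deduce each clause of the proposition from the explicit local equations (9), (10), (11) assembled in the preceding discussion, working in an affine chart of $\mathcal{F}_2$ where $a_1=1$ and $(a_2,a_3,a_4)$ are local coordinates subject to $f=0$. A rigid polarized Dieudonn\'e flag above such a point is described by the pair $(v,w)$ of (6), with the polarization conditions (7) cleared of their $a_5$ factors to yield (9) and (10) as the local defining equations of $\mathcal{F}_1$ over $\mathcal{F}_2$.

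For the factorization through $\tilde{\mathcal{F}}_2$, I first eliminate $a_{11}$ from the pair (10) to arrive at the relation (11), namely $(a_{10}/a_9)^p=(a_3^{p^2}-a_3)/(a_2^{p^2}-a_2)$. The $\FF_{p^2}$-rational points of $\mathcal{F}_2$ are exactly the locus where both sides vanish simultaneously, and at such a point $a_2-a_2^{p^2}$ and $a_3-a_3^{p^2}$ form a system of local parameters. The blow-up $\tilde{\mathcal{F}}_2$ is therefore cut out locally by $\xi(a_2-a_2^{p^2})=\eta(a_3-a_3^{p^2})$ in $\mathcal{F}_2\times\PP^1_{(\xi:\eta)}$, and (11) shows that the assignment $(\xi:\eta)=(a_{10}^p:a_9^p)$ defines a lift $\pi_1':\mathcal{F}_1\to\tilde{\mathcal{F}}_2$ extending across the exceptional divisor. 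Over the function field of $\tilde{\mathcal{F}}_2$ the coordinate $a_{10}/a_9$ is a $p$-th root of the fixed nonzero element supplied by (11), giving a purely inseparable extension of degree $p$; the remaining fibre equation (9) is Artin--Schreier in $a_8$ after normalizing $a_6=0$ and rescaling, hence separable, so $\pi_1'$ has inseparability degree exactly $p$.

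The reduced-fibre analysis splits into two regimes. Over a non-$\FF_{p^2}$-rational point of $\mathcal{F}_2$, (11) pins down $(a_9:a_{10}:a_{11})$ set-theoretically, and the reduced fibre in $\PP^2_{(a_5:a_7:a_8)}$ is the Artin--Schreier curve (9), whose Jacobian reveals a unique singular point at $a_5=0$, $a_8+a_2^{1/p}a_7=0$, a cusp of order $p$; the line case $(1:t:0:0)$ with $t\notin\FF_{p^2}$ is handled identically after the substitution $Z=a_8+t^p a_7$, which rewrites (9) as $Z^p-a_5^{p-1}Z+(t-t^{p^2})a_7^p=0$ with $t-t^{p^2}\ne 0$. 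Over a point on the exceptional divisor above an $\FF_{p^2}$-rational point of $\mathcal{F}_2$ one may assume the relevant coefficient $t$ satisfies $t^{p^2}=t$, so the same substitution collapses (9) to $Z^p-a_5^{p-1}Z=0$, factoring as $\prod_{c\in\FF_p}(Z-ca_5)$ and exhibiting the reduced fibre as a pencil of $p$ lines through the common point $a_5=Z=0$.

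The main obstacle lies in the sharpness of the two fibre descriptions: showing that the singularity of the non-rational fibre is a cusp of order exactly $p$ (and unique), and that the factorization of the exceptional fibre consists of $p$ distinct concurrent lines rather than a non-reduced or otherwise degenerate configuration. Both come down to the structure of the additive polynomial $X^p-\beta X$ over an algebraically closed field, whose root set is an $\FF_p$-line when $\beta\ne 0$, combined with a direct Jacobian computation to confirm uniqueness of the singular point. The residual technical checks --- that $\pi_1'$ extends to a morphism rather than a rational map on all of $\mathcal{F}_1$, and that the non-rational fibre is geometrically integral --- follow from the explicit local description of $\tilde{\mathcal{F}}_2$ and the irreducibility of a generic Artin--Schreier curve over $k$.
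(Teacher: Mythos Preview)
Your proposal is correct and follows essentially the same route as the paper: you work in the affine chart $a_1=1$, use equations (9)--(11) to exhibit the factorization through the blow-up and the inseparability degree~$p$, and then read off the fibre structure from the Artin--Schreier equation (9). The only visible difference is cosmetic: where the paper simply asserts that for $t\in\FF_{p^2}$ ``the first equation splits as the union of $p$ lines passing through one point,'' you make this explicit via the substitution $Z=a_8+t^p a_7$ and the factorization $Z^p-a_5^{p-1}Z=\prod_{c\in\FF_p}(Z-ca_5)$, which is a helpful clarification but not a different idea.
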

\end{section}
\begin{section}{Interpretation of the
morphism $\mathcal{F}_1 \to \mathcal{F}_2$}
The morphism $\pi_1: \mathcal{F}_1 \to \mathcal{F}_2$ is
inseparable and factors through the blown-up surface $\tilde{\mathcal{F}}_2$.
We give an interpretation of this factorization
by describing the blow-up
$\tilde{\mathcal{F}}_2$ in terms of Dieudonn\'e modules and by showing
that $\mathcal{F}_1$ is realized in a natural ${\PP}^2$-bundle
over $\tilde{\mathcal{F}}_2$.

 We begin with
a moduli interpretation of the fibers of 
$\tilde{\mathcal{F}}_2\to \mathcal{F}_2$.
\begin{proposition}\label{FiberOfBlowUp}
The fiber
of $\tilde{\mathcal{F}}_2 \to \mathcal{F}_2$ over a point 
$(M_2\subset M_3) \in {\mathcal{F}}_2(k)$ is given by a set
of lines in a $2$-dimensional vector space 
$$
\{L \subset V^{-1}M_2^t/FM_2 \mid \dim L = 1,\ 
\text{ $L$ contains $(F,V)M_2 \bmod FM_2$} \}.
$$
\end{proposition}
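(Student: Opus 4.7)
The plan is to combine the explicit description of $\pi_1 : \mathcal{F}_1 \to \mathcal{F}_2$ from Section \ref{g=4flagtypes} with a direct computation, in the skeleton basis, of the two sides of the claimed bijection. Work on the affine chart $a_1 = 1$ of $\mathcal{F}_2$ and write $v_0 = x_1 + a_2 x_2 + a_3 x_3 + a_4 x_4$, so that $M_2 = \langle v_0\rangle_A + FM_3$. A point of the fiber of $\pi_1$ is recorded (up to the $\mathrm{GL}_2$-action on $\langle v, w\rangle$) by $w = a_9 F x_2 + a_{10} F x_3 + a_{11} F x_4 \in M_1/FM_2$ subject to the equations $g_2 = g_3 = 0$ of (8), and the factorization through $\tilde{\mathcal{F}}_2$ identifies the fiber of $\tilde{\mathcal{F}}_2 \to \mathcal{F}_2$ with the reduced locus of $(a_9 : a_{10} : a_{11})$ solving these two equations, so it suffices to describe this locus intrinsically.

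First I compute $V^{-1} M_2^t / FM_2$ in the skeleton basis. Condition (iii) with $i = 2$ gives $(F, V)^2 M_2 \subset M_2^t$ and in particular $FVM_2 = pM_2 \subset M_2^t$, so $FM_2 \subset V^{-1} M_2^t$. Using the polarization $\langle x_1, F^4 x_4\rangle = \langle x_2, F^4 x_3\rangle = 1$ together with the compatibility $\langle Fm, n\rangle^{\sigma} = \langle m, Vn\rangle$, a direct lattice computation shows that $V^{-1} M_2^t / FM_2$ is $2$-dimensional over $k$ and that, for $w \in FM_3$ written in the chosen basis, the condition $w \in V^{-1} M_2^t$ is exactly the pair of linear equations $g_2 = g_3 = 0$. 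Thus the lines $L \subset V^{-1} M_2^t / FM_2$ are in bijection with directions $(a_9 : a_{10} : a_{11})$ solving these equations, which is precisely the locus described in Section \ref{g=4flagtypes}.

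Next I identify the image of $(F, V) M_2$ in $V^{-1} M_2^t / FM_2$. Since $M_2$ is generated by $v_0$ together with $FM_3$ and $V F M_3 = p M_3 \subset F^2 M_3 \subset FM_2$, the image is spanned by $V v_0$ modulo $FM_2$; subtracting $F v_0 \in FM_2$ yields
\[
(V - F)\, v_0 \equiv \sum_{i=1}^{4} (a_i^{1/p} - a_i^p)\, F x_i \pmod{FM_2},
\]
which vanishes precisely when every $a_i$ lies in $\FF_{p^2}$. At a non-$\FF_{p^2}$-rational point this element is nonzero, defines a $1$-dimensional line that coincides with the span of the canonical choice $w = (F - V) v_0$ from the remark in Section \ref{g=4flagtypes}, and forces $L$ to be $\langle (V - F) v_0\rangle$---matching the single-point fiber of $\tilde{\mathcal{F}}_2 \to \mathcal{F}_2$ away from $\FF_{p^2}$-rational points. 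At an $\FF_{p^2}$-rational point the image vanishes, every line in the $2$-dimensional space qualifies, and one identifies the resulting $\PP^1$ with the exceptional divisor of the blow-up by sending a tangent direction $\tau$ to the limit along any path in direction $\tau$ of the line $\langle (V - F) v_0\rangle$; by the formula above, the leading terms of $a_i^{1/p} - a_i^p$ are linear in $\tau$, so this limit is well-defined and varies linearly.

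The main obstacle will be the lattice computation of step one---matching the equations $g_2 = g_3 = 0$ derived from the explicit pairing on $M_3$ to the intrinsic condition $V w \in M_2^t$ requires careful bookkeeping of the Frobenius and Verschiebung twists in the pairing and of the factors of $p$ coming from $F^4 = p^2$. Once that identification is in place, the remaining assertions follow from the description of the fiber of $\pi_1' : \mathcal{F}_1 \to \tilde{\mathcal{F}}_2$ established in Section \ref{g=4flagtypes}.
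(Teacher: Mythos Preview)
Your approach is correct in outline but takes a genuinely different route from the paper. The paper's proof is entirely intrinsic and very short: it first shows $(F,V)M_2 \subset V^{-1}M_2^t$ directly from $V(F,V)M_2 \subset (F,V)^2M_2 \subset M_2^t$, and then computes $\dim_k V^{-1}M_2^t/FM_2 = 2$ by a pure dimension count using the chain $M_2^t \subset FM_2 \subset V^{-1}M_2^t$ together with $V^{-1}M_3^t = F^2M_3 \subset FM_2$. The dichotomy between a single line and a full $\PP^1$ then comes from the observation that $FM_2 = VM_2$ exactly at $\FF_{p^2}$-rational points, so $(F,V)M_2/FM_2$ has dimension $1$ or $0$ accordingly. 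No coordinates are used, and the identification with the blow-up is left at the level of matching fiber types. Your argument instead works entirely in the skeleton basis, identifies the constraint $w \in V^{-1}M_2^t$ with the explicit equations $g_2=g_3=0$ from Section~\ref{g=4flagtypes}, and ties the whole thing back to the concrete blow-up description already obtained there. This buys a more explicit link between the moduli-theoretic description and the equations, at the cost of the pairing bookkeeping you flag as the main obstacle.

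One point to correct: your final claim that ``the leading terms of $a_i^{1/p}-a_i^p$ are linear in $\tau$'' is not right. In characteristic $p$ the $p$-th root is additive, so perturbing $a_i\mapsto a_i+\epsilon$ with $a_i\in\FF_{p^2}$ gives leading term $\epsilon^{1/p}$, not something linear; and indeed the paper (in the Lemma following the Proposition) shows the identification between the $(a_9:a_{10})$-line and the exceptional $\PP^1$ is the Frobenius-twisted relation $u/v=(a_{10}/a_9)^p$ from equation~(11), passing through the inseparable cover $\tilde{\mathcal F}_2'\to\tilde{\mathcal F}_2$. This does not damage your conclusion---over a fixed $k$-point the fiber of that inseparable cover is still a reduced $\PP^1$, so your set of lines and the exceptional divisor are in bijection---but the map is Frobenius, not linear, and you should say so.
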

\begin{proof}
We begin by observing two facts:
\begin{enumerate}
\item[(i)] $(FM_2 \subset)\ (F,V)M_2 \subset V^{-1}M_2^t$
\item[(ii)] $V^{-1}M_2^t/FM_2$ is a $k$-vector space of dimension two.
\end{enumerate}
Indeed, (i) follows from $V(F,V) M_2 \subset (F,V)^2 M_2 \subset M_2^t$.
To prove (ii), consider  the dual of $FM_3 \subset M_2 \subset M_3$:
\[
M_3^t \subset M_2^t \subset V^{-1}M_3^t.
\]
By $V^{-1}M_3^t = F^{-1}M_3^t = F^2M_3 \subset FM_2$, we have $M_2^t \subset FM_2$.
This means that $V$ (and therefore $p$) kills  $V^{-1}M_2^t/FM_2$,
whence $V^{-1}M_2^t/FM_2$ is a $k$-vector space.
Looking at the inclusions $M_2^t \subset FM_2 \subset V^{-1}M_2^t$, we have
$$
\begin{aligned}
\dim V^{-1}M_2^t/FM_2 &= \dim V^{-1}M_2^t/M_2^t - \dim FM_2/M_2^t\\
&= 4- \dim FM_2/F^2M_3 - \dim V^{-1}M_3^t/M_2^t\\
&= 4-1-1 = 2 \\
\end{aligned}
$$
and this proves ii).
If $(M_2 \subset M_3)$ represents a point of $\mathcal{F}_2$ that is not
rational over ${\FF}_{p^2}$ then $FM_2\neq VM_2$ and $L$ is unique.
If $(M_2 \subset M_3)$ represents a ${\FF}_{p^2}$-rational point, then
$FM_2=VM_2$ and the fibre is a ${\PP}^1$. 
\end{proof}

\begin{remark}
We point out that the Dieudonn\'e module $V^{-1}M_2^t/FM_2$ is self-dual.
\end{remark}

We now describe the morphism $\pi_1' : \mathcal{F}_1 \to \tilde{\mathcal{F}}_2$.
On $\tilde{\mathcal{F}}_2$ we have by Proposition \ref{FiberOfBlowUp} 
the subspace $L \subset V^{-1}M_2^t/FM_2$. It determines a
$W$-module $\tilde{L}$ with
$$
(F,V) M_2 \subset \tilde{L} \subset V^{-1}M_2^t\, ,
$$
the inverse image of $L$ under the projection $V^{-1}M_2^t \to V^{-1}M_2^t/FM_2$.
It has the property that outside $\tilde{\pi}_2^{-1}(\mathcal{F}_2({\FF}_{p^2}))$ we have
$\tilde{L}=(F,V)M_2$, where we write $\tilde{\pi}_2$ for the blow-down morphism
$\tilde{\mathcal{F}}_2 \to \mathcal{F}_2$.
We can now consider over a point of $\tilde{\mathcal{F}}_2$ 
the $3$-dimensional vector space
$ M_2/\tilde{L}$. This should define
a rank $3$ vector bundle $B$, but as the equations show 
we can realize $B$ only after an inseparable base change.

\begin{lemma}
The threefold $\mathcal{F}_1$ is a divisor in a ${\PP}^2$-bundle ${\PP}(B)$
with $B$ the rank $3$ vector bundle defined by $M_2/\tilde{L}$
over a surface $\tilde{\mathcal{F}}_2^{\prime}$ obtained by an inseparable base change 
$\tilde{\mathcal{F}}_2^{\prime} \to\tilde{\mathcal{F}}_2$
of degree $p$.
\end{lemma}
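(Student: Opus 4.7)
The plan is to realise $\mathcal{F}_1$ as a divisor in a projective bundle built from the universal Dieudonn\'e flag, after passing to an inseparable degree-$p$ cover of $\tilde{\mathcal{F}}_2$ that trivialises the line $L$ of Proposition \ref{FiberOfBlowUp}. This will proceed in three steps: construction of $\tilde{\mathcal{F}}_2'$, construction of the rank $3$ bundle $B$ and a morphism $\mathcal{F}_1 \times_{\tilde{\mathcal{F}}_2} \tilde{\mathcal{F}}_2' \to \PP(B)$, and identification of the image as the zero locus of equation (9).

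First, I would construct $\tilde{\mathcal{F}}_2'$ explicitly. On $\tilde{\mathcal{F}}_2$, the equations (10) together with the ratio identity (11) determine only the Frobenius twist $(a_9^p:a_{10}^p:a_{11}^p)$ of a point of $\PP^2$, and not the projective triple $(a_9:a_{10}:a_{11})$ itself. I would therefore take $\tilde{\mathcal{F}}_2' \to \tilde{\mathcal{F}}_2$ to be the cover over which $(a_9:a_{10}:a_{11})$ is defined, namely the normalisation of $\tilde{\mathcal{F}}_2$ in the function field obtained by adjoining the $p$-th root of the rational function $(a_3-a_3^{p^2})/(a_2-a_2^{p^2})$. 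By construction this is purely inseparable of degree $p$ and recovers the factorisation $\mathcal{F}_1 \to \tilde{\mathcal{F}}_2' \to \tilde{\mathcal{F}}_2$ already observed after equation (11).

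Over $\tilde{\mathcal{F}}_2'$ the vector $w$ of equation (6) is well-defined up to a scalar, so the $W$-submodule $\tilde{L} \subset M_2$ generated by $w$ together with $FM_2$ descends to a globally defined subsheaf with $\tilde{L}/FM_2 = L$. I would verify locally, on the three charts used in the smoothness proof of $\mathcal{F}_0$, that $M_2/\tilde{L}$ is locally free of rank $3$; call this bundle $B$. The morphism $\mathcal{F}_1 \times_{\tilde{\mathcal{F}}_2} \tilde{\mathcal{F}}_2' \to \PP(B)$ then sends a rigid flag to the line $M_1/\tilde{L} \subset B$: this is well-defined because $w \in M_1$ and $FM_2 \subset M_1$ force $\tilde{L} \subset M_1$, while $\dim_k M_1/\tilde{L} = 1$ makes the quotient a line in the fibre of $B$. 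The inverse association, sending a line $\ell \subset B$ to the preimage of $\ell$ in $M_2$, reconstructs $M_1$, so the morphism is a locally closed immersion onto its image.

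Finally, expressing the remaining pairing constraint $\langle v,Fv\rangle \in W$ in the fibre coordinates $(a_5:a_6:a_7:a_8)$ on $\PP(B)$ produces precisely the single polynomial equation (9), which cuts out a divisor in the fourfold $\PP(B)$; a dimension count ($\dim \mathcal{F}_1 = 3 = \dim \PP(B) - 1$) together with the immersion above then identifies $\mathcal{F}_1$ with this divisor. The main obstacle I anticipate is the local freeness of $B$ across the whole of $\tilde{\mathcal{F}}_2'$, particularly above the exceptional divisors of $\tilde{\mathcal{F}}_2 \to \mathcal{F}_2$ and along the preimages of the lines of $\mathcal{F}_2$; verifying that the rank of $M_2/\tilde{L}$ does not jump on these loci requires a case analysis parallelling the three Jacobian computations that appeared in the proof of smoothness of $\mathcal{F}_0$.
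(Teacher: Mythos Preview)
Your proposal is correct and follows essentially the same route as the paper: build $\tilde{\mathcal{F}}_2'$ by adjoining the $p$-th root that makes $(a_9:a_{10}:a_{11})$ well-defined (equivalently, imposing $g_2,g_3$), set $B=M_2/\tilde{L}$ over it, and realise $\mathcal{F}_1$ inside $\PP(B)$ as the divisor cut out by the single equation $g_1$ coming from $\langle v,Fv\rangle\in W$. Two small slips to clean up: since $\mathcal{F}_1\to\tilde{\mathcal{F}}_2$ already factors through $\tilde{\mathcal{F}}_2'$ you should map $\mathcal{F}_1$ itself into $\PP(B)$ rather than the non-reduced fibre product $\mathcal{F}_1\times_{\tilde{\mathcal{F}}_2}\tilde{\mathcal{F}}_2'$, and the fibre of $\PP(B)$ is a $\PP^2$, so after using $w$ to eliminate one coordinate the fibre coordinates are three (e.g.\ $(a_5:a_7:a_8)$ once $a_6=0$), not the four $(a_5:a_6:a_7:a_8)$.
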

\begin{proof}
Recall that in order to define $M_1 \subset M_2$, we chose a basis 
$$
v=a_5v_0+a_6Fx_2+a_7Fx_3+a_8Fx_4, \quad
w=a_9Fx_2+a_{10}Fx_3+a_{11}Fx_4 \, 
$$
as in (6) with $\langle v,Fv\rangle$, $\langle v,Fw\rangle$, $\langle Fv,w\rangle$
all in $W$. 
The equations ($g_2$) and ($g_3$) correspond to the inseparable 
base change $\tilde{\mathcal{F}}_2^{\prime} \to\tilde{\mathcal{F}}_2$
given on the locus with $a_1\neq 0$ by (11) 
$$
(a_9/a_{10})^p= (a_2-a_2^{p^2})/(a_3-a_3^{p^2})\, .
$$
Then on $\tilde{\mathcal{F}}_2^{\prime}$ we have the bundle ${\PP}(B)$.
If $a_5\neq 0$ the morphism
${\mathcal F}_1 \to \tilde{\mathcal F}_2$ is defined by sending
$(M_1 \subset M_2 \subset M_3)$ to the point defined by $L:=M_1 \cap V^{-1}M_2^t \bmod FM_2$.
Indeed,
by $(F,V)M_2 \subset M_1$, the subspace $L$ contains $(F,V)M_2 \bmod FM_2$,
and $L$ is the one-dimensional space generated by $w$ of (6),
since one can check $\langle Vw, M_2 \rangle\subset W$ and if $a_5 \ne0$, 
then $\langle Vv, M_2 \rangle \not\subset W$.

For $a_5\neq 0$ we find from $\langle v, Fv\rangle \in W$ an equation 
$$
a_1a_8^p+a_2a_7^p-a_3a_6^p -a_5^{p-1}(a_1^pa_8+a_2^pa_7-a_3^pa_6)=0\, .
$$
This defines a rational curve with a cusp in ${\PP}^2={\PP}(M_2/\tilde{L})$.
As $\mathcal{F}_1$ is defined as the closure of the space of rigid flags,
we obtain that this equation defines $\mathcal{F}_1$ in ${\PP}(B)$.
Observe that in order to analyze this we may assume as we did in the 
preceding section that
$a_1=1$ and $a_2\neq 0$ and then $a_6=0$ and the curve can be written 
in coordinates $(a_5:a_7:a_8)$ as
$$
a_1a_8^p+a_2a_7^p -a_5^{p-1}(a_1^pa_8+a_2^pa_7)=0\, .
$$
The cusp is determined by $a_5=0$ and $a_8+a_2^{1/p}a_7=0$. 
\end{proof}

In particular we see that after an inseparable base change the bundle $B$
admits a nowhere vanishing section.  
\end{section}
\begin{section}{The Hodge bundle on the supersingular locus}\label{Hodge-bundle}
The description of principally polarized supersingular
abelian varieties of dimension $4$
via a flag gives us for each irreducible component $S$ of $S_4$
a morphism $\mathcal{F}_0 \to S$ and a fibration of $\mathcal{F}_0$
$$
\mathcal{F}_0 \langepijl{\pi_0} \mathcal{F}_1 \langepijl{\pi_1} 
\mathcal{F}_2 \to \mathcal{F}_3\, ,
$$
where $\mathcal{F}_i$ for $i=0,\ldots,3$ is the closure of the moduli space of
rigid polarized flag type quotients $Y_3 \to \cdots \to Y_i$.
Note that $\mathcal{F}_3$ is a point. 

We have seen above that these spaces $\mathcal{F}_i$ are non-singular.
In the following we view these as moduli stacks. This corresponds to dividing
by the appropriate automorphism groups, here by ${\rm Aut}(Y_3,\eta)$.

\begin{lemma}\label{degree-p} For each irreducible component $S$ of $S_4$ in $\mathcal{A}_4\otimes {\FF}_p$ 
the natural morphism $\mathcal{F}_0\to S$ is a morphism of degree $p$.
\end{lemma}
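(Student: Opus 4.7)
The plan is to show that $\pi: \mathcal{F}_0 \to S$ is set-theoretically injective over a dense open subset (so has separable degree $1$) and has inseparable degree exactly $p$, the latter inherited from the inseparability of $\pi_1: \mathcal{F}_1 \to \mathcal{F}_2$ established in the preceding section.

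First I would restrict to the dense open subset $U \subset S$ of points $Y_0$ with $a(Y_0)=1$, which is nonempty on each irreducible component by Li--Oort (the $a\ge 2$ locus has positive codimension). For $Y_0 \in U$, recall from Section~\ref{FTQ} that the minimal isogeny $\rho:(A_{1,1}^4\otimes k,\eta)\to Y_0$ is unique up to isomorphism, and the rigid factorization $Y_3 \to Y_2 \to Y_1 \to Y_0$ is then forced by the formula $G_i=\ker(\rho)\cap Y_3[F^{3-i}]$. Hence the set-theoretic generic fiber of $\pi$ is a single geometric point, its separable degree is $1$, and $\deg(\pi)=[k(\mathcal{F}_0):k(S)]_{\mathrm{insep}}$.

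To compute this inseparable degree, I would use the tower $\mathcal{F}_0 \langepijl{\pi_0} \mathcal{F}_1 \langepijl{\pi_1} \mathcal{F}_2 \langepijl{\pi_2} \mathcal{F}_3$. From the preceding section, $\pi_1$ has inseparability degree exactly $p$, witnessed by the relation $(a_9/a_{10})^p = (a_2-a_2^{p^2})/(a_3-a_3^{p^2})$. The map $\pi_2$ (inclusion of the smooth Fermat surface $\mathcal{F}_2 \subset \PP^3$) is separable, and $\pi_0$, whose generic fibers are the Artin--Schreier-type curves coming from equation $g_1$ of (8), is also generically separable. Hence the total purely inseparable part of $k(\mathcal{F}_0)/k$ has degree $p$. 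To conclude $\deg(\pi)=p$, I would verify that this inseparability descends faithfully to $k(\mathcal{F}_0)/k(S)$, i.e., that $k(S)$ contains no additional $p$-th roots beyond those already lying in $k(\mathcal{F}_2)$.

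The main obstacle is precisely this last verification: showing that the $p$-th root $a_9/a_{10}$ introduced by $\pi_1$ is not already a pullback from $S$. A clean route is a tangent-space calculation at a generic point, showing that $d\pi:T_{\mathcal{F}_0}\to T_S$ has a $1$-dimensional kernel aligned with the Frobenius direction from $\pi_1$; equivalently, one traces through the Dieudonn\'e module description and observes that the parameters $a_2,a_3,a_4$ of $\mathcal{F}_2$ pull back to elements of $k(S)$ via the Hodge filtration, while the adjoined $p$-th root does not. This step is where the detailed description of the blow-up $\tilde{\mathcal{F}}_2$ and the inseparable cover $\mathcal{F}_1 \to \tilde{\mathcal{F}}_2$ is indispensable, as it pins down exactly the inseparable extension contributed by moving between flag data and the PPAV $Y_0$.
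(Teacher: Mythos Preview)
Your argument that the separable degree is $1$ is sound: over the $a=1$ locus, rigidity forces uniqueness of the flag type quotient. (A small slip: $\pi_0$ is a $\PP^1$-bundle, so the equation $g_1$ belongs to $\pi_1$, not $\pi_0$; but your conclusion that $\pi_0$ is separable stands.) The genuine gap is the inseparable degree. You try to transport the inseparability of $\pi_1:\mathcal{F}_1\to\mathcal{F}_2$---a map \emph{within} the parameter tower---to the moduli map $\mathcal{F}_0\to S$, but these have unrelated targets and there is no map $S\to\mathcal{F}_2$ through which to compare them. The phrase ``the total purely inseparable part of $k(\mathcal{F}_0)/k$'' is not meaningful for a positive-dimensional variety; inseparability is a property of a specific finite extension. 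You acknowledge the obstacle in your last paragraph but do not resolve it: the tangent-space calculation is only gestured at, and no argument is given that the inseparability of $\pi_1$ (coming from equation (11)) forces a nontrivial kernel of $d\pi$ at a generic point of $\mathcal{F}_0$.

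The paper avoids this by a different, fiberwise approach. Over a geometric point $x\in\mathcal{F}_2(k)$ the defining equation $g_2=a_1a_{11}^p+a_2a_{10}^p-a_3a_9^p$ of the fiber $\mathcal{F}_{1,x}$ is literally the $p$-th power of $g_2'=a_1^{1/p}a_{11}+a_2^{1/p}a_{10}-a_3^{1/p}a_9$ (the $p$-th roots exist since $k=\bar k$). Li--Oort \cite[7.11]{L-O} then shows that the space cut out by $g_1,g_2',g_3$ (their non-garbage component $\mathcal{V}_{11}$), pulled back along $\pi_0$, maps bijectively as stacks onto its image in $\mathcal{A}_g$. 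The factor $p$ is therefore the scheme-theoretic multiplicity arising from $g_2=(g_2')^p$ in each fiber, not the inseparability degree of the map $\pi_1$ in the tower.
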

\begin{proof}
Let $x$ be a geometric point of ${\mathcal F_2}$.
Let ${\mathcal F}_{1,x}$ be the fiber $\pi_1^{-1}(x)$.
We claim that $\pi_0^{-1}({\mathcal F}_{1,x}) \to \mathcal A_g$ is a $p$-to-$1$ map onto the image.
Indeed if $x$ is represented by $(a_1,a_2,a_3,a_4)\in k^4$ for an algebraically closed field $k$, then
the fiber $\pi_1^{-1}(x)$ is described in $a_5,\ldots,a_{11}$ by
$$
\begin{aligned}
g_1:=& a_1a_8^p-a_1^pa_5^{p-1}a_8+a_2a_7^p-a_2^pa_5^{p-1}a_7 +a_3^pa_5^{p-1}a_6-a_3a_6^p=0, \\
g_2:=& a_1a_{11}^p+a_2a_{10}^p-a_3a_9^p=0\, ,  \\
g_3:=& a_1^pa_{11}+a_2^pa_{10}-a_3^pa_{9}=0\, .\\
\end{aligned}
$$
But $g_2$ is the $p$-th power of
\[
g_2' := a_1^{1/p} a_{11} + a_2^{1/p} a_{10} - a_3^{1/p}a_9\, .
\]
The space defined by $g_1, g'_2, g_3$, say ${\mathcal F}'_{1,x}$, coincides on an open part of
$\mathcal{F}_1$ with the fiber 
of ${\mathcal V}_{11} \to {\mathcal V}_2$ studied in \cite[9.7]{L-O},
where ${\mathcal V}_2$ corresponds to our ${\mathcal F}_2$ and ${\mathcal V}_{11}$ is the non-garbage component considered in \cite[9.7]{L-O}.
Note that ${\mathcal F}'_{1,x}$ is a closed subscheme of ${\mathcal F}_{1,x}$.
Thanks to the proof by Li and Oort (cf.~\cite[7.11]{L-O}), the map $(\pi_0)^{-1}({\mathcal F}'_1) \to {\mathcal A}_g$ is one-to-one on its image as stacks; indeed,  the proof of Li and Oort was done by fiberwise arguments. The claim follows.
\end{proof}

The space $\mathcal{F}_i$ carries an abelian variety $\mathcal{Y}_i$.
Its cotangent bundle along the zero section may be described by Dieudonn\'e
theory. Using contravariant Dieudonn\'e theory with the Dieudonn\'e module $M_i$
of a fibre $Y_i$ of $\mathcal{Y}_i$, we have
$$
{\rm Lie}(Y_i)^{\vee}=VM_i/pM_i\, .
$$
The flag type quotient provides an inductive construction.
For $i=2,1,0$ we have the
exact sequence
$$
0 \to pM_{i+1}/pM_i \to VM_i/pM_i \to VM_{i+1}/pM_{i+1} \to VM_{i+1}/VM_i \to 0 \, .
$$
In the Grothendieck group of vector bundles on $\mathcal{F}_i$ we thus get the identity
$$
[{\rm Lie}(\mathcal{Y}_i)^{\vee}]=[\pi_i^*({\rm Lie}(\mathcal{Y}_{i+1})^{\vee})] - [Q_i] + [Q_i^{(p)}]\, ,
$$
where $Q_i$ is the locally free $\mathcal{O}_{\mathcal{F}_i}$-module
of rank $i+1$ corresponding to $VM_{i+1}/VM_i$.
Moreover, the exact sequence for $i=1$ and $i=2$
$$
0 \to VM_i/pM_{i+1} \to VM_{i+1}/pM_{i+1} \to VM_{i+1}/VM_i \to 0
$$
gives us an exact sequence of $\mathcal{O}_{\mathcal{F}_i}$-modules
$$
0 \to U_i \to \pi_i^*({\rm Lie}(\mathcal{Y}_{i+1})^{\vee}) \to Q_i \to 0
$$
with $U_i$ the locally free $\mathcal{O}_{\mathcal{F}_i}$-module
defined by  $VM_i/pM_{i+1}$. For $i=0$ we have
$$
0 \to VM_0/VM_1^t \to VM_1/VM_1^t \to VM_1/VM_0 \to 0
$$
and this gives a short exact sequence of $\mathcal{O}_{\mathcal{F}_0}$-modules
$$
0 \to U_0 \to \pi_0^*(K_1) \to Q_0 \to 0
$$
with $K_1$ the locally free sheaf corresponding to the Dieudonn\'e module
of $\ker(\mathcal{Y}_1 \langepijl{\eta_1} \mathcal{Y}_1^t)$.

In the following we will abuse the notation $Q_i$ also for the pullback of $Q_i$
to $\mathcal{F}_{i-1}$ in order to simplify notation.
Since ${\rm Lie}(\mathcal{Y}_3)^{\vee}$ is trivial of rank $4$ we get from the above the
class of the Hodge bundle ${\EE}={\rm Lie}(\mathcal{Y}_0)^{\vee}$ in the Grothendieck group
of vector bundle on $\mathcal{F}_0$.

\begin{proposition}\label{class of EE}
The class of Hodge bundle of $\mathcal{Y}_0$ in the Grothendieck group of
vector bundles on $\mathcal{F}_0$ is given by
$$
\begin{aligned}
 {} [{\EE}] 
& = 4 - [Q_2]+[Q_2^{(p)}]- [Q_1]+[Q_1^{(p)}]-[Q_0]+[Q_0^{(p)}]\\
& = 4 +[U_2]-[U_2^{(p)}]-[Q_1]+[Q_1^{(p)}]-[Q_0]+[Q_0^{(p)}]\, , \\
\end{aligned} 
$$
where $4$ stands for the class of trivial rank $4$ bundle and where
$U_2$ and $Q_0$ have rank $1$, while $Q_1$ has rank $2$.
\end{proposition}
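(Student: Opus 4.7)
The plan is to iterate an identity that has essentially been stated in the paragraph just preceding the proposition. For each $i\in\{0,1,2\}$ the four-term exact sequence of $\mathcal{O}_{\mathcal{F}_i}$-modules
$$0 \to pM_{i+1}/pM_i \to VM_i/pM_i \to VM_{i+1}/pM_{i+1} \to VM_{i+1}/VM_i \to 0,$$
combined with the identification $pM_{i+1}/pM_i \cong Q_i^{(p)}$, gives the recursive relation
$$[{\rm Lie}(\mathcal{Y}_i)^\vee] \;=\; [\pi_i^*({\rm Lie}(\mathcal{Y}_{i+1})^\vee)] - [Q_i] + [Q_i^{(p)}]$$
in the Grothendieck group. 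I would apply this successively for $i=2,1,0$, pull everything back to $\mathcal{F}_0$ (suppressing $\pi_i^*$ in the notation), and use that $\mathcal{F}_3$ is a point and $\mathcal{Y}_3 = E^4$ is a fixed superspecial variety, so ${\rm Lie}(\mathcal{Y}_3)^\vee$ is the trivial rank-$4$ bundle. Telescoping then yields the first form
$$[\EE] \;=\; 4 - [Q_2]+[Q_2^{(p)}]-[Q_1]+[Q_1^{(p)}]-[Q_0]+[Q_0^{(p)}].$$

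For the second form I would use the short exact sequence on $\mathcal{F}_2$
$$0 \to U_2 \to \pi_2^*({\rm Lie}(\mathcal{Y}_3)^\vee) \to Q_2 \to 0$$
coming from $0 \to VM_2/pM_3 \to VM_3/pM_3 \to VM_3/VM_2 \to 0$. Since the middle term is trivial of rank $4$, this reads $[Q_2]=4-[U_2]$ in $K_0(\mathcal{F}_2)$, and Frobenius-pulling back the identity gives $[Q_2^{(p)}]=4-[U_2^{(p)}]$, so
$$-[Q_2]+[Q_2^{(p)}] \;=\; -(4-[U_2])+(4-[U_2^{(p)}]) \;=\; [U_2]-[U_2^{(p)}].$$
Substitution into the first expression for $[\EE]$ produces the second form. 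The rank claims are immediate from $\dim(M_{i+1}/M_i)=i+1$ applied to $i=0,1$ together with $\dim(M_3/M_2)=1$ for $U_2$.

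The only point in the argument that is not pure formal bookkeeping is the Frobenius-twist identification $pM_{i+1}/pM_i\cong Q_i^{(p)}$, and I expect this to be the main (and essentially only) obstacle. In the display formalism of Section \ref{DMD}, however, this identification is forced by axiom (iii) and the factorization $p = V\circ F$ with $F$ semilinear: on the graded piece $VM_{i+1}/VM_i$, multiplication by $p$ realizes the pullback along the absolute Frobenius of the base, so the kernel of $VM_i/pM_i \to VM_{i+1}/pM_{i+1}$ is indeed the Frobenius twist of $Q_i$. Once this identification is granted, the proposition follows by the formal computation above.
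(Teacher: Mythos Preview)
Your proposal is correct and mirrors the paper exactly: the proposition is not given a separate proof in the paper but is stated as the outcome of telescoping the recursive identity $[{\rm Lie}(\mathcal{Y}_i)^{\vee}]=[\pi_i^*({\rm Lie}(\mathcal{Y}_{i+1})^{\vee})] - [Q_i] + [Q_i^{(p)}]$ (derived from the four-term exact sequence just above it) together with the triviality of ${\rm Lie}(\mathcal{Y}_3)^{\vee}$ and the relation $[U_2]+[Q_2]=4$. One small slip: in your rank justification you write $\dim(M_3/M_2)=1$, but in fact $\dim(M_3/M_2)=3$; the rank of $U_2$ comes out to $1$ because $U_2$ sits in $0\to U_2\to[4]\to Q_2\to 0$ with ${\rm rank}(Q_2)=3$.
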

Note that here we abuse the notation $Q_i$ for the pull back of $Q_i$ to $\mathcal{F}_0$.

We now set
$$
\ell_i= c_1(Q_i) \quad \text{\rm for $i=0,1,2$.}
$$
We may consider $\ell_i$
as a class living on $\mathcal{F}_i$, but we will denote the pull back $\pi_0^*(\ell_1)$, $\pi_1^*(\ell_2)$ and $\pi_0^*(\pi_1^*(\ell_2))$ also by $\ell_1$, $\ell_2$
in order to simplify notation.

Proposition \ref{class of EE} implies the following.

\begin{proposition}\label{Chern-Hodge-bundle}
The total Chern class $c({\EE})$
of the Hodge bundle on $\mathcal{F}_0$ is given by
$$
c({\EE})=
\frac{(1-\ell_2)(1+p\, \ell_1+p^2c_2(Q_1))(1+p\, \ell_0)}{(1-p\, \ell_2)(1+\ell_1+c_2(Q_1))(1+\ell_0)}\, .
$$
\end{proposition}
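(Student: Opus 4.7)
The plan is to deduce the formula directly from Proposition~\ref{class of EE} by applying the Whitney product formula. Recall that the total Chern class is multiplicative in the Grothendieck group: for any K-theoretic difference $[A]-[B]$, one has $c([A]-[B])=c(A)/c(B)$. So I would take each summand in the K-theoretic expression for $[\EE]$, compute its total Chern class, and multiply.

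I would work with the \emph{second} expression in Proposition~\ref{class of EE}, since there the rank~$3$ bundle $Q_2$ has been replaced by the line bundle $U_2$, which is far easier to handle. From the exact sequence $0\to U_2\to \pi_2^{*}(\mathrm{Lie}(\mathcal{Y}_3)^{\vee})\to Q_2\to 0$ with $\mathrm{Lie}(\mathcal{Y}_3)^{\vee}$ trivial of rank~$4$, Whitney gives $c(U_2)\cdot c(Q_2)=1$, so $c_1(U_2)=-c_1(Q_2)=-\ell_2$ and hence $c(U_2)=1-\ell_2$. Combining this with the Frobenius-twist rule $c_i(V^{(p)})=p^i c_i(V)$, I obtain $c(U_2^{(p)})=1-p\ell_2$, and consequently the summand $[U_2]-[U_2^{(p)}]$ contributes the factor $(1-\ell_2)/(1-p\ell_2)$ to $c(\EE)$.

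For the remaining two K-theoretic summands I would apply the Frobenius-twist rule directly: with $Q_1$ of rank~$2$ one has $c(Q_1)=1+\ell_1+c_2(Q_1)$ and $c(Q_1^{(p)})=1+p\ell_1+p^2 c_2(Q_1)$, while with $Q_0$ of rank~$1$ one has $c(Q_0)=1+\ell_0$ and $c(Q_0^{(p)})=1+p\ell_0$. Multiplying the three contributions together, and noting that the trivial rank~$4$ summand contributes $1$, produces exactly the claimed formula.

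The computation is essentially bookkeeping, so no step presents a genuine obstacle once Proposition~\ref{class of EE} and the exact sequences of Section~\ref{Hodge-bundle} are in place. The only point requiring attention is the Frobenius-twist rule $c_i(V^{(p)})=p^i c_i(V)$, which is automatic since pullback along the absolute Frobenius multiplies $i$-th Chern classes by $p^i$ on the Chow groups.
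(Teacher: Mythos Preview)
Your proof is correct and is exactly the approach the paper intends: the paper simply introduces the proposition with the sentence ``Proposition~\ref{class of EE} implies the following'' and gives no further argument, so you have faithfully supplied the omitted bookkeeping (Whitney multiplicativity in $K_0$, the identity $c_1(U_2)=-\ell_2$ from the tautological sequence, and the Frobenius-twist rule $c_i(V^{(p)})=p^i c_i(V)$).
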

\begin{corollary}\label{Cor-c2Q1}
We have $c_2(Q_1)= (\ell_0^2+\ell_1^2-\ell_2^2)/2$. Moreover,
the class $\ell_0^2$ is a pullback from $\mathcal{F}_1$.
\end{corollary}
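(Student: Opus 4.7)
The plan is to exploit the universal relation $c(\EE)\,c(\EE^{\vee})=1$ in the tautological ring of $\tilde{\A}_4\otimes{\FF}_p$, which in degree~$2$ becomes the familiar identity $\lambda_1^2=2\lambda_2$. Pulled back to $\mathcal{F}_0$ via $\pi_0$, this gives a relation among $\ell_0,\ell_1,\ell_2$ and $q:=c_2(Q_1)$ once we expand the Chern class formula of Proposition~\ref{Chern-Hodge-bundle}.

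First I would expand $c(\EE)=N/D$ from that proposition to degree~$2$. The degree~$1$ part is straightforward and yields
$$
\lambda_1=(p-1)(\ell_0+\ell_1+\ell_2),
$$
while the degree~$2$ part $\lambda_2=N_2-N_1D_1+D_1^2-D_2$ is a $\QQ$-polynomial in the monomials $\ell_i\ell_j$ and $q$ whose coefficients depend polynomially on $p$. Substituting into $\lambda_1^2=2\lambda_2$ and collecting, I expect the three mixed products $\ell_0\ell_1$, $\ell_0\ell_2$, $\ell_1\ell_2$ to cancel identically on both sides, so that the identity reduces to
$$
(p^2-1)\bigl(\ell_0^2+\ell_1^2-\ell_2^2-2q\bigr)=0.
$$
Dividing by the invertible rational number $p^2-1$ then gives the first assertion
$$
c_2(Q_1)=\tfrac12(\ell_0^2+\ell_1^2-\ell_2^2).
$$

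For the pullback claim I would simply rewrite the identity as $\ell_0^2=2\,c_2(Q_1)-\ell_1^2+\ell_2^2$. The class $c_2(Q_1)$ is a pullback from $\mathcal{F}_1$ by construction, since $Q_1$ is a bundle on $\mathcal{F}_1$; the class $\ell_1^2$ lives on $\mathcal{F}_1$ trivially; and $\ell_2^2$ is pulled back from $\mathcal{F}_2$ via $\pi_1$, hence a fortiori from $\mathcal{F}_1$. Therefore $\ell_0^2$ is a pullback from $\mathcal{F}_1$ as well.

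The main obstacle is the combinatorial bookkeeping in computing $\lambda_2$: one has to track signs carefully in the three pieces $-N_1D_1$, $D_1^2$ and $-D_2$, and verify that the coefficients of the off-diagonal terms $\ell_i\ell_j$ ($i\neq j$) coming from $\lambda_1^2$ and from $2\lambda_2$ agree, while the coefficients of $\ell_i^2$ and $q$ combine into the uniform factor $p^2-1$. Apart from this cancellation ``miracle'', which reflects the single universal relation in the tautological ring, the argument is entirely elementary.
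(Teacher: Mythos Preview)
Your proposal is correct and follows essentially the same route as the paper: both expand the Chern class formula of Proposition~\ref{Chern-Hodge-bundle} to read off $\lambda_1=(p-1)(\ell_0+\ell_1+\ell_2)$ and $2\lambda_2-\lambda_1^2=(p^2-1)(2c_2(Q_1)-\ell_0^2-\ell_1^2+\ell_2^2)$, then invoke the tautological relation $\lambda_1^2=2\lambda_2$ on $\mathcal{A}_4$ to isolate $c_2(Q_1)$, and finally observe that $Q_1$, $\ell_1$, $\ell_2$ all live on $\mathcal{F}_1$ so that $\ell_0^2$ must as well. The paper simply states the outcome of the degree-$2$ expansion without the bookkeeping you outline, but the argument is the same.
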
\label{c2Q1}
\begin{proof}
We deduce
$\lambda_1=(p-1)(\ell_0+\ell_1+\ell_2)$
and
$$
2\,\lambda_2 -\lambda_1^2 =(p^2-1)(2\, c_2(Q_1)-\ell_0^2-\ell_1^2+\ell_2^2)
$$
and since $\lambda_1^2=2\, \lambda_2$ on $\mathcal{A}_4$ the formula for
$c({\EE})$ follows.
Since $Q_1$ lives on $\mathcal{F}_1$ it implies
that the class $\ell_0^2$ is a pullback from $\mathcal{F}_1$.
\end{proof}
\end{section}
\begin{section}{Loci with $a$-number $\geq 2$ for $g=4$}\label{section-a-greater-2}
The abelian variety corresponding to the generic point of an irreducible component $S$
of $S_4$ has $a$-number equal to $1$. An irreducible component of the closed stratum of 
$S$ where $a\geq 2$ is of one of two types, as shown in \cite[Section 9.9]{L-O}. See also \cite{Harashita2}. 
A component of the preimage of the first type for the natural morphism
$\mathcal{F}_0\to S$ maps under $\mathcal{F}_0 \to \mathcal{F}_2$ to a line
on $\mathcal{F}_2$, while such a component of the second type 
maps either dominantly to $\mathcal{F}_2$, or maps to a line of $\mathcal{F}_2$, or
maps to a point of $\mathcal{F}_2({\FF}_{p^2})$.
\bigskip

\subsection{Loci of the first type.}
The first type parametrizes flag types
$M_3 \supset M_2 \supset M_1 \supset M_0$ such that there exists a totally isotropic subspace
$I$ of $M_3/FM_3$ such that $M_1 \subset N$ with $N\subset M_3$ the submodule 
generated by $I$ and $FM_3$.
Since the automorphism group of $M_3$ acts transitively on totally isotropic
subspaces defined over ${\FF}_{p^2}$, we may assume that $I=\langle x_1,x_2\rangle$. 
In terms of abelian varieties, such a flag type can be obtained from a flag type
$$
E^4=Y_3 \langepijl{\rho_3} Y_2 \langepijl{\rho_2} Y_1 
\langepijl{\rho_1} Y_0 \eqno(12)
$$
with quasi-polarization $\eta_3: Y_3 \to Y_3^t$ with 
$\ker \eta_3=E^4[F^3]$ if the composition
$\rho_2 \rho_3: E^4 \to Y_1$ factors through
$$
1_{E^2}\times F_{E^2}: E^4 \longrightarrow E^2 \times E^2/E^2[F]\, .
$$
By identifying $E^2 \times E^2/E^2[F]$ with $E^4$ and thus factoring 
$\rho_2\rho_3$, 
we put $Z_2=E^2 \times E^2/E^2[F] \cong E^4$ and $Z_1=Y_1$
and then associate to it the flag
$$
E^4=Z_2 \langepijl{\zeta_2} Z_1 \langepijl{\zeta_1} Z_0 \, , \eqno(13)
$$
where $\deg(\zeta_2)=p^3$ and $\deg(\zeta_1)=p$ and 
$\theta_2: Z_2 \to Z_2^t$ is a quasi-polarization
with kernel equal to $E^4[p]$. 

This can be described by Dieudonn\'e modules: 
consider the Dieudonn\'e module
 $N_2=\langle x_1,x_2,Fx_3,Fx_4\rangle$ with 
$x_1,x_2,x_3,x_4$ the skeleton of $M_3$.
It satisfies $N_2^t=F^2N_2$. We choose a submodule $N_1$ generated by 
$u=ax_1+bx_2+cFx_3+dFx_4$ and $FN_2$ with $\langle u,Fu\rangle=0$.
By viewing $u$ as an element of $N_2/FN_2$ and the coefficients $a,b,c,d$ in $k$ we
obtain an equation
$$
ad^p-a^pd+bc^p-b^pc=0 \, . \eqno(14)
$$
Then $\dim N_2/N_1=3$ and $\dim N_1/N_1^t=2$. We then can choose a Dieudonn\'e
submodule $N_0$ with $N_1^t \subset N_0 \subset N_1$ with $\dim N_1/N_0=1$.
The filtration $N_0\subset N_1 \subset N_2$ corresponds to (13).
The moduli of $N_2 \supset N_1$ 
defines a surface $\mathcal{G}_1$ in projective space ${\PP}^3$ 
given by (14) 
and choosing $N_0$ defines a ${\PP}^1$-bundle $\mathcal{G}_0 \to \mathcal{G}_1$. 

\bigskip

Let $\mathcal{S}_1$ be the subscheme of $\mathcal{G}_1$ where $a=b=0$ and let
$\mathcal{S}_0$ be the inverse image of $\mathcal{S}_1$ under $\mathcal{G}_0\to \mathcal{G}_1$.
We now discuss how to map $\mathcal{G}_1\backslash \mathcal{S}_1$ to $\mathcal{F}_1$. Given $u$
we choose $v_0$ as a multiple of $u$. This determines a submodule $M_2$
of $M_3$,  generated by $FM_3$ and $v_0$, 
that contains $N_1$ and we set $M_1=N_1$. Note that $M_2$ is generated
also by $ax_1+bx_2$ and $FM_3$. 
Then we can choose two generators $v,w$ for $M_1$ modulo $FM_2$
and assuming that $a\neq 0$ we may choose
$$
v=a_5v_0+cFx_3+dFx_4=ax_1+bx_2+cFx_3+dFx_4 , \quad w=Fx_2\, .
$$
In terms of the coordinates in Section \ref{g=4flagtypes} we have
$$
a=a_1a_5,\quad b=a_2a_5, \quad c=a_7, \quad d=a_8\, .
$$
The fibre of $\mathcal{G}_1 \backslash \mathcal{S}_1\to \mathcal{F}_1$ 
over a point $v_0=(1:t:0:0)$ of $\mathcal{F}_2$ consists of all
$(a:b:c:d)$ with $d^p-a^{p-1}d+tc^p-t^pa^{p-1}c=0$; 
it is defined by a 
Lefschetz pencil on $\mathcal{G}_1$ defined by $b=ta$. We refer
to the paper \cite{K2017} for such a Lefschetz fibering.
The general fibre is a rational
curve with one singularity given by $a=0$.

Recall that the automorphism group 
of $\mathcal{F}_2$ 
acts transitively on the set of lines of $\mathcal{F}_2$ defined
over ${\FF}_{p^2}$.
For each line $L$ defined over ${\FF}_{p^2}$ 
on the surface $\mathcal{F}_2$ we find a surface
isomorphic to $\mathcal{G}_1$ 
that is contained in the inverse image $\pi_1^{-1}(L)$. 

The fibration $\mathcal{G}_0 \to \mathcal{G}_1$ has a 
natural section $S$ by taking $Z_0=Z_2/Z_2[F]$. 
Note that then $N_0=FN_2 \subset N_1$. 
This implies that $Z_0$, determined by
$N_0$, is constant for all choices of $N_1$. It also implies that this 
section is blown down under the natural morphism $\mathcal{G}_0 \to S_4 \subset 
\mathcal{A}_4$ that associates to a flag type quotient (13) the isomorphism
class of $Z_0$. We summarize: 

Let $M_3=A_{1,1}^4$ with quasi-polarization such that $M_3^t=F^3M_3$. 
 
\begin{proposition} For each totally isotropic subspace of $M_3/FM_3$ 
there is a threefold $\mathcal{G}_0$ that is a ${\PP}^1$-bundle 
$\mathcal{G}_0 \to \mathcal{G}_1$ over a surface given by (14) with a section 
and a morphism $\mathcal{G}_0\backslash \mathcal{S}_0 \to \mathcal{F}_0$ whose image is a locus of supersingular
abelian $4$-folds with $a\geq 2$. Under $\mathcal{G}_0 \backslash \mathcal{S}_0 \to 
\mathcal{F}_2$ it maps to a line on $\mathcal{F}_2$.
Under the morphism $\mathcal{G}_0 \to \mathcal{A}_4\otimes {\FF}_p$
the section of $\mathcal{G}_0 \to \mathcal{G}_1$ is blown down.
\end{proposition}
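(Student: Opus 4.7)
The plan is to construct $\mathcal{G}_0$ as a moduli of nested Dieudonn\'e modules, mirroring the construction of $\mathcal{F}_0$ in Section \ref{g=4flagtypes}, and then build the map $\mathcal{G}_0 \to \mathcal{F}_0$ and verify the geometric assertions. First I would use Witt's theorem (as already invoked for the lines on $\mathcal{F}_2$) to see that $\mathrm{Aut}(M_3,\eta)$ acts transitively on ${\FF}_{p^2}$-rational totally isotropic $2$-planes in $M_3/FM_3$, reducing to the case $I = \langle x_1, x_2\rangle$. Setting $N_2 := \langle x_1, x_2, Fx_3, Fx_4\rangle$, the identification $N_2^t = F^2 N_2$ follows from the pairings $\langle x_1, F^4 x_4\rangle = \langle x_2, F^4 x_3\rangle = 1$ combined with the isotropy of $I$. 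A submodule $N_1 \subset N_2$ with $(F,V) N_2 \subset N_1$ of colength $1$ is then determined by a line in $N_2/(F,V)N_2$, spanned by $u = ax_1 + bx_2 + cFx_3 + dFx_4$, and the condition $\langle u, Fu\rangle \in W$ reads modulo $p$ as equation (14), carving out the surface $\mathcal{G}_1 \subset {\PP}^3$.

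Since $N_1/N_1^t$ has $k$-dimension $2$, submodules $N_0$ with $N_1^t \subset N_0 \subset N_1$ of colength $1$ form a ${\PP}^1$-bundle $\mathcal{G}_0 \to \mathcal{G}_1$. The morphism $\mathcal{G}_0 \to \mathcal{F}_0$ is obtained by sending $(N_0, N_1)$ to the flag $M_0 = N_0 \subset M_1 = N_1 \subset M_2 \subset M_3$ with $M_2 := \langle ax_1 + bx_2\rangle + FM_3$. The polarized flag axioms of Section \ref{DMD} (namely $(F,V) M_i \subset M_{i-1}$ and $(F,V)^i M_i \subset M_i^t$) are then checked from $(F,V)N_2 \subset N_1$, the chain $N_1^t \subset N_0 \subset N_1$, and the total isotropy of $I$. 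Because $M_2$ depends only on $(a{:}b)$, the composition $\mathcal{G}_0 \to \mathcal{F}_0 \to \mathcal{F}_2$ has image the line $\{(a{:}b{:}0{:}0)\}$, which lies on $\mathcal{F}_2$ since its defining equation $a_1 a_4^{p^2} - a_1^{p^2} a_4 + a_2 a_3^{p^2} - a_2^{p^2} a_3 = 0$ vanishes identically when $a_3 = a_4 = 0$.

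For $a(Y_0) \geq 2$ on the image, the elements $x_1, x_2 \in N_1$ satisfy $(F-V)x_i = 0$ by the skeleton relation, and the totally isotropic choice of $I$ forces their classes in $N_0/(FN_0 + VN_0)$ to remain linearly independent; translating via Dieudonn\'e theory this gives $\alpha_p^2 \subset A(Y_0) = \ker F \cap \ker V$. Finally, the tautological section $S \subset \mathcal{G}_0$ picked out by $N_0 := FN_2$ (which automatically contains $N_1^t$) assigns the constant abelian variety $Y_0 = Z_2/Z_2[F] \cong E^4$ to every point of $\mathcal{G}_1$, and is therefore contracted to a single point under the modular map $\mathcal{G}_0 \to \mathcal{A}_4 \otimes {\FF}_p$. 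The main obstacle is twofold: verifying that the constructed flag $M_0 \subset \cdots \subset M_3$ indeed lies in the closure $\mathcal{F}_0$ of the rigid locus rather than in some larger component of the ambient moduli $\mathcal{M}$, and making the $a$-number jump argument precise at points where $N_1$ degenerates; both steps plausibly require combining the explicit Dieudonn\'e description of Section \ref{DMD} with the Li-Oort rigidity criterion recalled in Section \ref{FTQ}.
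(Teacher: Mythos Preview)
Your construction follows the paper's almost verbatim: the same reduction to $I=\langle x_1,x_2\rangle$ via Witt, the same module $N_2=\langle x_1,x_2,Fx_3,Fx_4\rangle$ with $N_2^t=F^2N_2$, the same parametrization of $N_1$ by $u$ leading to equation (14), the same ${\PP}^1$-bundle $\mathcal{G}_0\to\mathcal{G}_1$ from the choice of $N_0$ in $N_1/N_1^t$, the same map to $\mathcal{F}_0$ via $M_2=\langle ax_1+bx_2\rangle+FM_3$ and $M_1=N_1$, and the same section $N_0=FN_2$ with constant $Z_0$. So the strategy is correct and coincides with the paper's.

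There is, however, a genuine gap in your argument for $a(Y_0)\ge 2$. You assert that $x_1,x_2\in N_1$, but this is false: $N_1$ is generated over $(F,V)N_2=FN_2$ by the single vector $u=ax_1+bx_2+cFx_3+dFx_4$, so only the combination $ax_1+bx_2$ (modulo $FN_2$) lies in $N_1$, not $x_1$ and $x_2$ separately. Consequently the claim that their classes in $N_0/(FN_0+VN_0)$ are linearly independent does not follow as stated. The paper does not attempt a direct computation here; instead it takes the characterization of the first-type locus (existence of a totally isotropic $I$ with $M_1\subset \langle I\rangle+FM_3$) as \emph{input} from the classification of the $a\ge 2$ stratum in \cite[Section 9.9]{L-O} and \cite{Harashita2}, and then builds $\mathcal{G}_0$ as a parameter space for such flags. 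If you want a self-contained argument, you would need to compute $\dim_k M_0/(F,V)M_0$ (or the dual version) directly from the explicit generators of $N_0$, which is doable but not what you wrote. The obstacle you flag about landing in the rigid closure $\mathcal{F}_0$ is real and is likewise handled in the paper only implicitly, by exhibiting the map in the coordinates $(a_1,\ldots,a_{11})$ of Section \ref{g=4flagtypes} (with $a=a_1a_5$, $b=a_2a_5$, $c=a_7$, $d=a_8$ and $w=Fx_2$).
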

\subsection{Loci of the second type}\label{g=4secondtype}
An irreducible component of the preimage under
$\mathcal{F}_0 \to S$ 
of the locus of $a$-number $\ge 2$ of the second type inside
an irreducible component $S$ of the supersingular locus $S_4$
is realized as follows. It is the locus of $M_{\bullet}$ 
with $M_0 \subset N \subset M_3$ for a fixed superspecial 
quasi-polarized Dieudonn\'e module $N$ with $N^t=FN$.
Such $N$ come in three sorts: the first sort with $N=FM_3$,
the second with $N=\langle x_1,Fx_2,Fx_3,px_4\rangle$ and the
third sort where $N=\langle x_1,x_2,px_3,px_4\rangle$ after we change
generators as in the proof of Theorem \ref{thmZariskiclosure}.

In this subsection we treat the case where $N$ is of the first sort,
while the case of the second sort in treated in the next section
and the last case is left to the reader. The first case is characterized by the condition $M_1 \subset FM_3$.
This condition is determined on $\mathcal{F}_1$. 

Let $T$ be an irreducible component in $\mathcal{F}_0$ of the preimage of 
an irreducible component of the locus with $a \geq 2$ of the second type and first sort.

The condition $M_1 \subset FM_3$
can be paraphrased by saying 
that the natural homomorphism
$$
M_1/(F,V)M_2 \to M_2/FM_3 \eqno(15)
$$
induced by $M_1 \hookrightarrow M_2$ is zero.
Let $\mathcal L$ be the sheaf corresponding to the module
$M_1/(F,V)M_2$ and $U_2$ the one corresponding to $ VM_2/pM_3$. 
The invertible sheaf $U_2$ lives on $\mathcal{F}_2$, 
and $\mathcal{L}$ lives on $\mathcal{F}_1$ and  is invertible only
outside $\pi_1^{-1}(\mathcal{F}_2({\FF}_{p^2}))$.
Thus we work on the open set $\mathcal{F}_1^0$ that is the complement of
$\pi_1^{-1}(\mathcal{F}_2({\FF}_{p^2}))$.
The homomorphism (15) defines a homomorphism of sheaves
$$
\psi: \mathcal{L} \to \pi_1^*(U_2^{(p)})\, .
$$
The locus $\mathcal{H}_1$ can now be defined as the Zariski closure in
$\mathcal{F}_1$ of the zero locus $D(\psi)$ in $\mathcal{F}_1^0$
of the map $\psi$.

\begin{lemma}\label{Dpsi-class}
The cycle class of the Zariski closure in $\mathcal{F}_1$ 
of the zero locus $D(\psi)$ of $\psi$
equals
$$
[\overline{D(\psi)}]=p\, \ell_1-(p^2+1)\ell_2+e\, ,
$$ 
where $e$ is a class with support
in the fibres of $\pi_1$ over $\mathcal{F}_2({\FF}_{p^2})$.
\end{lemma}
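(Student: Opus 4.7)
The plan is to compute both $c_1(\mathcal{L})$ and $c_1(\pi_1^* U_2^{(p)})$ on the open locus $\mathcal{F}_1^0 := \mathcal{F}_1 \setminus \pi_1^{-1}(\mathcal{F}_2(\FF_{p^2}))$, where both sheaves are honest line bundles and $\psi$ is a nonzero map between them, so that $D(\psi)$ is a genuine divisor with class $c_1(\pi_1^* U_2^{(p)}) - c_1(\mathcal{L})$. Since ${\rm CH}^1(\mathcal{F}_1) \to {\rm CH}^1(\mathcal{F}_1^0)$ is surjective with kernel generated by divisors supported on the complement, the Zariski closure $[\overline{D(\psi)}]$ equals any lift of this open-part class up to a correction $e$ supported on $\pi_1^{-1}(\mathcal{F}_2(\FF_{p^2}))$; so all that is needed is the open-part computation.

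The target is immediate: $c_1(\pi_1^* U_2^{(p)}) = -p\ell_2$, from the sequence $0 \to U_2 \to VM_3/pM_3 \to Q_2 \to 0$ on $\mathcal{F}_2$ (whose middle term is trivial of rank $4$). For the source I will combine two short exact sequences on $\mathcal{F}_1^0$. From $pM_3 = FVM_3 = VFM_3$ and $FM_3, VM_3 \subset M_2$ one has $pM_3 \subset FM_2 \cap VM_2$, and a dimension count on $\mathcal{F}_1^0$ (where $a(Y_2)=3$) forces equality, so $(FM_2+VM_2)/FM_2 \cong VM_2/pM_3 = \pi_1^* U_2$ and
\[
0 \to \pi_1^* U_2 \to M_1/FM_2 \to \mathcal{L} \to 0.
\]
The inclusion $FM_2 \subset M_1 \subset M_2$ then yields
\[
0 \to M_1/FM_2 \to M_2/FM_2 \to M_2/M_1 \to 0,
\]
so that $c_1(\mathcal{L}) = c_1(M_2/FM_2) - c_1(M_2/M_1) + \ell_2$.

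For the two remaining Chern classes I will use the same K-theoretic mechanism that produces the $[\mathrm{Lie}(\mathcal{Y}_i)^\vee]$ formula of Section \ref{Hodge-bundle}. Multiplication by $p$ is an $\mathcal{O}$-linear isomorphism $M_2/M_1 \cong pM_2/pM_1$, and the identity $[pM_{i+1}/pM_i]=[Q_i^{(p)}]$ (at level $i=1$) gives $c_1(M_2/M_1) = p\ell_1$. Applying the analogous argument to the Dieudonn\'e module of $Y_2$ itself---the Frobenius-induced isomorphism $FM_2/pM_2 \cong (M_2/VM_2)^{(p)}$, the splittings $[M_2/pM_2]=[M_2/FM_2]+[FM_2/pM_2]=[M_2/VM_2]+[VM_2/pM_2]$, the vanishing $c_1(M_2/pM_2)=0$ in rational Chow (by $H^1_{\rm dR}$ self-duality under any isogeny $Y_2 \to Y_2^\vee$), and $c_1(VM_2/pM_2) = (p-1)\ell_2$ from the $[\mathrm{Lie}(\mathcal{Y}_i)^\vee]$ formula at $i=2$---gives $c_1(M_2/FM_2) = p(p-1)\ell_2$. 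Substituting,
\[
c_1(\mathcal{L}) = p(p-1)\ell_2 - p\ell_1 + \ell_2 = (p^2-p+1)\ell_2 - p\ell_1,
\]
whence $[\overline{D(\psi)}] = -p\ell_2 - c_1(\mathcal{L}) + e = p\ell_1 - (p^2+1)\ell_2 + e$, as claimed. The main delicate point will be the identification $(FM_2+VM_2)/FM_2 \cong \pi_1^* U_2$, which breaks down precisely on $\pi_1^{-1}(\mathcal{F}_2(\FF_{p^2}))$ where $FM_2 = VM_2$ collapses the cokernel; this failure is exactly what the correction class $e$ absorbs.
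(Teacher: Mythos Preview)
Your proof is correct and follows essentially the same strategy as the paper: restrict to $\mathcal{F}_1^0$, use the exact sequence $0 \to \pi_1^*U_2 \to M_1/FM_2 \to \mathcal{L} \to 0$ (valid there because $FM_2\cap VM_2=pM_3$ away from the $\FF_{p^2}$-points), compute the class of $\pi_1^*(U_2^{(p)})\otimes\mathcal{L}^{-1}$ by $K$-theory, and absorb the closure discrepancy into a class $e$ supported on the exceptional fibres. The only difference is bookkeeping: the paper observes directly that $M_1/FM_2 \cong U_1^{(p)}$ (via the $\sigma^{-1}$-linear isomorphism $V$, which carries $FM_2$ to $pM_2$), and then reads off $c_1$ from the already-established identity $[U_1]=[4]-[Q_2]+[Q_2^{(p)}]-[Q_1]$, arriving at $c_1\bigl((p+1)[U_2]-p[U_1]\bigr)=p\ell_1-(p^2+1)\ell_2$ in one step; you instead insert the extra filtration $M_1/FM_2 \subset M_2/FM_2$ and compute $c_1(M_2/FM_2)$ and $c_1(M_2/M_1)$ separately via self-duality of $H^1_{\rm dR}$ and the identification $pM_2/pM_1\cong Q_1^{(p)}$. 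Both routes unwind the same $K$-theoretic identities, and yours is perfectly sound; the paper's shortcut just avoids introducing $M_2/FM_2$ at all.
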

\begin{proof} We work on the open set $\mathcal{F}_1^0$ that is the complement of 
$\pi_1^{-1}(\mathcal{F}_2({\FF}_{p^2}))$.
Consider the exact sequence
$$
0 \to VM_2/FM_2\cap VM_2 \to M_1/FM_2 \to M_1/(F,V)M_2 \to 0\, .
$$
If $M_2$ is generated by $FM_3$ and $v_0=a_1x_1+a_2x_2+a_3x_3+ a_4x_4$ with $(a_1:a_2:a_3:a_4)$
determining a point 
not in $\mathcal{F}_2({\FF}_{p^2})$ then $FM_2 \cap VM_2=pM_3$ and
$$
VM_2/FM_2\cap VM_2 =VM_2/pM_3. 
$$
This translates into a short exact sequence of $\mathcal{O}_{\mathcal{F}_1^0}$-modules
$$
0 \to \pi_1^*(U_2) \to U_1^{(p)} \to \mathcal{L} \to 0 
$$
with $U_1$ the locally free $\mathcal{O}_{\mathcal{F}_{1}}$-module
determined by $VM_1/pM_2$.
We view $\psi$ as a section of
$\pi_1^*(U_2^{(p)})\otimes \mathcal{L}^{-1}$ on $\mathcal{F}_1^0$ with class
$(p+1)[U_2]-p[U_1]$. 
From Section \ref{Hodge-bundle} we use the identities
$[U_2]=[4]-[Q_2]$,
$[U_1]=[{\rm Lie}(Y_2)^{\vee}]-[Q_1]=[4]-[Q_2]+[Q_2^{(p)}]-[Q_1]$,
hence $c_1((p+1)[U_2]-p[U_1])=p\ell_1-(p^2+1)\ell_2$.
When taking the closure of the
degeneracy locus of $\psi$ we have to take into account a class with
support in the fibres over $\mathcal{F}_2({\FF}_{p^2})$ and the result follows.
\end{proof}

\begin{corollary} A locus $T$ of the second type and first sort as above 
maps dominantly to $\mathcal{F}_2$.
\end{corollary}
\begin{proof}
We claim that such a $T$ is not contained in the fibres over $\mathcal{F}_2({\FF}_{p^2})$.
Indeed, otherwise $T$ lives inside a ${\PP}^1$-bundle over the one-dimensional
locus in $\mathcal{F}_1$ defined by $(a_1:a_2:a_3:a_4)\in {\PP}^3({\FF}_{p^2})$
and $a_5=0$. This contradicts that the stratum of $S$ where $a\geq 2$ has dimension $3$,
see \cite[9.9]{L-O}. The claim implies 
that $T$ is contained in
$\overline{D(\psi)}$ and the class is given by Lemma \ref{Dpsi-class}.
The intersection number of $\overline{D(\psi)}$ with a generic fibre of
$\pi_1$ equals the degree of $\ell_1$ on such a fibre, that is, $1$.
That means that it intersects the generic fibre
of $\pi_1$ and is irreducible.
\end{proof}
The resulting abelian variety of a flag type with $M_1 \subset FM_3$ 
is defined by the filtration of Dieudonn\'e
modules $FM_3 \supset M_1 \supset M_0$. By forgetting $M_1$ between
$FM_3$ and $M_0$,  we have polarized flag types
$$
\begin{xy}
\xymatrix{
E^4=Z_1 \ar[r]^{\rho_1} \ar[d]^{\eta_1}  & Z_0 \ar[d]^{\cong} \\
E^4=Z_1^t & Z_0^t \ar[l] \\
}
\end{xy}
$$
with $\ker(\eta_1)=E^4[F]$ and $\ker(\rho_1)\cong \alpha_p^2$. The choice of
$\ker(\rho_1)$ in $E^4[F]$ defines a point in the Grassmann variety $G={\rm Gr}(2,4)$.
Note that $G$ can be identified with a quadric in ${\PP}^5$ in terms of Pl\"ucker coordinates.
If we choose a basis $x_1,x_2,x_3,x_4$ of the Dieudonn\'e module of $E^4$
with $(F-V)x_i=0$ and quasi-polarization with
$$
\langle x_i, p\, x_{5-j}\rangle = \delta_{ij}, \quad \langle x_i,Fx_j\rangle=0 \, ,
$$
then $M_0$ can be generated by two vectors $a=\sum_{i=1}^4 a_ix_i$,
$b=\sum_{i=1}^4 b_i x_i$ and the condition $\langle a,b\rangle \in W$ says
$$
a_1b_4-a_4b_1+a_2b_3-a_3b_2\equiv \, 0 \, (\bmod \, p)
$$
and this defines a hyperplane section $\mathcal{Q}=H \cap G$ of the Grassmann variety.
Indeed, with the Pl\"ucker coordinates $\lambda_{ij}=a_ib_j-a_jb_i$ the variety
$G$ is given by $\lambda_{12}\lambda_{34}-\lambda_{13}\lambda_{24}+\lambda_{14}\lambda_{23}=0$
and $H$ by $\lambda_{14}+\lambda_{23}=0$.

Recall that we interpret moduli in the stacky way meaning that we divide by
the automorphism groups of objects.  We summarize.

\begin{lemma}
The image in $S_4$ 
of an irreducible component of the locus of second type with $a\geq 2$ 
can be identified with (the quotient of) a hyperplane section $\mathcal{Q}$ 
of the Grassmann variety ${\rm Gr}(2,4)$.
\end{lemma}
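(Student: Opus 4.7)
The plan is to tie together the reductions already carried out in this subsection. Once $M_1$ is forgotten, the second-type condition $M_1\subset FM_3$ collapses the filtration to the single inclusion $M_0\subset FM_3$ with $\dim_k(FM_3/M_0)=2$; in contravariant Dieudonn\'e terms this is exactly the datum of an isogeny $\rho_1:E^4\to Z_0$ of degree $p^2$ whose kernel is an $\alpha_p^2$-subgroup of $E^4[F]\cong\alpha_p^4$, because $F$ kills $FM_3/M_0$. The principally polarized abelian variety $Z_0$ sitting in $S_4$ is recovered from $M_0$, and $M_0$ is in turn determined by $\ker\rho_1\subset E^4[F]$ together with the fixed polarization on $E^4$. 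Thus the image of the second-type locus in $S_4$ is parametrized by the $2$-planes in the $4$-dimensional $k$-vector space $E^4[F]$, i.e.\ by the Grassmannian $G={\rm Gr}(2,4)$, which sits as a smooth quadric in ${\PP}^5$ under its Pl\"ucker embedding.

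Next I would impose compatibility of $\rho_1$ with the principal polarization. Writing $M_0$ as the span (over $A$, modulo $FM_3$) of two vectors $a=\sum a_ix_i$ and $b=\sum b_ix_i$ in the chosen basis, the condition $M_0\subseteq M_0^t$ required by Definition \ref{pftq}(2), together with the quasi-polarization $\langle x_i,p\,x_{5-j}\rangle=\delta_{ij}$ and $\langle x_i,Fx_j\rangle=0$, reduces to the single linear relation $\langle a,b\rangle\in W$, which modulo $p$ reads
$$a_1b_4-a_4b_1+a_2b_3-a_3b_2\equiv 0\pmod{p}.$$
Under the Pl\"ucker substitution $\lambda_{ij}=a_ib_j-a_jb_i$ (with $G$ itself given by the Pl\"ucker quadric $\lambda_{12}\lambda_{34}-\lambda_{13}\lambda_{24}+\lambda_{14}\lambda_{23}=0$), this is the linear equation $\lambda_{14}+\lambda_{23}=0$, cutting out a hyperplane $H$ in ${\PP}^5$. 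Hence the parameter space of polarized $(\rho_1:E^4\to Z_0)$ is the hyperplane section $\mathcal{Q}=G\cap H$.

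What remains is to upgrade this parametrization to an identification. The natural map $\mathcal{Q}\to S_4$ sends a $2$-plane $\ker\rho_1\subset E^4[F]$ satisfying the hyperplane relation to the principally polarized abelian variety $Z_0=E^4/\ker\rho_1$; this is well defined and surjects onto the component in question, and on geometric points an inverse is given by recovering $\ker\rho_1$ from the induced isogeny $E^4\to Z_0$. I expect the main technical obstacle to be verifying that this bijection is an isomorphism of varieties rather than merely a finite surjection: one has to rule out hidden identifications on $\mathcal{Q}$ coming from automorphisms of $(E^4,\eta)$ that preserve the chosen component. In our setup, however, the superspecial $(E^4,\eta)$ sitting at the top of the flag tower is fixed together with its quasi-polarization, which rigidifies exactly these automorphisms; a direct check in the coordinates $(a_i,b_j)$ used above then shows no further identifications occur, completing the identification of the image in $S_4$ with $\mathcal{Q}$.
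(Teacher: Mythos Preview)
Your argument is correct and follows the same route as the paper: the paper's proof is the discussion immediately preceding the Lemma (introduced by ``We summarize''), which forgets $M_1$, passes to the short flag $(Z_1,\eta_1)\to Z_0$ with $\ker(\rho_1)\cong\alpha_p^2\subset E^4[F]$, reads this as a point of ${\rm Gr}(2,4)$, and then shows that the polarization condition $\langle a,b\rangle\in W$ becomes the Pl\"ucker hyperplane $\lambda_{14}+\lambda_{23}=0$. Your third paragraph on why the parametrization is actually an identification goes a bit beyond what the paper spells out; the paper is content to state the correspondence without addressing possible identifications by automorphisms.
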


We will analyze the case of loci of the second type contained in the fibres
over ${\FF}_{p^2}$-rational points on $\mathcal{F}_2$ in the next section.
\end{section}
\begin{section}{The fibres over $\mathcal{F}_2({\FF}_{p^2})$}
Here we study the fibre under $\mathcal{F}_0 \to \mathcal{F}_2$ 
of a rational point $\xi \in \mathcal{F}_2({\FF}_{p^2})$. 
Since $\mathcal{F}_0 \to \mathcal{F}_1$ is a ${\PP}^1$-bundle it suffices to study
the fibre under $\mathcal{F}_1 \to \mathcal{F}_2$.

The automorphism group ${\rm Aut}(\mathcal{F}_2)$ can be identified with the
quotient by its center of the general unitary group ${\rm GU}_4(p^2)$ of 
$4$-dimensional space 
over ${\FF}_{p^2}$ that fixes the Hermitian form
$$
\xi_1 \bar{\xi}_4-\xi_4\bar{\xi}_1+\xi_2\bar{\xi}_3-\xi_3\bar{\xi}_2
$$
where $\bar{\xi}=\xi^{p^2}$. By a theorem of Witt this group acts transitively
on isotropic subspaces of dimension $1$ and $2$. 
This implies that it acts transitively on the set of
lines of $\mathcal{F}_2$ and on the set of ${\FF}_{p^2}$-rational points, see \cite[Appendix]{K2017}.
We thus may restrict to analyzing the fibre over the point $(1:0:0:0)$ of $\mathcal{F}_2$.
This corresponds to the case with $M_2\subset M_3$ generated by $v_0=x_1$ and
$FM_3$. The fibre $\pi_1^{-1}(\xi)$ corresponds to the choices of $M_1$.
It can be given by a choice of basis
$$
v=a_5v_0+a_6Fx_2+a_7Fx_3+a_8Fx_4, \quad w= a_9Fx_2+a_{10}Fx_3+a_{11}Fx_4,
$$
satisfying the equations $g_1$, $g_2$ and $g_3$ of Section \ref{g=4flagtypes}
$$
a_8^p-a_5^{p-1}a_8=0, \quad a_{11}^p=0, \quad a_{11}=0 \, .
$$
We distinguish whether $a_5\neq 0$ or $a_5=0$. 

{\sl Case i).} $a_5\neq 0$. We may assume $a_5=1$ and find $a_8\in {\FF}_{p}$.
We can change basis of $M_3$ by $(x_1,x_2,x_3,x_4) \mapsto
(x_1+a_8Fx_4,x_2,x_3,x_4)$ and then may assume that $a_8=0$. Then
$M_1$ is generated inside $M_2$ by $FM_2=\langle Fx_1,F^2x_2,F^2x_3,F^2x_4\rangle$
and $v=x_1+a_6Fx_2+a_7Fx_3$ and $w=a_9Fx_2+a_{10}Fx_3$. We now
construct a flag of Dieudonn\'e modules
$$
F^2M_3^{\prime} \subset M_1^t \subset M_1 \subset FM_3^{\prime}
$$
with $M_3^{\prime}=\langle F^{-1}x_1,x_2,x_3,Fx_4\rangle$
and show that we can extend it to a flag type
$$
M_1 \subset FM_3^{\prime} \subset M_2^{\prime} \subset M_3^{\prime}
\eqno(16)
$$
so that we can associate to it a point of a locus of the second type as
treated in the Section \ref{g=4secondtype} with respect to a changed basis
$\langle F^{-1}x_1,x_2,x_3,Fx_4\rangle$ of $M_3$. To prove our claim
we have to construct $M_2^{\prime}$ with $(F,V)M_2^{\prime} \subset M_1$.
We take $v_0^{\prime}= \alpha_1 F^{-1}x_1+\alpha_2 x_2 +\alpha_3 x_3 + Fx_4$
and impose the following conditions
\begin{enumerate}
\item{} $\langle v_0',Fv_0'\rangle \in W$, that is,
$\alpha_1-\alpha_1^{p^2}+\alpha_2\alpha_3^{p^2}-\alpha_2^{p^2}\alpha_3=0$,
\item{} $Fv_0' \in M_1$, equivalently, there exists $\beta$ with
$Fv_0'=\alpha_1^pv +\beta w+ F^2x_4$, that is, $\alpha_2^p=\alpha_1^pa_6+\beta a_9$
and $\alpha_3^p=\alpha_1^pa_7+\beta a_{10}$,
\item{} $Vv_0'\in M_1$, equivalently, there exists $\gamma$ with
$Vv_0'=\alpha_1^{1/p}v+\gamma w +F^2x_4$, that is, 
$\alpha_2^{1/p}=\alpha_1^{1/p}a_6+\gamma a_9$ and $\alpha_3^{1/p}=\alpha_1^{1/p}a_7
+\gamma a_{10}$.
\end{enumerate}
For generic $a_i$ (that is, $a_7a_9-a_6a_{10}$ and $a_9a_{10}$ not in ${\FF}_{p^2}$)
we find a solution. 
We then set
$$
M_2'=Av_0'+FM_3'
$$
and then by (2) we have $\langle v,w,FM_2'\rangle=\langle v,w,FM_2\rangle=M_1$.
Thus we have a filtration (16) and 
it gives a point of a locus $\mathcal{H}_1$ with respect 
to the module $M_3'$. 

{\sl Case ii).} Here $a_5=0$. Then by $g_2$ we have $a_{11}=0$ and find that $M_1$
is generated by $v=a_6Fx_2+a_7Fx_3$, $w=a_9Fx_2+a_{10}Fx_3$ and $F^2M_3$, hence
$M_1=\langle Fx_1,Fx_2,Fx_3,F^2x_4\rangle$. So $M_1$ is fixed and this case thus yields one point.
Moreover $M_1^t=\langle Fx_1,F^2x_2,F^2x_3,F^2x_4\rangle$.  

We thus see that the supersingular abelian variety 
corresponding to a generic point of
an irreducible component $\mathcal{E}$ of the fibre over a rational point
$\xi \in \mathcal{F}_2({\FF}_{p^2})$ 
can be viewed as the supersingular abelian variety defined by a generic point
of a locus with $a\geq 2$ of the second kind with $M_1 \subset FM_3^{\prime}$.

This means that there is an irreducible component $S'$ of $S_4$
with model $\mathcal{F}_0^{\prime}$ 
and a locus $\mathcal{H}_0^{\prime}$ 
mapping dominantly to $\mathcal{F}_2^{\prime}$ such that image of $\mathcal{E}$
and $\mathcal{H}_0^{\prime}$ coincide in $S_4 \subset \mathcal{A}_4$. 

We summarize.

\begin{proposition} Let $S$ be a component of $S_4$ and $\mathcal{F}_0$ be the
model constructed in Section \ref{g=4flagtypes}.
The fibre in $\mathcal{F}_0$ over a rational point $\xi \in \mathcal{F}_2({\FF}_{p^2})$
consists of $p$ irreducible components. The image of each of these in $S_4$ is a hyperplane
section of the Grassmann variety ${\rm Gr}(2,4)$ and can be seen as the image of 
a locus of $a\geq 2$ of the second type in another component $S'$ of $S_4$.
\end{proposition}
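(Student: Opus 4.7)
The plan is to organize the explicit case analysis already begun above into two steps: first counting the irreducible components of the fibre, and then matching each one with a hyperplane section of ${\rm Gr}(2,4)$ arising from a locus of second type on a different model. By Witt's theorem, the transitivity of ${\rm Aut}(\mathcal{F}_2) = {\rm GU}_4(p^2)$ on $\mathcal{F}_2({\FF}_{p^2})$ reduces the first step to the case $\xi = (1:0:0:0)$ with $v_0 = x_1$. Substituting $a_1 = 1$, $a_2 = a_3 = a_4 = 0$ into the defining equations $g_1, g_2, g_3$ of Section~\ref{g=4flagtypes} leaves $a_8^p = a_5^{p-1} a_8$ together with $a_{11} = 0$. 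On the open locus $a_5 \neq 0$, normalizing $a_5 = 1$ yields $a_8 \in {\FF}_p$, producing $p$ disjoint sheets indexed by $a_8 \in {\FF}_p$; each is irreducible in the residual parameters $a_6, a_7, a_9, a_{10}$ modulo the admissible change of basis on $(v, w)$. The Case (ii) stratum $a_5 = 0$ forces $a_8 = 0$ and collapses to a single point lying in the closure of the $a_8 = 0$ sheet, so contributes no new component. Since $\pi_0 \colon \mathcal{F}_0 \to \mathcal{F}_1$ is a ${\PP}^1$-bundle, pullback preserves both the count and the irreducibility, giving exactly $p$ components of the fibre in $\mathcal{F}_0$ over $\xi$.

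For the second assertion, I would apply the change-of-basis construction of Case (i) from the analysis preceding the Proposition. For fixed $a_8 \in {\FF}_p$, the substitution $(x_1, x_2, x_3, x_4) \mapsto (x_1 + a_8 F x_4, x_2, x_3, x_4)$ brings the flag into the form with $a_8 = 0$; passing to the Dieudonn\'e module $M_3' = \langle F^{-1} x_1, x_2, x_3, F x_4 \rangle$, the chain $M_1 \subset F^2 M_3'$ extends to a rigid polarized flag $M_1 \subset F^2 M_3' \subset M_2' \subset M_3'$ via a choice of $v_0' = \alpha_1 F^{-1} x_1 + \alpha_2 x_2 + \alpha_3 x_3 + F x_4$ satisfying conditions (1)--(3) listed in the text. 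The resulting flag satisfies $M_1 \subset F M_3'$ by construction, hence defines a point of the locus of second type $\mathcal{H}_0'$ inside a component $S'$ of $S_4$ associated to $M_3'$. The induced abelian variety depends only on the sub-chain $M_0 \subset M_1 \subset F M_3 = F M_3'$, so the image in $\mathcal{A}_4 \otimes {\FF}_p$ of the $a_8$-component coincides with the image of $\mathcal{H}_0'$, which by the lemma of Section~\ref{section-a-greater-2} is a hyperplane section $\mathcal{Q} = H \cap {\rm Gr}(2,4)$ of the Grassmannian.

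The main obstacle is the solvability step for the coefficients $\alpha_1, \alpha_2, \alpha_3$: one must verify that conditions (1)--(3) are compatible on an open dense subset of each component so that the associated $\mathcal{H}_0' \to \mathcal{F}_2'$ is dominant, placing us in the dominant rather than the fibre-over-${\FF}_{p^2}$-points branch of Section~\ref{section-a-greater-2}, and that the new module $M_3'$ genuinely defines a component $S' \neq S$ of $S_4$. Both reduce to open-condition checks on the parameters $a_6, a_7, a_9, a_{10}$---the relevant genericity being, for instance, that $a_7 a_9 - a_6 a_{10}$ and $a_9 a_{10}$ do not lie in ${\FF}_{p^2}$---after which the lemma of Section~\ref{section-a-greater-2} directly yields the hyperplane-section conclusion.
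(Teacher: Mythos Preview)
Your proposal is correct and follows essentially the same route as the paper: the paper's argument for this proposition is the case analysis carried out in the text immediately preceding the statement, and you have reproduced it faithfully---the reduction to $\xi=(1:0:0:0)$ via Witt, the splitting into $p$ sheets by $a_8\in\FF_p$ on the locus $a_5\neq 0$, the collapse of Case~(ii) to a point, and the reinterpretation of each sheet via the module $M_3'=\langle F^{-1}x_1,x_2,x_3,Fx_4\rangle$ as a second-type locus on another component. Your flagged obstacles (solvability of the $\alpha_i$ system on a dense open set, and the genericity conditions $a_7a_9-a_6a_{10},\,a_9a_{10}\notin\FF_{p^2}$) are exactly the verifications the paper sketches rather than writes out in full, so you have identified the right residual checks.
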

\end{section}
\begin{section}{Superspecial points of $S_4$}
The number of points of $S_4$ representing isomorphism classes of superspecial
abelian varieties counted in the stacky sense was given 
in formula (2) in Section \ref{Space-Ag}  and equals
$$
\Sigma_4= (p-1)(p^2+1)(p^3-1)(p^4+1) \, v(4)\, .
$$
Each superspecial principally polarized abelian variety of dimension $4$ defines an
${\FF}_{p^2}$-rational point of $S_4\subset \mathcal{A}_4$. By Proposition
\ref{Ng} we have $N_4=(p^2-1)(p^6-1)\, v(4)$
irreducible components (again counted in the stacky sense) of $S_4$. Each irreducible component
is the image of $\mathcal{F}_0$ under a degree $p$ morphism in the stacky sense to its image in $S_4$ 
that induces a bijection between geometric points of the stacks on the open parts of $a$-number one.
\begin{lemma}
We have $ \# \mathcal{F}_0({\FF}_{p^2})=(p^2+1)^3(p^3+1)(p^4+1)$.
\end{lemma}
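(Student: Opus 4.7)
The plan is to count $\#\mathcal{F}_0({\FF}_{p^2})$ layer by layer along the fibration $\mathcal{F}_0\xrightarrow{\pi_0}\mathcal{F}_1\xrightarrow{\pi_1}\mathcal{F}_2$, and the numerology works out because $\mathcal{F}_2({\FF}_{p^2})$ coincides with ${\PP}^3({\FF}_{p^2})$: substituting $a_i\in{\FF}_{p^2}$ into the defining equation $f=a_1a_4^{p^2}-a_1^{p^2}a_4+a_2a_3^{p^2}-a_2^{p^2}a_3$ of $\mathcal{F}_2\subset{\PP}^3$ collapses it to $0$ identically, so $\#\mathcal{F}_2({\FF}_{p^2})=(p^2+1)(p^4+1)$.

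Next, I would count rational points in each fibre of $\pi_1$ over $\xi\in\mathcal{F}_2({\FF}_{p^2})$ using the factorisation $\pi_1:\mathcal{F}_1\to\tilde{\mathcal{F}}_2\to\mathcal{F}_2$ through the blow-up in all ${\FF}_{p^2}$-rational points. Every rational point of $\mathcal{F}_1$ over $\xi$ maps to one of the $p^2+1$ rational points on the exceptional ${\PP}^1$ above $\xi$. Using $GU_4(p^2)$-transitivity on $\mathcal{F}_2({\FF}_{p^2})$ one may reduce to $\xi=(1:0:0:0)$; the analysis in Section \ref{g=4flagtypes} then shows that the reduced fibre of $\mathcal{F}_1\to\tilde{\mathcal{F}}_2$ over any rational exceptional point is the union of the $p$ lines $a_8=\zeta a_5$ with $\zeta\in{\FF}_p$ in a projective plane, all defined over ${\FF}_p$ and concurrent at the rational point $(0:1:0)$. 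This contributes $p(p^2+1)-(p-1)=p^3+1$ rational points per fibre, giving $\#\pi_1^{-1}(\xi)({\FF}_{p^2})=(p^2+1)(p^3+1)$ and hence $\#\mathcal{F}_1({\FF}_{p^2})=(p^2+1)^2(p^3+1)(p^4+1)$.

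Finally, the short exact sequence $0\to U_0\to\pi_0^*K_1\to Q_0\to 0$ from Section \ref{Hodge-bundle}, with $K_1$ locally free of rank two on $\mathcal{F}_1$, identifies $\mathcal{F}_0$ with the projective bundle ${\PP}(K_1)\to\mathcal{F}_1$, whose fibres over rational points are ${\PP}^1$'s contributing $p^2+1$ rational points each. Multiplying yields $\#\mathcal{F}_0({\FF}_{p^2})=(p^2+1)^3(p^3+1)(p^4+1)$. The delicate step is the second one: one must verify that all $p$ components of the reduced fibre of $\mathcal{F}_1\to\tilde{\mathcal{F}}_2$ over a rational exceptional point are ${\FF}_{p^2}$-rational and share a rational common singular point; via $GU_4(p^2)$-transitivity this reduces to the factorisation of the Artin--Schreier-type polynomial $a_8^p-a_5^{p-1}a_8=a_8(a_8^{p-1}-a_5^{p-1})$ into ${\FF}_p$-linear factors.
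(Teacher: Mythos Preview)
Your proof is correct and follows essentially the same approach as the paper: count $\#\mathcal{F}_2({\FF}_{p^2})$, pass to the blow-up $\tilde{\mathcal{F}}_2$ (whose rational points all lie on exceptional curves), use that each fibre of $\mathcal{F}_1\to\tilde{\mathcal{F}}_2$ over such a point is a pencil of $p$ concurrent lines contributing $p^3+1$ rational points, and finish with the ${\PP}^1$-bundle structure of $\pi_0$. Your version is in fact more explicit than the paper's, which simply cites the count for $\mathcal{F}_2$ and asserts the fibre description; you supply the direct observation that $f$ vanishes identically on ${\FF}_{p^2}$-points, the arithmetic $p(p^2+1)-(p-1)=p^3+1$, and the factorisation $a_8^p-a_5^{p-1}a_8=\prod_{\zeta\in{\FF}_p}(a_8-\zeta a_5)$.
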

\begin{proof}
We have $\# \mathcal{F}_2({\FF}_{p^2})=(p^2+1)(p^4+1)$, see for example \cite{K2017}, hence
$\# \tilde{\mathcal{F}}_2({\FF}_{p^2})=(p^2+1)^2(p^4+1)$ and these points are the
${\FF}_{p^2}$-rational points on the exceptional curves of $\tilde{\mathcal{F}}_2$.
The fibre in $\mathcal{F}_1$ over a ${\FF}_{p^2}$-rational point of $\tilde{\mathcal{F}}_2$
consists of a union of $p$ lines through one point. So we find
$\# \mathcal{F}_1({\FF}_{p^2})=(p^2+1)^2(p^4+1)(p^3+1)$. Since $\mathcal{F}_0$ is a ${\PP}^1$-bundle
over $\mathcal{F}_1$ the formula follows.
\end{proof}
Let $J$ be the set of irreducible components of $S_4$ and for $j\in J$ we let $\mathcal{F}_0^j$
be the corresponding smooth model. The disjoint union of these smooth models has
$$
\# (\bigsqcup_{j\in J} \mathcal{F}_0^j)({\FF}_{p^2})= N_4 \, (p^2+1)^3(p^3+1)(p^4+1) 
$$
${\FF}_{p^2}$-rational points mapping to $\Sigma_4$ superspecial points of $S_4$.
The variety $\mathcal{F}_0$ contains $(p^2+1)(p^4+1)$ loci $\mathcal{G}_0^n$ of the
first kind, each isomorphic to $\mathcal{G}_0$. We have
$\# \mathcal{G}_1({\FF}_{p^2})=(p^2+1)(p^3+1)$ (see \cite{K2017})
and $\# \mathcal{G}_0({\FF}_{p^2})=(p^2+1)^2(p^3+1)$ since $\mathcal{G}_0$
is a ${\PP}^1$-bundle over $\mathcal{G}_1$.
On $\mathcal{F}_1$ these loci $\mathcal{G}_1$ of the first kind are disjoint and we
see
$$
\# \mathcal{F}_0({\FF}_{p^2})=(p^2+1)(p^4+1)\,  \# \mathcal{G}_0({\FF}_{p^2})\, .
$$
On each component $\mathcal{G}_0^n$ a section of $\mathcal{G}_0 \to \mathcal{G}_1$
is blown down. This section has $(p^2+1)(p^3+1)$ points rational over ${\FF}_{p^2}$.
\begin{lemma}
Each superspecial point of $S_4$ lies on $(p+1)(p^3+1)$ irreducible components of $S_4$.
\end{lemma}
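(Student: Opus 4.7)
The proof proceeds by a double counting argument on the incidence set
\[
\mathcal{I}=\{(z,S) : z \text{ a superspecial point of } S_4,\ S \text{ an irreducible component of } S_4 \text{ with } z\in S\},
\]
using the stacky counts $N_4$ and $\Sigma_4$ established in Proposition~\ref{Ng} and formula (2).

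The main task is to show that each irreducible component $S$ of $S_4$ contains exactly $(p^2+1)(p^4+1)$ superspecial points. Working with the smooth model $\mathcal{F}_0^j\to S$ from Section~\ref{g=4flagtypes}, I would realize these superspecial points as the images of the contracted sections of the $(p^2+1)(p^4+1)$ loci $\mathcal{G}_0^n$ of the first kind described in Section~\ref{section-a-greater-2}. Each $\mathcal{G}_0^n$ is a $\PP^1$-bundle over a surface $\mathcal{G}_1^n$, and along the distinguished section $\mathcal{G}_1^n$ the abelian variety $Z_0=Z_2/Z_2[F]$ is constant, so this section is contracted under $\mathcal{G}_0\to\mathcal{A}_4\otimes\FF_p$ to a single superspecial point of $S_4$. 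Distinct $\mathcal{G}_0^n$ correspond to distinct totally isotropic subspaces of $M_3/FM_3$ over $\FF_{p^2}$ (equivalently, distinct $\FF_{p^2}$-rational lines of $\mathcal{F}_2$), and the associated polarized $Z_0$'s are pairwise nonisomorphic, producing pairwise distinct superspecial points on $S$. That these $(p^2+1)(p^4+1)$ points exhaust the superspecial locus on $S$ is consistent with the preceding rational point count: the $(p^2+1)^3(p^3+1)(p^4+1)$ rational points of $\mathcal{F}_0^j$ partition through the $(p^2+1)(p^4+1)$ disjoint loci $\mathcal{G}_0^n$, each of which has $(p^2+1)^2(p^3+1)$ rational points, and only the section points are contracted to superspecial images.

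Given this, the two evaluations of $|\mathcal{I}|$ give
\[
N_4\cdot (p^2+1)(p^4+1) \;=\; \Sigma_4\cdot m,
\]
where $m$ is the (common) number of components through a given superspecial point. Substituting $N_4=(p^2-1)(p^6-1)\,v(4)$ and $\Sigma_4=(p-1)(p^2+1)(p^3-1)(p^4+1)\,v(4)$ yields
\[
m=\frac{N_4(p^2+1)(p^4+1)}{\Sigma_4}=\frac{(p^2-1)(p^6-1)}{(p-1)(p^3-1)}=(p+1)(p^3+1).
\]

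The principal obstacle is the first step: rigorously identifying the superspecial locus on each $S$ with the collection of $(p^2+1)(p^4+1)$ contracted sections, and in particular ruling out extra superspecial contributions from the $p^2$-fold residual rational points on each $\mathcal{G}_0^n$ not lying on $\mathcal{G}_1^n$. A secondary technical point is the constancy of $m$ across superspecial points, which one expects from the transitivity of the $\mathrm{GU}_4(p^2)$-action on the relevant strata; in any case the equality of averages suffices to establish the integer $(p+1)(p^3+1)$ as the generic incidence.
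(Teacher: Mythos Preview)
Your approach is genuinely different from the paper's, and it has a real gap.

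The paper's proof is a two-line direct argument: fixing a superspecial point $[X]$, the irreducible components of $S_4$ passing through it are in bijection with the totally isotropic $2$-dimensional subspaces of a $4$-dimensional Hermitian space over $\FF_{p^2}$, and one simply quotes that this count equals $(p+1)(p^3+1)$. No double counting is needed, and in fact the paper uses this lemma as an \emph{input} to verify the rational-point identity $N_4\,(p^2+1)^3(p^3+1)(p^4+1)=\Sigma_4\,(p+1)(p^2+1)^2(p^3+1)^2$, rather than deducing the lemma from such a count.

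Your double-counting argument, by contrast, rests entirely on the claim that each component $S$ contains exactly $(p^2+1)(p^4+1)$ superspecial points, and your justification for this is not sound. You assert that the $(p^2+1)(p^4+1)$ contracted sections of the loci $\mathcal{G}_0^n$ yield \emph{pairwise distinct} superspecial points of $S$, but you give no argument for this injectivity; the polarized $Z_0=Z_2/Z_2[F]$ attached to distinct totally isotropic planes of $M_3/FM_3$ are not obviously non-isomorphic. Indeed, the paper's own decomposition of the preimage of a superspecial point, namely $(p+1)(p^3+1)\times(p^2+1)(p^3+1)\times(p^2+1)$, indicates that within a single component the preimage involves a further factor of $p^2+1$ coming from multiple lines through a rational point of $\mathcal{F}_2$, so several $\mathcal{G}_0^n$'s can collapse to the same superspecial image. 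You also flag, correctly, that you have not shown the constancy of $m$ across superspecial points; ``the equality of averages'' does not suffice, since the statement to be proved is pointwise. The direct bijection in the paper avoids all of these difficulties.
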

\begin{proof}
The number of totally isotropic subspaces of dimension $2$ in a $4$-dimensional unitary
space over ${\FF}_{p^2}$ with conjugation given by Frobenius is  equal to $(p+1)(p^3+1)$. A choice of an irreducible component corresponds
exactly to the choice of a totally isotropic subspace.
\end{proof}
We thus see that under the natural map
$$
\bigsqcup_{j\in J} \mathcal{F}_0^j \longrightarrow S_4
$$
the inverse image of each of the $\Sigma_4$ superspecial points of $S_4$
has 
$$
(p+1)(p^3+1) \times (p^2+1)(p^3+1) \times (p^2+1)
$$ points, where the second factor corresponds to blowing down the section
of $\mathcal{G}_0 \to \mathcal{G}_1$, and the third one comes from the fact
that each exceptional curve on  $\tilde{\mathcal F}_2$ intersects $p^2+1$
proper images of the lines defined over ${\FF}_{p^2}$, in agreement with the formula
$$
N_4 \, (p^2+1)^3(p^3+1)(p^4+1)= \Sigma_4 \, (p+1)(p^2+1)^2(p^3+1)^2\, .
$$

\end{section}

\begin{section}{The cycle class of $S_4$ and intersection numbers}
In this section we express the cycle class of the supersingular
locus $S_4$ for dimension $g=4$ in terms of intersection numbers. 

We know that the cycle class of $S_4$ lies in the tautological ring
and is a multiple of $\lambda_4\lambda_2$. This multiple can be 
determined by intersection numbers. We identify the degree of
a top-dimensional Chern class with an intersection number.

\begin{proposition}\label{lambda4lambda2}
We have
$[S_4]= a \, \lambda_4\lambda_2$ with
$$
a= \frac{\lambda_3\lambda_1 \, [S_4]}{v(4)} = 
\frac{\lambda_1^4 [S_4]}{8\, v(4)}
$$ with
$v(4)$ the proportionality constant defined in Section \ref{Space-Ag}.
\end{proposition}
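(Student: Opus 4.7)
The plan is to invoke Theorem \ref{NPlociThm} to write $[S_4] = a\,\lambda_4\lambda_2$ for some $a\in\mathbb{Q}$ (a polynomial in $p$), and then extract $a$ in two different ways by pairing against classes of complementary degree. Since the tautological ring $R_4$ is Gorenstein with socle spanned by $\lambda_1\lambda_2\lambda_3\lambda_4$ of degree $v(4)$, any non-trivial pairing of $[S_4]$ with a class of degree $4$ determines $a$ uniquely.

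For the first formula I would multiply $[S_4]=a\,\lambda_4\lambda_2$ by $\lambda_3\lambda_1$ to get $\lambda_3\lambda_1[S_4] = a\,\lambda_1\lambda_2\lambda_3\lambda_4$. Taking degrees on $\tilde{\mathcal{A}}_4\otimes\mathbb{F}_p$ and using $\deg(\lambda_1\lambda_2\lambda_3\lambda_4)=v(4)$ recorded in Section \ref{Space-Ag} immediately yields $a = \lambda_3\lambda_1[S_4]/v(4)$.

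For the second formula the task reduces to showing the identity $\lambda_1^4\lambda_2\lambda_4 = 8\,\lambda_1\lambda_2\lambda_3\lambda_4$ in $R_4$. I would extract the required relations from
\[
(1+\lambda_1+\lambda_2+\lambda_3+\lambda_4)(1-\lambda_1+\lambda_2-\lambda_3+\lambda_4)=1
\]
by comparing homogeneous components: the degree $2$ part gives $\lambda_1^2 = 2\lambda_2$; the degree $4$ part gives $2\lambda_1\lambda_3 = \lambda_2^2 + 2\lambda_4$; and the top degree $8$ part on the left-hand side is $(-1)^4\lambda_4^2$, which must vanish, so $\lambda_4^2 = 0$. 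Combining these yields $\lambda_1^4 = 4\lambda_2^2 = 8\lambda_1\lambda_3 - 8\lambda_4$, and multiplying by $\lambda_2\lambda_4$ kills the $\lambda_2\lambda_4^2$ term, leaving the desired identity. Consequently $\lambda_1^4[S_4] = a\,\lambda_1^4\lambda_2\lambda_4 = 8a\,\lambda_1\lambda_2\lambda_3\lambda_4$, which after taking degrees gives $a = \lambda_1^4[S_4]/(8\,v(4))$.

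There is no substantive obstacle here: the proposition is a formal consequence of Theorem \ref{NPlociThm} together with the Gorenstein structure of $R_4$ and elementary manipulation of the defining relation. The only point needing a moment of care is the vanishing $\lambda_4^2 = 0$, which I would alternatively justify via the splitting principle: if $\ell_1,\ldots,\ell_4$ are the Chern roots of $\mathbb{E}$, the relation $c(\mathbb{E})c(\mathbb{E}^\vee)=1$ reads $\prod_i(1-\ell_i^2)=1$, whose top component $\ell_1^2\ell_2^2\ell_3^2\ell_4^2 = \lambda_4^2$ must vanish.
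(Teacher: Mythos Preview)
Your proof is correct and follows the same approach as the paper: invoke Theorem \ref{NPlociThm}, pair with $\lambda_3\lambda_1$, and then relate $\lambda_1^4$ to $8\lambda_1\lambda_3$ in the tautological ring. Your treatment is in fact more careful than the paper's, which simply asserts $\lambda_3\lambda_1=\lambda_1^4/8$ in $R_4$; strictly speaking one only has $\lambda_1^4=8\lambda_1\lambda_3-8\lambda_4$, and it is the vanishing $\lambda_4^2=0$ (which you supply) that makes the two expressions agree after multiplication by $[S_4]=a\,\lambda_4\lambda_2$.
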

\begin{proof}
We have $\lambda_3\lambda_1[S_4]=
a \, \lambda_4\lambda_3\lambda_2\lambda_1=a \, v(4)$. 
In the tautological ring $R_4$ we have $\lambda_3\lambda_1=\lambda_1^4/8$.
\end{proof}
We shall calculate the intersection number $[S]\cdot \lambda_3\lambda_1$
for each irreducible component $S$ of $S_4$. We will do this
by pulling back the Hodge bundle of $\mathcal{A}_4$ to 
$\mathcal{F}_0$ and calculating the degrees of the top Chern
classes of the Hodge bundle on $\mathcal{F}_0$. 

\end{section}
\begin{section}{Determination of intersection numbers}
Our goal is to calculate the intersection number 
$\lambda_1\lambda_3[S]$
for each irreducible component  
$S$ of the supersingular locus.
For this we calculate $\deg (\lambda_3\lambda_1)$ on the $4$-dimensional variety $\mathcal{F}_0$. 

By Proposition \ref{Chern-Hodge-bundle}, 
which describes the total Chern class of the Hodge bundle, and by Corollary \ref{Cor-c2Q1}
this intersection number can be expressed
in the intersection numbers given by the monomials of degree $4$ in $\ell_0,\ell_1,\ell_2$
evaluated at the fundamental class of $\mathcal{F}_0$. 
Note that we write $\ell_1$ and $\ell_2$ for their
pullbacks to $\mathcal{F}_0$ and sometimes identify such a monomial
$\ell_0^{a}\ell_1^b\ell_2^c$
with $\deg(\ell_0^{a}\ell_1^b\ell_2^c)$. 

\begin{lemma} \label{vanishing-monomials}
The following intersection numbers vanish on 
$\mathcal{F}_0$:
$$
\ell_0^4, \ell_0^2\ell_1^2, \ell_0^2\ell_1\ell_2, \ell_0^2\ell_2^2, \ell_0\ell_2^3,
\ell_1^4, \ell_1^3\ell_2, \ell_1^2\ell_2^2,
\ell_1\ell_2^3, \ell_2^4 \, .
$$ 
\end{lemma}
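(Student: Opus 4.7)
The plan is to exploit the tower $\mathcal{F}_0 \langepijl{\pi_0} \mathcal{F}_1 \langepijl{\pi_1} \mathcal{F}_2$ with $\dim\mathcal{F}_i = 4-i$, combined with three pullback facts: the class $\ell_2$ is pulled back from the surface $\mathcal{F}_2$, the class $\ell_1$ is pulled back from the threefold $\mathcal{F}_1$, and (by Corollary~\ref{Cor-c2Q1}) the class $\ell_0^2$ is itself a pullback from $\mathcal{F}_1$. Given these, all ten vanishings should reduce to the elementary observation that a cycle of codimension exceeding the dimension of the base is zero.

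I would group the monomials by the exponent of $\ell_0$. When $\ell_0$ does not appear, the monomials $\ell_1^4$, $\ell_1^3\ell_2$, $\ell_1^2\ell_2^2$, $\ell_1\ell_2^3$, $\ell_2^4$ are all $\pi_0$-pullbacks of codimension-$4$ cycles on the threefold $\mathcal{F}_1$, and hence vanish. When $\ell_0$ appears exactly once, the only relevant monomial is $\ell_0\ell_2^3$, and here $\ell_2^3$ already vanishes because it is the pullback of a codimension-$3$ class on the surface $\mathcal{F}_2$.

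For the monomials in which $\ell_0^2$ can be factored out, I would write $\ell_0^2 = \pi_0^*(\alpha)$ with $\alpha$ a codimension-$2$ class on $\mathcal{F}_1$; an explicit choice is $\alpha = 2\,c_2(Q_1) - \ell_1^2 + \ell_2^2$. Then for every monomial $P(\ell_1,\ell_2)$ homogeneous of degree $2$ one has $\ell_0^2\cdot P(\ell_1,\ell_2) = \pi_0^*(\alpha\cdot P(\ell_1,\ell_2))$, and $\alpha\cdot P$ has codimension $4$ on $\mathcal{F}_1$, hence is zero; this handles $\ell_0^2\ell_1^2$, $\ell_0^2\ell_1\ell_2$, and $\ell_0^2\ell_2^2$. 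Finally $\ell_0^4 = \pi_0^*(\alpha^2)$, and $\alpha^2$ is again a codimension-$4$ class on the three-dimensional $\mathcal{F}_1$, so $\ell_0^4 = 0$.

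The argument is essentially dimensional bookkeeping once the three pullback facts are on the table, and there is no substantive obstacle; the only point requiring a little care is to check that the identity $\ell_0^2 = \pi_0^*\alpha$ holds at the level of cycle classes on $\mathcal{F}_0$ (and not merely after further intersection), so that the projection-formula manipulation is legitimate. This is built into the proof of Corollary~\ref{Cor-c2Q1}, where the tautological identity $2\lambda_2 = \lambda_1^2$ on $\mathcal{A}_4$ is used as an equality in the Chow ring of $\mathcal{F}_0$.
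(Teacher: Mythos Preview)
Your proposal is correct and follows essentially the same approach as the paper: both arguments rest on the three pullback facts ($\ell_2$ from the surface $\mathcal{F}_2$, $\ell_1$ from the threefold $\mathcal{F}_1$, and $\ell_0^2$ from $\mathcal{F}_1$ via Corollary~\ref{Cor-c2Q1}) and then reduce each vanishing to a dimension count. The only difference is organizational---you group the monomials by the exponent of $\ell_0$, whereas the paper groups them by which pullback fact is invoked---and you spell out the explicit class $\alpha = 2\,c_2(Q_1)-\ell_1^2+\ell_2^2$ where the paper just cites the corollary.
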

\begin{proof}
Since $\dim \mathcal{F}_1=3$ and $\ell_0^2$ is a pullback from $\mathcal{F}_1$ by Corollary \ref{c2Q1},
and $\ell_1$ and $\ell_2$ are also pullbacks from $\mathcal{F}_1$ we find
that $\ell_0^4, \ell_0^2\ell_1^2, \ell_0^2\ell_1\ell_2, \ell_0^2\ell_2^2$
vanish. 

Since the class $\ell_2$ is a pullback from $\mathcal{F}_2$, which is of dimension $2$,
we have $\ell_2^3=0$, implying that $\ell_2^4=\ell_0\ell_2^3=\ell_1\ell_2^3=0$.
Similarly, $\ell_2$ and $\ell_1$ are induced from $\mathcal{F}_1$, which is of dimension
$3$, hence the monomials of degree $4$ in $\ell_1$ and $\ell_2$ vanish.
\end{proof}

Proposition \ref{Chern-Hodge-bundle} together with Lemma \ref{vanishing-monomials}
implies the following relation.

\begin{corollary} \label{lambda3lambda1S}
We have on $\mathcal{F}_0$
$$
\deg(\lambda_3\lambda_1)= \frac{1}{2}(p-1)^4 
\left(\ell_0^3\ell_1+\ell_0^3\ell_2+ \ell_0\ell_1^3+ 3\, \ell_0\ell_1^2\ell_2 + 3\, \ell_0\ell_1\ell_2^2\right)\, .
$$ 
\end{corollary}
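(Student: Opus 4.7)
The plan is to obtain the formula by a direct expansion of the Chern classes of the Hodge bundle from Proposition \ref{Chern-Hodge-bundle} and then reduce modulo the vanishing relations of Lemma \ref{vanishing-monomials}.

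First I would extract $\lambda_1$ and $\lambda_3$ from the explicit formula
$$
c({\EE})=\frac{(1-\ell_2)(1+p\, \ell_1+p^2c_2(Q_1))(1+p\, \ell_0)}{(1-p\, \ell_2)(1+\ell_1+c_2(Q_1))(1+\ell_0)}\, .
$$
The degree-$1$ part reproduces the known identity $\lambda_1=(p-1)(\ell_0+\ell_1+\ell_2)$ used already in the proof of Corollary \ref{Cor-c2Q1}. For $\lambda_3$, I would expand each factor in the numerator and the inverses $(1-p\ell_2)^{-1}$, $(1+\ell_1+c_2(Q_1))^{-1}$, $(1+\ell_0)^{-1}$ as power series truncated at degree $3$, multiply them out, and collect the degree-$3$ homogeneous component. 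This produces $\lambda_3$ as a polynomial in $\ell_0,\ell_1,\ell_2$ and $c_2(Q_1)$.

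Next I would substitute the identity $c_2(Q_1)=(\ell_0^2+\ell_1^2-\ell_2^2)/2$ from Corollary \ref{Cor-c2Q1}, so that $\lambda_3$ becomes a homogeneous polynomial of degree $3$ in $\ell_0,\ell_1,\ell_2$ alone. Multiplying by $\lambda_1=(p-1)(\ell_0+\ell_1+\ell_2)$ yields a homogeneous polynomial of degree $4$, and I would then reduce it modulo the ten vanishing monomials listed in Lemma \ref{vanishing-monomials}. Only the five monomials $\ell_0^3\ell_1,\ \ell_0^3\ell_2,\ \ell_0\ell_1^3,\ \ell_0\ell_1^2\ell_2,\ \ell_0\ell_1\ell_2^2$ survive, and the remaining task is to collect coefficients and check that the overall factor is $(p-1)^4/2$ with the asserted coefficients $1,1,1,3,3$.

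The main obstacle is not conceptual but arithmetic bookkeeping: the expansion of $\lambda_3$ contains many terms with factors $p^k$ arising from the geometric series of $(1-p\ell_j)^{-1}$ versus $(1-\ell_j)^{-1}$, and the substitution $c_2(Q_1)=(\ell_0^2+\ell_1^2-\ell_2^2)/2$ mixes $\ell_0^2$-terms (which vanish upon multiplication by $\ell_1,\ell_2$, by the list in Lemma \ref{vanishing-monomials}) into what remains. A useful sanity check is to set $p=1$, in which case $\lambda_1=0$ and the expression must vanish identically; the appearance of $(p-1)^4$ (two factors from $\lambda_1$ type contributions and two from $\lambda_3$) confirms that every monomial should carry this common factor. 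The symmetry between the roles of $\ell_0$ and the pair $(\ell_1,\ell_2)$ — $\ell_0$ being the fiber direction of $\pi_0$ while $\ell_1,\ell_2$ come from the base — explains why $\ell_0$ appears in every surviving monomial, consistent with Lemma \ref{vanishing-monomials}. Once these reductions are carried out, the stated formula follows.
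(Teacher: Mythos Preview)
Your proposal is correct and follows exactly the approach indicated in the paper: the corollary is stated there as an immediate consequence of Proposition~\ref{Chern-Hodge-bundle} (together with Corollary~\ref{Cor-c2Q1}) and Lemma~\ref{vanishing-monomials}, and your plan to expand $c(\EE)$, substitute $c_2(Q_1)=(\ell_0^2+\ell_1^2-\ell_2^2)/2$, form $\lambda_3\lambda_1$, and reduce modulo the ten vanishing monomials is precisely how that implication is carried out. The only quibble is your heuristic attribution of the $(p-1)^4$ factor (``two from $\lambda_1$ and two from $\lambda_3$''), which is not quite the right accounting, but this is an aside and does not affect the argument.
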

Thus we need the intersection numbers defined by the five monomials in $\ell_0, \ell_1, \ell_2$
appearing in Corollary \ref{lambda3lambda1S}.

The intersection numbers 
$(\ell_0\ell_1^3,  \ell_0\ell_1^2\ell_2, \ell_0\ell_1\ell_2^2)$ on $\mathcal{F}_0$
are equal to the intersection numbers 
$(\ell_1^3, \ell_1^2\ell_2, \ell_1\ell_2^2)$ on $\mathcal{F}_1$ as the degree
of $\ell_0$ on a generic fibre of $\pi_0$ is $1$.
\begin{lemma}\label{ell1ell2sq}
We have $\deg \ell_1\ell_2^2= p^2(p^2+1)$ on $\mathcal{F}_1$.
\end{lemma}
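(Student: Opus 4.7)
The plan is to evaluate $\deg_{\mathcal{F}_1}\ell_1\ell_2^2$ by factoring it through the projection $\pi_1\colon\mathcal{F}_1\to\mathcal{F}_2$: first compute $\ell_2^2$ on $\mathcal{F}_2$, then pull it back to $\mathcal{F}_1$ and intersect with $\ell_1$ on a generic fibre.

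First I would show that $\ell_2=c_1(Q_2)$ coincides with the restriction to $\mathcal{F}_2$ of the hyperplane class $H$ of $\mathbb{P}^3=\mathbb{P}(M_3/FM_3)$. Since $M_2=FM_3+A v_0$, the rank-$3$ quotient $M_3/M_2=(M_3/FM_3)/\langle v_0\rangle$ is the tautological quotient bundle of $\mathbb{P}^3$ restricted to $\mathcal{F}_2$, with first Chern class $H$; the identification $Q_2\cong M_3/M_2$ coming from $V$ (handled with the Chern-class convention that produced $\deg\ell_1=p+1$ on the Fermat curve $\mathcal{X}_{p+1}$ in the $g=3$ computation of Section~8) gives $\ell_2=H|_{\mathcal{F}_2}$. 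Since the equation defining $\mathcal{F}_2\subset\mathbb{P}^3$ has degree $p^2+1$, this yields
\[
\deg_{\mathcal{F}_2}\ell_2^2 = p^2+1.
\]

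Next I would transfer this to $\mathcal{F}_1$. As observed in the proof of Lemma~\ref{degree-p}, the equation $g_2=0$ cutting out the scheme-theoretic fibre $\pi_1^{-1}(y)$ is the $p$-th power of the simpler equation $g_2'$, so $\pi_1^{-1}(y)=p\cdot[F_y]$ in $A_1(\mathcal{F}_1)_{\mathbb{Q}}$, where $F_y$ is the reduced, irreducible, cuspidal rational curve. Hence
\[
\pi_1^*\ell_2^2 \;=\; p(p^2+1)\,[F_y].
\]

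Finally, by the lemma at the end of Section~10, $\mathcal{F}_1$ sits as a divisor of fibrewise degree $p$ inside the $\mathbb{P}^2$-bundle $\mathbb{P}(B)$, with $B=M_2/\tilde L$ and tautological line $M_1/\tilde L$. Consequently $M_2/M_1\cong B/(M_1/\tilde L)$ is the tautological rank-$2$ quotient on $\mathbb{P}(B)$, and under $Q_1\cong M_2/M_1$ the restriction $\ell_1|_{F_y}$ is the hyperplane class of $\mathbb{P}(B_y)=\mathbb{P}^2$ intersected with the degree-$p$ plane curve $F_y$, giving $\deg(\ell_1|_{F_y})=p$. Combining,
\[
\deg_{\mathcal{F}_1}\ell_1\ell_2^2 \;=\; p(p^2+1)\cdot p \;=\; p^2(p^2+1).
\]
The main delicacies are keeping track of the $\sigma^{-1}$-linear twist of $V$ when passing from $M_{i+1}/M_i$ to $Q_i=VM_{i+1}/VM_i$—done consistently with the convention set in the $g=3$ proof—and correctly extracting the multiplicity $p$ in $\pi_1^{-1}(y)$ from the inseparability of $\pi_1$.
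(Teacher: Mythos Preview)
Your proof is correct and follows essentially the same route as the paper: identify $\ell_2$ with the hyperplane class on the degree-$(p^2+1)$ surface $\mathcal{F}_2\subset\mathbb{P}^3$ so that $\deg\ell_2^2=p^2+1$, pick up a factor of $p$ from the inseparability of $\pi_1$ when pulling back a point, and then use $\deg(\ell_1|_{F_y})=p$ on the reduced fibre. The only cosmetic difference is that the paper invokes unirationality of $\mathcal{F}_2$ to move the zero-cycle $h^2$ to $(p^2+1)$ copies of a single generic point, whereas you work directly with a generic fibre; and you justify $\deg(\ell_1|_{F_y})=p$ via the embedding of $\mathcal{F}_1$ as a degree-$p$ divisor in $\mathbb{P}(B)$, which the paper simply asserts.
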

\begin{proof}
The space $\mathcal{F}_2$ can be identified with the surface in ${\PP}^3$ over
${\FF}_p$ given by the equation 
$$
x_1 x_4^{p^2}-x_1^{p^2}x_4 + x_2x_3^{p^2}-x_2^{p^2}x_3 =0 
$$
and $\ell_2$ is represented by the pullback under $\pi_1$ of the hyperplane class $h$ on 
$\mathcal{F}_2$. Therefore $h^2$
can be represented by an effective zero cycle of degree $p^2+1$. The surface
$\mathcal{F}_2$ is unirational (see \cite{K2017}), hence $h^2$ can be
represented by $p^2+1$ times a point. The morphism 
$\pi_1$ is inseparable of degree $p$, hence the pullback of
a point $\mathcal{F}_2$ is $p$ times a fibre of $\mathcal{F}_1$.
Since the degree of $\ell_1$ on a fibre of $\pi_1$ is $p$ we get
$\deg(\ell_1\ell_2^2)=p\cdot p \cdot (p^2+1)$. 
\end{proof}
\begin{lemma}\label{relation-lambda4}
We have on $\mathcal{F}_0$ the relation
$$
p\, \ell_0^3\ell_1-(p^2+1)\, \ell_0^3\ell_2 + p\, \ell_0\ell_1^3 - (p-1)^2 \, \ell_0\ell_1^2\ell_2 -
(2p^2-p+2)\ell_0\ell_1\ell_2^2
=0 \, .
$$
\end{lemma}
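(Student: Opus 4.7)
The plan is to derive the relation by computing $\lambda_4 = c_4(\mathbb{E})$ on $\mathcal{F}_0$ and observing that this class must vanish. The excerpt records in Section~\ref{Space-Ag} that the tautological ring of $\mathcal{A}_g \otimes \mathbb{F}_p$ equals $R_g/(\lambda_g)$, so the image of $\lambda_g$ in $\mathrm{CH}^*_{\mathbb{Q}}(\mathcal{A}_g \otimes \mathbb{F}_p)$ is zero. Since $\mathcal{F}_0$ admits a natural morphism to $S_4 \subset \mathcal{A}_4 \otimes \mathbb{F}_p$, the pullback of $\lambda_4$ to $\mathcal{F}_0$ vanishes in $\mathrm{CH}^4_{\mathbb{Q}}(\mathcal{F}_0)$.

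Concretely, I would work with the rational expression for $c(\mathbb{E})$ given in Proposition~\ref{Chern-Hodge-bundle}. Setting $e := c_2(Q_1)$ and writing $P = (1-\ell_2)(1+p\ell_1+p^2 e)(1+p\ell_0)$ and $Q = (1-p\ell_2)(1+\ell_1+e)(1+\ell_0)$, the identity $c(\mathbb{E}) \cdot Q = P$ determines $\lambda_1, \lambda_2, \lambda_3, \lambda_4$ by matching coefficients degree-by-degree. The degrees 1 and 2 just recover $\lambda_1=(p-1)(\ell_0+\ell_1+\ell_2)$ and $\lambda_2=\lambda_1^2/2$ as in the proof of Corollary~\ref{Cor-c2Q1}. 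Iterating this recursion gives $\lambda_4$ as a polynomial in $\ell_0, \ell_1, \ell_2, e$.

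I would then substitute $e = (\ell_0^2+\ell_1^2-\ell_2^2)/2$ from Corollary~\ref{Cor-c2Q1} and reduce modulo the ten relations of Lemma~\ref{vanishing-monomials}. Only the five monomials $\ell_0^3\ell_1, \ell_0^3\ell_2, \ell_0\ell_1^3, \ell_0\ell_1^2\ell_2, \ell_0\ell_1\ell_2^2$ can survive. Setting the reduced polynomial equal to zero and matching the coefficients of these five monomials should produce exactly the coefficients $p$, $-(p^2+1)$, $p$, $-(p-1)^2$, $-(2p^2-p+2)$ claimed in the lemma.

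The main obstacle is the algebraic bookkeeping of this multiplication. A substantial shortcut is to work modulo the ideal generated by the vanishing monomials from the outset. In particular, $e^2$ is a linear combination of $\ell_0^4, \ell_0^2\ell_1^2, \ell_1^4, \ell_0^2\ell_2^2, \ell_1^2\ell_2^2, \ell_2^4$, all of which vanish, so $e^2 \equiv 0$ modulo the vanishing ideal. Together with $\ell_2^3 = 0$, this collapses most cross terms in the expansion of $P$ and $Q$ before they contribute, reducing the matching of coefficients on the five surviving degree-4 monomials to a short computation.
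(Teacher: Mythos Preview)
Your approach is exactly the paper's: it derives the relation by using that $\lambda_4$ vanishes in $\mathrm{CH}^*_{\mathbb{Q}}(\mathcal{A}_4\otimes\mathbb{F}_p)$ (Section~\ref{Space-Ag}) together with the expression for $c(\mathbb{E})$ from Proposition~\ref{Chern-Hodge-bundle} and the formula for $c_2(Q_1)$ from Corollary~\ref{Cor-c2Q1}. Your explicit use of Lemma~\ref{vanishing-monomials} and the observation $e^2\equiv 0$ modulo that ideal simply spell out the bookkeeping that the paper leaves implicit.
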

\begin{proof} This follows from the fact that $\lambda_4$ vanishes
in the Chow ring of $\mathcal{A}_g$ as explained in Section \ref{Space-Ag} and the expression for $\lambda_4$
as a polynomial in the $\ell_i$ by Proposition \ref{Chern-Hodge-bundle} and Corollary 
\ref{Cor-c2Q1}.
\end{proof}
\begin{corollary}\label{relation-lambda4-by ell0}
On $\mathcal{F}_1$ we have the relation
$$
p\, \ell_0^2\ell_1-(p^2+1)\, \ell_0^2\ell_2 + p\, \ell_1^3 - (p-1)^2 \, \ell_1^2\ell_2 -
(2p^2-p+2)\ell_1\ell_2^2
=0 \, .
$$
\end{corollary}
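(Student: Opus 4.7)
The plan is to deduce the corollary by pushing forward Lemma \ref{relation-lambda4} along the $\mathbb{P}^1$-bundle $\pi_0\colon \mathcal{F}_0 \to \mathcal{F}_1$. The key observation is that every monomial appearing in the Lemma has $\ell_0$ as a factor, so the Lemma can be rewritten on $\mathcal{F}_0$ as
\[
\ell_0 \cdot \bigl(p\,\ell_0^2\ell_1 - (p^2+1)\,\ell_0^2\ell_2 + p\,\ell_1^3 - (p-1)^2\,\ell_1^2\ell_2 - (2p^2-p+2)\,\ell_1\ell_2^2\bigr) = 0.
\]

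Next I would check that the bracketed expression is the pullback under $\pi_0$ of the class on $\mathcal{F}_1$ that the corollary asserts to vanish. The classes $\ell_1$ and $\ell_2$ are pullbacks from $\mathcal{F}_1$ (and from $\mathcal{F}_2$, composed with $\pi_1$) by the conventions adopted just before Proposition \ref{Chern-Hodge-bundle}, and Corollary \ref{Cor-c2Q1} gives that $\ell_0^2$ is itself a pullback from $\mathcal{F}_1$. Hence the bracketed expression equals $\pi_0^*(\bar\alpha)$ for the class $\bar\alpha$ on $\mathcal{F}_1$ with the same formal shape.

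Finally, because $\pi_0\colon \mathcal{F}_0\to \mathcal{F}_1$ is a $\mathbb{P}^1$-bundle (as recorded explicitly in the section on fibres over $\mathcal{F}_2(\mathbb{F}_{p^2})$) and $\ell_0=c_1(Q_0)$ restricts to $\mathcal{O}(1)$ on each fibre — exactly as in the $g=3$ analysis where the identity $\deg(\ell_0\ell_1)=p+1$ was obtained — one has $\pi_{0*}(\ell_0)=1$ in $\mathrm{CH}^0(\mathcal{F}_1)$. By the projection formula,
\[
\bar\alpha \;=\; \pi_{0*}(\ell_0)\cdot \bar\alpha \;=\; \pi_{0*}\bigl(\ell_0\cdot \pi_0^*\bar\alpha\bigr) \;=\; \pi_{0*}(0) \;=\; 0,
\]
which is exactly the relation claimed by the corollary. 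There is no real obstacle here; the only point that needs to be verified carefully is the pullback structure of $\ell_0^2$, and this is supplied by Corollary \ref{Cor-c2Q1}. Everything else is the projection formula applied to a $\mathbb{P}^1$-bundle.
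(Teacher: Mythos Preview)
Your proof is correct and takes essentially the same approach as the paper's. The paper phrases the argument via the injectivity of $\xi_1 \mapsto \pi_0^*(\xi_1)\,\ell_0$ coming from the projective-bundle decomposition $A_k(\mathcal{F}_0) = \pi_0^* A_{k-1}(\mathcal{F}_1) \oplus \pi_0^* A_k(\mathcal{F}_1)\cdot\ell_0$, whereas you apply the equivalent pushforward via the projection formula with $\pi_{0*}(\ell_0)=1$; both hinge on Corollary~\ref{Cor-c2Q1} to see that the bracketed factor is a pullback.
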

\begin{proof}
We know that $\mathcal{F}_0$ is a ${\PP}^1$-bundle over $\mathcal{F}_1$. Therefore
each cycle class $\xi \in A_k(\mathcal{F}_0)$, the dimension $k$ Chow group of $\mathcal{F}_0$, 
can be written uniquely as $\xi=\pi_{0}^*(\xi_0)+
\pi_{0}^*(\xi_1)\ell_0$ with 
$\xi_0\in A_{k-1}(\mathcal{F}_1)$ and $\xi_1 \in A_k(\mathcal{F}_1)$.
In particular, the map $\xi_1 \mapsto \pi_{0}^*(\xi_1)\ell_0$ is injective. The result thus follows 
from Lemma \ref{relation-lambda4}.
\end{proof}
\begin{lemma}\label{second-relation}
We have on $\mathcal{F}_1$ the relation
$$
2\, \ell_0^2\ell_1-(p-1) \ell_0^2\ell_2 +(p-1)\ell_1^2\ell_2 -2(p^2-p+1) \ell_1\ell_2^2 =0\, .
$$
\end{lemma}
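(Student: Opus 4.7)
The plan is to exploit the fact that $U_1$ is a rank-two vector bundle on $\mathcal{F}_1$, so its third Chern class must vanish; writing out $c_3(U_1)=0$ in terms of the $\ell_i$ yields precisely the stated relation.

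First, I would invoke the $K$-theoretic identity from Section \ref{Hodge-bundle}, namely $[U_1]=[{\rm Lie}(\mathcal{Y}_2)^{\vee}]-[Q_1]$ in $K_0(\mathcal{F}_1)$, which gives
\[
c(U_1)=\frac{c({\rm Lie}(\mathcal{Y}_2)^{\vee})}{c(Q_1)}.
\]
Applying the same flag-type recursion used in the proof of Proposition \ref{Chern-Hodge-bundle}, but stopping at $\mathcal{Y}_2$ rather than $\mathcal{Y}_0$ (and using that ${\rm Lie}(\mathcal{Y}_3)^{\vee}$ is trivial of rank $4$), one finds $c({\rm Lie}(\mathcal{Y}_2)^{\vee})=(1-\ell_2)/(1-p\ell_2)$. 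Combined with $c(Q_1)=1+\ell_1+c_2(Q_1)$ and with the explicit formula $c_2(Q_1)=(\ell_0^2+\ell_1^2-\ell_2^2)/2$ from Corollary \ref{Cor-c2Q1}, this provides a rational expression for $c(U_1)$ in terms of the classes $\ell_0,\ell_1,\ell_2$.

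Next, I would expand this expression up to codimension $3$, using that $\mathcal{F}_1$ is three-dimensional (so codimension-$4$ classes vanish) and that $\ell_2^3=0$ because $\ell_2$ is pulled back from the surface $\mathcal{F}_2$. Extracting the codimension-$3$ piece gives
\[
c_3(U_1)=2\ell_1 c_2(Q_1)-\ell_1^3-(p-1)\ell_2 c_2(Q_1)+(p-1)\ell_1^2\ell_2-p(p-1)\ell_1\ell_2^2.
\]
Since $U_1$ has rank $2$, the left-hand side is zero. Substituting the expression for $c_2(Q_1)$, the $\ell_1^3$ contributions from $2\ell_1 c_2(Q_1)$ cancel against the explicit $-\ell_1^3$ term, leaving a relation in $\ell_0^2\ell_1,\ \ell_0^2\ell_2,\ \ell_1^2\ell_2,\ \ell_1\ell_2^2$ only. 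Clearing the factor of $1/2$ then produces exactly
\[
2\ell_0^2\ell_1-(p-1)\ell_0^2\ell_2+(p-1)\ell_1^2\ell_2-2(p^2-p+1)\ell_1\ell_2^2=0.
\]

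The main conceptual step is simply recognizing that $U_1$ has rank $2$, so that $c_3(U_1)$ is forced to vanish; after that the proof is mechanical. The only real bookkeeping hazard is the expansion of $(1+\ell_1+c_2(Q_1))^{-1}$ modulo codimension-$4$ classes, because $c_2(Q_1)$ itself has codimension $2$ and one must be careful to retain the cross term $2\ell_1 c_2(Q_1)$ while discarding $c_2(Q_1)^2$ and $\ell_1^2 c_2(Q_1)$.
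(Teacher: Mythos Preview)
Your proof is correct and follows essentially the same approach as the paper: both exploit the vanishing of the third Chern class of a rank-two bundle whose total Chern class is $(1-\ell_2)(1-p\ell_2)^{-1}(1+\ell_1+c_2(Q_1))^{-1}$. The paper introduces this bundle as a new sheaf $A$ via the exact sequence $0 \to A \to VM_2/pM_2 \to VM_2/VM_1 \to 0$, whereas you correctly identify it as the already-defined $U_1$ (indeed $A = VM_1/pM_2 = U_1$), which is arguably cleaner.
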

\begin{proof}
We have the exact sequence of Dieudonn\'e modules
$$
0 \to A \to VM_2/pM_2 \to VM_2/VM_1 \to 0\, 
$$
with ${\rm Lie}(Y_2)^\vee = VM_2/pM_2$ and $Q_1 = VM_2/VM_1$.
The total Chern class of the sheaf corresponding to $A$ has the form
$$
c(A)=(1-\ell_2)(1-p\, \ell_2)^{-1}(1+\ell_1+c_2(Q_1))^{-1}\, .
$$
Since ${\rm rank}(A)=2$ the third Chern class should vanish; this gives
a relation on $\mathcal{F}_1$
$$
2\ell_0^2\ell_1-(p-1) \ell_0^2\ell_2 +(p-1)\ell_1^2\ell_2 -2(p^2-p+1) \ell_1\ell_2^2 =0\, .
$$
\end{proof}

\begin{lemma}\label{third-relation}
On $\mathcal{F}_1$ we have the relation
$$
p\, \ell_0^2\ell_2-p\, \ell_1^2\ell_2+2(p^2-p+1)\ell_1\ell_2^2=0 \, .
$$
\end{lemma}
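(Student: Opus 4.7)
The plan is to reduce the target identity to a cleaner form using the degree-$2$ relation from Corollary \ref{Cor-c2Q1} and then verify the reduced form geometrically. Substituting $\ell_0^2 = 2c_2(Q_1) - \ell_1^2 + \ell_2^2$ from Corollary \ref{Cor-c2Q1} and using $\ell_2^3 = 0$ on $\mathcal{F}_1$ (since $\ell_2$ is pulled back from the $2$-dimensional Hermitian surface $\mathcal{F}_2$), the target identity is equivalent, after dividing by $2$, to
\[
p\, c_2(U_1)\cdot\ell_2 \;=\; \ell_1\cdot\ell_2^2
\]
on $\mathcal{F}_1$, where $U_1$ is the rank-$2$ sub-bundle from the short exact sequence $0 \to U_1 \to {\rm Lie}(Y_2)^\vee \to Q_1 \to 0$ used in Lemma \ref{second-relation}. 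By Lemma \ref{ell1ell2sq} we have $\ell_1\ell_2^2 = p^2(p^2+1)$, so the goal reduces to the intersection number $c_2(U_1)\cdot\ell_2 = p(p^2+1)$. Using $\ell_2 = \pi_1^*h$ together with $h^2 = p^2+1$ on $\mathcal{F}_2$, the projection formula further reduces this to the identity
\[
\pi_{1*}c_2(U_1) \;=\; p\,h
\]
in the rational Picard group of $\mathcal{F}_2$.

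To establish this pushforward identity, I would interpret $c_2(U_1)$ as the restriction of the Schubert class $\sigma_{1,1}$ from the relative Grassmannian $G = {\rm Gr}(2,{\rm Lie}(Y_2)^\vee) \to \mathcal{F}_2$. The sub-bundle $Q_2^{(p)} = pM_3/pM_2 \subset {\rm Lie}(Y_2)^\vee$ has rank $3$, and the Schubert condition $U_1 \subset Q_2^{(p)}$ translates, via $V^{-1}(pM_3) = FM_3$, into $M_1 \subset FM_3$---which is precisely the locus $\mathcal{H}_1$ of Section \ref{section-a-greater-2}. By Lemma \ref{Dpsi-class}, its closure in $\mathcal{F}_1$ has class $p\ell_1 - (p^2+1)\ell_2 + e$, with $e$ supported on fibers over the $\FF_{p^2}$-rational points of $\mathcal{F}_2$, so $\pi_{1*}e = 0$; combined with $\pi_{1*}\ell_1 = p$ and $\pi_{1*}\ell_2 = 0$, this yields a naive contribution of $p^2[\mathcal{F}_2]$.

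The main obstacle is the excess intersection: $\mathcal{H}_1$ has codimension $1$ in $\mathcal{F}_1$, while the Schubert class $\sigma_{1,1}$ has codimension $2$, so $c_2(U_1)$ does not coincide with $[\overline{\mathcal{H}_1}]$ directly but differs by a contribution coming from the excess normal bundle of $\mathcal{H}_1$ inside the Schubert cycle. The key technical step is to carry out this excess intersection computation, which should convert the naive factor $p^2$ into the required $p\,h$. An alternative approach, closer in spirit to Lemma \ref{second-relation}, would be to locate another rank-$2$ bundle on $\mathcal{F}_1$---for instance one arising from the Frobenius-twisted exact sequence $0 \to Q_1^{(p)} \to {\rm Lie}(Y_1)^\vee \to U_1 \to 0$ together with the vanishing $\lambda_4(Y_1)=0$---whose vanishing third Chern class yields the reformulated identity (and hence (C)) directly, bypassing the excess intersection altogether.
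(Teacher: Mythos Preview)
Your reduction of the target identity to $p\,c_2(U_1)\cdot\ell_2 = \ell_1\cdot\ell_2^2$ is correct, and the further reformulation via the projection formula is sensible. However, the argument is genuinely incomplete. You correctly identify the obstacle --- the locus $\{U_1 \subset Q_2^{(p)}\} = \{M_1 \subset FM_3\}$ has codimension~$1$ in $\mathcal{F}_1$ rather than the expected codimension~$2$ --- but you do not carry out the excess intersection computation, merely asserting that it ``should convert the naive factor $p^2$ into the required $p\,h$.'' That is precisely the content of the lemma and cannot be left as a hope. Your alternative suggestion, invoking the sequence $0 \to Q_1^{(p)} \to {\rm Lie}(Y_1)^\vee \to U_1 \to 0$ together with ``$\lambda_4(Y_1)=0$,'' does not work as stated: on the $3$-fold $\mathcal{F}_1$ the class $c_4({\rm Lie}(Y_1)^\vee)$ vanishes for dimension reasons alone and hence yields no relation, and $Y_1$ is not principally polarized, so no nontrivial vanishing of $\lambda_4$ is available from $\mathcal{A}_4$.

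The paper's route is different and more direct. It restricts to $\pi_1^{-1}(H)$ for a hyperplane section $H \subset \mathcal{F}_2$ chosen to avoid all ${\FF}_{p^2}$-rational points. On that locus $FM_2 \cap VM_2 = pM_3$, so $M_2/(F,V)M_2$ is locally free of rank~$3$ and the quotient $L = M_1/(F,V)M_2$ is an honest \emph{line bundle}. One computes $[L]$ in $K_0$ from the filtrations and then the trivial vanishing $c_2(L)=0$ gives a degree-$2$ relation among $\ell_0^2,\ell_1,\ell_2$ on $\pi_1^{-1}(H)$; multiplying by $\ell_2$ (the class of $H$) and using $\ell_2^3=0$ produces the lemma on all of $\mathcal{F}_1$. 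The idea you are missing is precisely this rank-$1$ sheaf $L$: by passing to the open locus where the flag is generic, the excess phenomenon you encountered simply disappears, and a rank constraint on a line bundle replaces any Schubert or excess-intersection analysis.
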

\begin{proof}
Let $H$ be a hyperplane section of $\mathcal{F}_2$ with $H\cap \mathcal{F}({\FF}_{p^2})=\emptyset$.
We work on $\pi_1^{-1}(H)$. Here we have that $\dim M_2/(F,V)M_2=3$ and we thus have a rank~$3$
locally free sheaf $B$ on $H$ determined by $M_2/(F,V)M_2$. Because of the exact sequence
$$
0 \to VM_2/VM_2 \cap FM_2 \to M_2/FM_2 \to M_2/(F,V)M_2 \to 0
$$
we have the exact sequence
$$
0 \to U_2 \to {{\rm Lie}(Y_2)^{(p)}}^{\vee} \to B \to 0\, ,
$$
since $VM_2/VM_2 \cap FM_2 = VM_2/pM_3$. 
We thus find
$$
[B]=[4]-[Q_2^{(p)}]+[Q_2^{(p^2)}]-[U_2]=[4]+[U_2^{(p)}]-[U_2^{(p^2)}]-[U_2]\, .
$$
We also have the inclusions $(F,V)M_2 \subset M_1 \subset M_2$ on $\pi_1^{-1}(H)$ and we thus have
a locally free sheaf $L$ corresponding to $M_1/(F,V)M_2$. In the Grothendieck group we have the
corresponding relation $[B]=[L]+[Q_1^{(p)}]$. Thus we find $[L]=[4]+[U_2^{(p)}]-[U_2^{(p^2)}]-[U_2]-[Q_1^{(p)}]$
and we see that the total Chern class of $L$ is given by
$$
c(L)=\frac{(1-p\ell_2)}{(1-p^2\ell_2)(1-\ell_2)}
\, \frac{1}{(1+p\ell_1+p^2c_2(Q_1))}\, .
$$
But $L$ has rank $1$, so $c_2(L)=0$. With $c_2(Q_1)=(\ell_0^2+\ell_1^2-\ell_2^2)/2$ this gives
$$
(p^4-p^3+\frac{3}{2}p^2-p+1)\ell_2^2-(p^3-p^2+p)\ell_1\ell_2-\frac{1}{2}p^2\ell_0^2
+\frac{1}{2}p^2\ell_1^2=0 \, . 
$$ 
Recall now that the class of $H$ is $\ell_2$. Multiplying the preceding relation by $\ell_2$
and using $\ell_2^3=0$ we find 
$$
p\ell_0^2\ell_2-p\ell_1^2\ell_2+2(p^2-p+1)\ell_1\ell_2^2=0 \, .
$$
\end{proof}
As remarked above we need five intersection numbers:
$$
\ell_0^3\ell_1,\, \ell_0^3\ell_2,\, \ell_0\ell_1^3,\, \ell_0\ell_1^2\ell_2,\, 
\ell_0\ell_1\ell_2^2\, .
$$
We know already the last one by Lemma \ref{ell1ell2sq}.
By multiplying the relations of Lemma \ref{second-relation} and \ref{third-relation} 
 by $\ell_0$ 
we find in total three relations coming from Lemmas 
\ref{relation-lambda4}, \ref{second-relation}
and \ref{third-relation}
 between these five intersection numbers.

\begin{corollary}
We have $\deg(\ell_0^3\ell_1)= p(p^2+1)(p^2-p+1)$.
\end{corollary}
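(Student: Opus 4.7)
The plan is to reduce the claim to the previously computed intersection number $\deg(\ell_1\ell_2^2)=p^2(p^2+1)$ of Lemma \ref{ell1ell2sq}, combined with the two relations of Lemmas \ref{second-relation} and \ref{third-relation}. First I would observe that since $\mathcal{F}_0\to\mathcal{F}_1$ is a $\mathbb{P}^1$-bundle on whose fibers $\ell_0$ has degree one, the projection formula gives $\deg_{\mathcal{F}_0}(\ell_0\cdot\pi_0^*\alpha)=\deg_{\mathcal{F}_1}(\alpha)$ for any three-dimensional class $\alpha$ on $\mathcal{F}_1$. In particular, $\deg(\ell_0\ell_1\ell_2^2)=p^2(p^2+1)$ is immediate.

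Next, I would multiply both relations of Lemmas \ref{second-relation} and \ref{third-relation}, which are identities in the codimension-three part of $\mathrm{CH}^*(\mathcal{F}_1)$, by $\ell_0$ to obtain two identities in the top-degree Chow group of the four-fold $\mathcal{F}_0$:
\begin{align*}
2\ell_0^3\ell_1-(p-1)\ell_0^3\ell_2+(p-1)\ell_0\ell_1^2\ell_2-2(p^2-p+1)\ell_0\ell_1\ell_2^2&=0,\\
p\,\ell_0^3\ell_2-p\,\ell_0\ell_1^2\ell_2+2(p^2-p+1)\ell_0\ell_1\ell_2^2&=0.
\end{align*}
The five monomials of Corollary \ref{lambda3lambda1S} appear here except for $\ell_0\ell_1^3$, and the miraculous feature is that the two variables $\ell_0^3\ell_2$ and $\ell_0\ell_1^2\ell_2$ occur in the second relation with the same coefficient ratio as in the first. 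Consequently, multiplying the second by $(p-1)/p$ and adding it to the first simultaneously eliminates both unwanted monomials.

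This collapses the system to
$$
2\,\ell_0^3\ell_1=\frac{2(p^2-p+1)}{p}\,\ell_0\ell_1\ell_2^2,
$$
and substituting $\deg(\ell_0\ell_1\ell_2^2)=p^2(p^2+1)$ yields the claimed formula $\deg(\ell_0^3\ell_1)=p(p^2+1)(p^2-p+1)$. No appeal to Lemma \ref{relation-lambda4} or Corollary \ref{relation-lambda4-by ell0} is needed here; those will serve instead to resolve the remaining three monomials $\ell_0^3\ell_2$, $\ell_0\ell_1^3$, $\ell_0\ell_1^2\ell_2$ individually. The main obstacle is bookkeeping rather than geometric content: one must verify that the coefficients arrange themselves so that two of the five unknowns can be eliminated from just two equations, which is precisely what happens thanks to the shape of the relations coming from $c_3$ of the rank-two quotient and $c_2$ of the rank-one sheaf $L$ in the proofs of Lemmas \ref{second-relation} and \ref{third-relation}.
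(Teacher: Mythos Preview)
Your proposal is correct and follows essentially the same approach as the paper: the paper forms $p$ times the relation of Lemma \ref{second-relation} plus $(p-1)$ times that of Lemma \ref{third-relation} (after multiplying both by $\ell_0$), which is just $p$ times your combination, and then substitutes $\deg(\ell_0\ell_1\ell_2^2)=p^2(p^2+1)$ exactly as you do. The observation that $\ell_0^3\ell_2$ and $\ell_0\ell_1^2\ell_2$ drop out simultaneously is implicit in the paper's one-line proof.
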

\begin{proof}
The sum of $p$ times the relation of \ref{second-relation} and
$(p-1)$ times that of \ref{third-relation} 
gives the relation 
$2p\, \ell_0^3\ell_1-2(p^2-p+1)\ell_0\ell_1\ell_2^2=0$.
\end{proof}

Using the three relations and
Lemma \ref{ell1ell2sq}  our five intersection numbers 
depend on one unknown.
\begin{corollary}\label{one-unknown}
With $x=\deg(\ell_0\ell_1^2\ell_2)$ we find that 
$$
\deg \left[ 
\begin{matrix} \ell_0^3\ell_1\\ \ell_0^3\ell_2\\ \ell_0\ell_1^3\\ \ell_0\ell_1^2\ell_2\\ \ell_0\ell_1\ell_2^2 \\
\end{matrix} \right] =
 \left[ 
\begin{matrix}
p(p^2+1)(p^2-p+1)\\ x-2p(p^2+1)(p^2-p+1)\\
2(p-1+1/p)x-(p^2+1)^2(2p^2-3p+2)\\ x \\ p^2(p^2+1) \\
\end{matrix} \right] \, .
$$
\end{corollary}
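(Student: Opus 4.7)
The corollary is fundamentally a linear-algebra exercise on $\mathcal{F}_0$: the five intersection numbers in play form a $5$-dimensional vector space of unknowns, and we already have at our disposal one concrete numerical value (from Lemma \ref{ell1ell2sq}) together with three independent linear relations (from Lemmas \ref{relation-lambda4}, \ref{second-relation}, and \ref{third-relation}). Since these data cut the solution down to a one-parameter family, the plan is simply to solve the resulting system and express everything in terms of the free parameter $x = \deg(\ell_0\ell_1^2\ell_2)$.

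First I would lift the relations of Lemmas \ref{second-relation} and \ref{third-relation} from $\mathcal{F}_1$ to $\mathcal{F}_0$ by multiplying through by $\ell_0$; this loses no information by the projective-bundle-formula injectivity already invoked in the proof of Corollary \ref{relation-lambda4-by ell0}. Together with Lemma \ref{relation-lambda4}, which already lives on $\mathcal{F}_0$, this supplies three linear equations in the five monomials listed in Corollary \ref{lambda3lambda1S}. The entry $\deg(\ell_0\ell_1\ell_2^2) = p^2(p^2+1)$ is then immediate from Lemma \ref{ell1ell2sq}, since $\ell_0$ has degree $1$ on each fibre of $\pi_0$.

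Next I would reproduce the derivation already sketched by the authors: forming $p$ times the lifted second-relation plus $(p-1)$ times the lifted third-relation causes the $\ell_0^3\ell_2$ and $\ell_0\ell_1^2\ell_2$ contributions to cancel, leaving $2p\,\ell_0^3\ell_1 = 2(p^2-p+1)\ell_0\ell_1\ell_2^2$ and hence $\deg(\ell_0^3\ell_1) = p(p^2+1)(p^2-p+1)$. The lifted relation of Lemma \ref{third-relation}, namely
$$
p\,\ell_0^3\ell_2 - p\,\ell_0\ell_1^2\ell_2 + 2(p^2-p+1)\ell_0\ell_1\ell_2^2 = 0,
$$
then solves directly for $\ell_0^3\ell_2 = x - 2p(p^2+1)(p^2-p+1)$.

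The final step is to substitute the three values just obtained into the relation of Lemma \ref{relation-lambda4} and solve for the remaining monomial $\ell_0\ell_1^3$. After collecting terms, the coefficient of $x$ works out to $\bigl[(p^2+1) + (p-1)^2\bigr]/p = 2(p^2-p+1)/p = 2(p-1+1/p)$, while the constant part combines to $-(p^2+1)^2(2p^2-3p+2)$, giving the last row of the table. There is no real obstacle here; the computation is entirely routine once the correct relations are assembled, and the only subtlety is the bookkeeping when lifting relations between the Chow groups of $\mathcal{F}_0$, $\mathcal{F}_1$, and $\mathcal{F}_2$, which is handled by the successive projective-bundle and generically-finite structure of the tower $\mathcal{F}_0 \to \mathcal{F}_1 \to \mathcal{F}_2$.
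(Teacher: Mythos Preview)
Your proposal is correct and follows essentially the same approach as the paper: the paper also assembles the three relations from Lemmas \ref{relation-lambda4}, \ref{second-relation}, \ref{third-relation} (the latter two multiplied by $\ell_0$) together with Lemma \ref{ell1ell2sq} and the already-established value of $\ell_0^3\ell_1$, and solves the resulting linear system. One small expository quibble: when passing from $\mathcal{F}_1$ to $\mathcal{F}_0$ by multiplying by $\ell_0$, you do not need the injectivity of Corollary \ref{relation-lambda4-by ell0} (that argument goes the other way); what you actually use is the push-pull identity $\deg_{\mathcal{F}_0}(\ell_0\cdot\pi_0^*\alpha)=\deg_{\mathcal{F}_1}(\alpha)$ for top-degree $\alpha$, which is immediate from the ${\PP}^1$-bundle structure.
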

\begin{remark}
We have on $\mathcal{F}_0$
$$
\deg \lambda_1^4 = 8\, (p-1)^4 (p^2+p+1)\left(\frac{\deg(\ell_0\ell_1^2\ell_2)}{p} -(p^2+1)(p-1)^2\right) \, .
$$
Since $\lambda_1$ is ample on $S_4$ this should be positive and this gives
$$
\deg (\ell_0\ell_1^2\ell_2) > p(p^2+1)(p-1)^2 \, .
$$
\end{remark}
We now determine the last intersection number.
Recall that the second Chern class $c_2(Q_1)$ satisfies $c_2(Q_1)=(\ell_0^2+\ell_1^2-\ell_2^2)/2$.
Furthermore, recall the cycle class $[\overline{D(\psi)}]$
of a `horizontal' $a\geq 3$-locus on $\mathcal{F}_1$
given by 
$$
[\overline{D(\psi)}]=p\, \ell_1-(p^2+1)\ell_2+e
$$
with $e$ a class with support in the exceptional fibres as
given in Lemma \ref{Dpsi-class}.
\begin{proposition}
We have $c_2(Q_1)\cdot [\overline{D(\psi)}]=0$ and $c_2(Q_1)\cdot e=0$.
\end{proposition}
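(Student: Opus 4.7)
The plan is to exploit a structural factorization of $c_2(Q_1)$ on the degeneracy divisor $\overline{D(\psi)}$. On this locus, the defining condition $M_1 \subset FM_3$ gives, upon applying $V$, the inclusion $VM_1 \subset VFM_3 = pM_3$; combined with $pM_3 \subset VM_2$ (which always holds, since $FM_3 \subset M_2$), this yields a chain $VM_1 \subset pM_3 \subset VM_2$ and hence a short exact sequence of locally free sheaves on $\overline{D(\psi)}$,
\[
0 \to pM_3/VM_1 \to Q_1 \to \pi_1^*U_2 \to 0,
\]
whose quotient is pulled back from $\mathcal{F}_2$. Writing $N = pM_3/VM_1$ and recalling that $c_1(\pi_1^*U_2) = -\ell_2$ in the sign convention of Proposition \ref{class of EE} (since $[U_2] = [4] - [Q_2]$), we deduce $c_1(N) = \ell_1 + \ell_2$ and therefore
\[
c_2(Q_1)|_{\overline{D(\psi)}} = -(\ell_1 + \ell_2)\,\ell_2.
\]

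For the second identity $c_2(Q_1)\cdot e = 0$, I observe that the support of $e$ is contained in $\pi_1^{-1}(\mathcal{F}_2({\FF}_{p^2}))\cap\overline{D(\psi)}$. For any irreducible divisor $D$ of $\mathrm{supp}(e)$, $D$ lies in a fibre of $\pi_1$ over a rational point, so $\ell_2|_D = 0$ because $\ell_2$ is pulled back from $\mathcal{F}_2$. The factorization above then gives $c_2(Q_1)|_D = -(\ell_1+\ell_2)|_D\cdot\ell_2|_D = 0$, and summing over components yields $c_2(Q_1)\cdot e = 0$.

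For the first identity, the factorization gives $c_2(Q_1)\cdot[\overline{D(\psi)}] = -(\ell_1+\ell_2)\ell_2\cdot[\overline{D(\psi)}]$ as a class in $CH^3(\mathcal{F}_1)$. Expanding against $[\overline{D(\psi)}] = p\ell_1 - (p^2+1)\ell_2 + e$ and using $\ell_2^3 = 0$ (since $\ell_2$ is pulled back from the surface $\mathcal{F}_2$) together with $\ell_2\cdot e = 0$, this reduces to the numerical identity
\[
p\,\ell_1^2\ell_2 = (p^2 - p + 1)\,\ell_1\ell_2^2.
\]
To establish this, I would analyze $\overline{D(\psi)}$ componentwise: dominant components are birational to $\mathcal{F}_2$ by Section \ref{section-a-greater-2} (since $\overline{D(\psi)}$ meets a generic fibre of $\pi_1$ in a single point), and on them the condition $M_1 \subset FM_3$ generically determines $M_1$ from $M_2$, allowing identification of $c_1(N)$ via the Frobenius-semilinear isomorphism $V\colon FM_3/M_1 \to pM_3/VM_1$ together with a canonical trivialization of $FM_3/M_1$ coming from the explicit description of flag types of the second kind; on components contained in fibres over rational points, $\ell_2$ vanishes and contributes nothing.

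The main technical obstacle, I expect, is the precise identification of $c_1(N)$ on the dominant components of $\overline{D(\psi)}$ with the correct Frobenius-twist bookkeeping; once this is done, the projection formula for $\pi_1|_{\overline{D(\psi)}}\colon\overline{D(\psi)} \to \mathcal{F}_2$ combined with the known degree $\ell_2^2 = p^2+1$ on $\mathcal{F}_2$ (from the proof of Lemma \ref{ell1ell2sq}) will complete the numerical identity.
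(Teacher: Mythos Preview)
Your factorization $c_2(Q_1)\big|_{\overline{D(\psi)}}=-(\ell_1+\ell_2)\ell_2$ is correct: on $\overline{D(\psi)}$ the condition $M_1\subset FM_3$ is closed, so $VM_1\subset pM_3\subset VM_2$ and the short exact sequence for $Q_1$ does hold there. However, the argument breaks down at the second identity.

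Your claim that the support of $e$ lies in $\pi_1^{-1}(\mathcal{F}_2({\FF}_{p^2}))\cap\overline{D(\psi)}$ is false. By definition $e=[\overline{D(\psi)}]-\bigl(p\ell_1-(p^2+1)\ell_2\bigr)$ is only known to be supported in the full $2$-dimensional fibres $\pi_1^{-1}(\xi)$ over rational points $\xi$; there is no reason for it to be supported on $\overline{D(\psi)}$. In fact the analysis in Section~13 shows that over a rational point $\xi$ the condition $M_1\subset FM_3$ (i.e.\ $a_5=0$) singles out a \emph{single point} of the $2$-dimensional fibre $\pi_1^{-1}(\xi)$, so $\overline{D(\psi)}$ meets that fibre only in dimension~$0$. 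Consequently your factorization of $c_2(Q_1)$ is unavailable on the support of $e$, and the step ``$c_2(Q_1)|_D=-(\ell_1+\ell_2)\ell_2|_D=0$'' is unjustified for a general component $D$ of $e$.

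For the first identity you reduce to the numerical relation $p\,\ell_1^2\ell_2=(p^2-p+1)\,\ell_1\ell_2^2$ on $\mathcal{F}_1$. This relation is true, but you do not prove it; your sketch (identifying $c_1(N)$ via a Frobenius-twisted trivialization on dominant components) is exactly the delicate bookkeeping you flag as the main obstacle, and it is not clear it can be completed without input equivalent to what you are trying to show. Note that you cannot invoke the table of intersection numbers in Corollary~\ref{intersection-numbers}, since that corollary is a consequence of the present Proposition.

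The paper's proof is entirely different and avoids both difficulties. It realizes $c_2(Q_1)$ geometrically as the locus where the tautological subbundle $U_1\subset M_2/FM_2$ contains a generically chosen vector $v'$. Choosing $v'$ with $\alpha_5\neq 0$ forces this representing cycle to be disjoint from $\overline{D(\psi)}$ (which is characterized by $a_5=0$), and an additional genericity condition on the remaining coefficients keeps it away from the fibres over $\mathcal{F}_2({\FF}_{p^2})$ as well, giving both vanishings at once with no numerical computation.
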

\begin{proof}
Since $Q_1$ is the tautological quotient of the
$O_{{\mathcal F}_1}$-module associated to $M_2/FM_2$ 
by the universal rank $2$ subbundle $U_1$, the second Chern class
can be realized as the class of the locus where the fibre of $U_1$ contains a
fixed vector. For this we choose an element $v'$ of $M_2/FM_2$ that has the property
that over each affine part of $\mathcal{F}_2$ with $a_i\neq 0$ (for $i=1,\ldots,4$)
it is of the form
$$
v'=\alpha_5\, v_0+ \alpha_6 \, Fx_2+ \alpha_7 Fx_3+ \alpha_8 Fx_4
$$
with the property that the equation $g_2=0$, that is,
$$
a_1 \, \alpha_8^p-a_1^p\alpha_5^{p-1}\alpha_8 + a_2 \alpha_7^p-a_2^p \alpha_5^{p-1}\alpha_7
+a_3^p\alpha_5^{p-1}\alpha_6-a_3\alpha_6^p=0
$$
has no solutions with $(a_1,a_2,a_3,a_4)\in {\FF}_{p^2}$ with $a_i\neq 0$. Indeed, choosing
$\alpha_5 \neq 0$, $\alpha_6$ and $\alpha_7$ there are only finitely many $\alpha_8$ satisfying
this equation. Then since $\alpha_5\neq 0$,
 we see that this locus has zero intersection with
$\overline{D(\psi)}$. We get $c_2(Q_1)\cdot [\overline{D(\psi)}]=0$. 
By the requirement that we
put over $\mathcal{F}_2({\FF}_{p^2})$ we see that also $c_2(Q_1)\cdot e=0$.
\end{proof}
\begin{corollary}\label{final-stone}
We have $(\ell_0^2+\ell_1^2-\ell_2^2)(p\ell_1-(p^2+1)\ell_2)=0$.
\end{corollary}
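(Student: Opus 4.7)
The plan is to read off the corollary as an immediate algebraic consequence of the preceding proposition, using only the two identities already established in the paper:
\begin{enumerate}
\item[(a)] $c_2(Q_1)=(\ell_0^2+\ell_1^2-\ell_2^2)/2$, from Corollary \ref{Cor-c2Q1};
\item[(b)] $[\overline{D(\psi)}]=p\,\ell_1-(p^2+1)\ell_2+e$ with $e$ supported over $\mathcal{F}_2(\mathbb{F}_{p^2})$, from Lemma \ref{Dpsi-class}.
\end{enumerate}

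First I would substitute (b) into the proposition's identity $c_2(Q_1)\cdot[\overline{D(\psi)}]=0$ to obtain
$$
c_2(Q_1)\cdot\bigl(p\,\ell_1-(p^2+1)\ell_2\bigr)+c_2(Q_1)\cdot e=0.
$$
Then, using the second vanishing $c_2(Q_1)\cdot e=0$ from the same proposition, the residual term drops out and we are left with
$$
c_2(Q_1)\cdot\bigl(p\,\ell_1-(p^2+1)\ell_2\bigr)=0.
$$
Finally, substituting (a) and clearing the factor $1/2$ yields exactly the asserted identity
$$
(\ell_0^2+\ell_1^2-\ell_2^2)\bigl(p\,\ell_1-(p^2+1)\ell_2\bigr)=0.
$$

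There is no real obstacle here: both ingredients are already in place, and the argument is a one-line substitution. The only subtlety worth pointing out in the write-up is that we genuinely need both vanishings from the preceding proposition (not just $c_2(Q_1)\cdot[\overline{D(\psi)}]=0$), because $[\overline{D(\psi)}]$ differs from $p\,\ell_1-(p^2+1)\ell_2$ by the exceptional correction $e$; the separate statement $c_2(Q_1)\cdot e=0$ is what allows us to pass cleanly from a statement about a degeneracy locus to the purely tautological relation we want. The heavy lifting has been done in the proposition (where one verifies that $c_2(Q_1)$ can be represented by a locus disjoint from both $\overline{D(\psi)}$ and the exceptional fibres); the corollary is the clean numerical distillation.
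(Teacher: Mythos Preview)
Your proof is correct and follows essentially the same approach as the paper's own proof: both simply recall $c_2(Q_1)=(\ell_0^2+\ell_1^2-\ell_2^2)/2$ and $[\overline{D(\psi)}]=p\ell_1-(p^2+1)\ell_2+e$, then invoke the two vanishings $c_2(Q_1)\cdot[\overline{D(\psi)}]=0$ and $c_2(Q_1)\cdot e=0$ from the preceding proposition. Your write-up is in fact more explicit than the paper's, which leaves the substitution implicit.
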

\begin{proof}
Recall that  $c_2(Q_1)=(\ell_0^2+\ell_1^2-\ell_2^2)/2$ and
$[\overline{D(\psi)}]=p\ell_1-(p^2+1)\ell_2+e$
with $e$ a class with support in the exceptional fibres.
\end{proof}
By combining Corollary \ref{one-unknown} and Corollary 
\ref{final-stone} we can determine all the intersection numbers.
\begin{corollary}\label{intersection-numbers} We have on $\mathcal{F}_0$
$$
\deg \left[ 
\begin{matrix} \ell_0^3\ell_1\\ \ell_0^3\ell_2\\ \ell_0\ell_1^3\\ \ell_0\ell_1^2\ell_2\\ \ell_0\ell_1\ell_2^2 \\
\end{matrix} \right] =
p\, (p^2+1) \,  \left[ 
\begin{matrix}
p^2-p+1\\ -p^2+p-1\\
-(p-1)^2\\ p^2-p+1 \\ p \\
\end{matrix} \right] \, .
$$
\end{corollary}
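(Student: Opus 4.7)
The plan is to combine the single remaining relation from Corollary \ref{final-stone} with the parametrization of the five unknown intersection numbers by $x=\deg(\ell_0\ell_1^2\ell_2)$ given in Corollary \ref{one-unknown} in order to solve for $x$, and then substitute back.

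First I would take the identity $(\ell_0^2+\ell_1^2-\ell_2^2)(p\ell_1-(p^2+1)\ell_2)=0$ from Corollary \ref{final-stone}, which lives naturally on $\mathcal{F}_1$, multiply both sides by $\ell_0$ to promote it to a degree-$4$ relation on $\mathcal{F}_0$, and discard the term $\ell_0\ell_2^3$, which vanishes by Lemma \ref{vanishing-monomials}. This yields
$$
p\,\ell_0^3\ell_1 - (p^2+1)\,\ell_0^3\ell_2 + p\,\ell_0\ell_1^3 - (p^2+1)\,\ell_0\ell_1^2\ell_2 - p\,\ell_0\ell_1\ell_2^2 = 0.
$$

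Next I would substitute into this identity the formulas of Corollary \ref{one-unknown} (with $\deg(\ell_0\ell_1\ell_2^2)=p^2(p^2+1)$ from Lemma \ref{ell1ell2sq}). Since each of the five quantities is linear in $x$, the result is a single linear equation in $x$. A short collection of terms shows that the coefficient of $x$ equals $-(p^2+1)+2(p^2-p+1)-(p^2+1)=-2p$, while the constant term, after factoring out $p(p^2+1)$ and using the identity $2(p^2-p+1)-(2p^2-3p+2)=p$, simplifies to $2p^2(p^2+1)(p^2-p+1)$. Therefore
$$
x=\deg(\ell_0\ell_1^2\ell_2)=p\,(p^2+1)(p^2-p+1).
$$

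Finally, substituting this value of $x$ back into Corollary \ref{one-unknown} gives the remaining four degrees. The entry $\ell_0^3\ell_2$ becomes $-p(p^2+1)(p^2-p+1)$ immediately. For $\ell_0\ell_1^3$, one uses the identity $2(p^2-p+1)^2-(p^2+1)(2p^2-3p+2)=-p(p-1)^2$ to obtain $-p(p^2+1)(p-1)^2$, matching the statement.

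The main work is already done in the preceding lemmas; in particular Corollary \ref{final-stone} (coming from $c_2(Q_1)\cdot[\overline{D(\psi)}]=0$) supplies the last non-trivial input. After that, the present corollary is pure bookkeeping, and the only mild subtlety is verifying the algebraic identity above that compactifies the expression for $\deg(\ell_0\ell_1^3)$.
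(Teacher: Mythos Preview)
Your proof is correct and follows exactly the approach the paper indicates: combine Corollary \ref{one-unknown} with Corollary \ref{final-stone} to pin down the last unknown $x=\deg(\ell_0\ell_1^2\ell_2)$, then back-substitute. You have simply made explicit the linear algebra that the paper leaves to the reader.
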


Finally we are ready to calculate the coefficient $f_4(p)$ 
of Theorem \ref{2ndThm}.
\begin{theorem}
The class of the supersingular locus 
$S_4 \subset \mathcal{A}_4\otimes {\FF}_{p}$ in the Chow ring
of $\tilde{\mathcal{A}}_4 \otimes {\FF}_p$ equals
$$
[S_4]= (p-1)^3 (p^3-1)(p^4-1)(p^6-1) \lambda_4\lambda_2 \, .
$$
\end{theorem}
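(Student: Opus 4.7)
The plan is to exploit the reduction already achieved by Proposition~\ref{lambda4lambda2}: once we know that $[S_4]=f_4(p)\lambda_4\lambda_2$ (which follows from Theorem~\ref{NPlociThm} together with $\lambda_4\lambda_2=\lambda_1^4/8$ in the tautological ring), the coefficient is pinned down by the single intersection number $\lambda_3\lambda_1\cdot[S_4]$ via $f_4(p)=\lambda_3\lambda_1\cdot[S_4]/v(4)$. So the whole task reduces to computing this intersection number.

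First I would break the computation across the irreducible components of $S_4$. Each irreducible component $S$ is the image of a smooth model $\mathcal{F}_0$ under a morphism of degree $p$ by Lemma~\ref{degree-p}, and the stacky count of components is $N_4=(p^2-1)(p^6-1)v(4)$ by Proposition~\ref{Ng}. Consequently
$$
\lambda_3\lambda_1\cdot[S_4]=\frac{N_4}{p}\,\deg_{\mathcal{F}_0}(\lambda_3\lambda_1),
$$
and the problem is pushed onto a single $\mathcal{F}_0$.

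Next I would plug in the machinery of Sections~\ref{Hodge-bundle} and the computations that precede the theorem. Corollary~\ref{lambda3lambda1S} expresses $\deg_{\mathcal{F}_0}(\lambda_3\lambda_1)$ as $\tfrac12(p-1)^4$ times a fixed linear combination of the five monomials $\ell_0^3\ell_1,\ \ell_0^3\ell_2,\ \ell_0\ell_1^3,\ \ell_0\ell_1^2\ell_2,\ \ell_0\ell_1\ell_2^2$, and Corollary~\ref{intersection-numbers} gives each of their values as $p(p^2+1)$ times a simple polynomial in $p$. The first two monomial contributions cancel, and the remaining three collapse as
$$
-(p-1)^2+3(p^2-p+1)+3p=2(p^2+p+1),
$$
yielding $\deg_{\mathcal{F}_0}(\lambda_3\lambda_1)=p(p-1)^4(p^2+1)(p^2+p+1)$.

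Finally, I would assemble the pieces: dividing by $p$, multiplying by $N_4$, and dividing by $v(4)$ gives
$$
f_4(p)=(p-1)^4(p^2-1)(p^2+1)(p^2+p+1)(p^6-1),
$$
and the factorizations $(p-1)(p+1)(p^2+1)=p^4-1$ and $(p-1)(p^2+p+1)=p^3-1$ rewrite this as $(p-1)^3(p^3-1)(p^4-1)(p^6-1)$, as desired. There is no serious obstacle at this stage, since all of the deep ingredients—the tautological shape of $[S_4]$, the degree of $\mathcal{F}_0\to S$, the count $N_4$, the Chern-class computation on $\mathcal{F}_0$, and the five intersection numbers—have already been established; the only remaining work is the bookkeeping of the arithmetic cancellation above.
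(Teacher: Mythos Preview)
Your proposal is correct and follows essentially the same approach as the paper: reduce to computing $\lambda_3\lambda_1\cdot[S_4]$ via Proposition~\ref{lambda4lambda2}, push the computation to the smooth model $\mathcal{F}_0$ using Lemma~\ref{degree-p} and the count $N_4$, evaluate $\deg_{\mathcal{F}_0}(\lambda_3\lambda_1)$ using Corollaries~\ref{lambda3lambda1S} and~\ref{intersection-numbers}, and then assemble the factors. Your arithmetic bookkeeping (noting that the $\ell_0^3\ell_1$ and $\ell_0^3\ell_2$ contributions cancel and the remaining three sum to $2(p^2+p+1)$) is a slightly cleaner presentation of the same calculation the paper carries out.
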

\begin{proof}
For each irreducible component $S$ of $S_4$ we calculate the degree of $\lambda_3\lambda_1$ on the model
$\mathcal{F}_0$ of $S$. Indeed, we have $[S_4]=a\lambda_4\lambda_2$ with
$a=\lambda_3\lambda_1 [S_4]/v(4)$ by Proposition \ref{lambda4lambda2}. A calculation
using Corollary \ref{intersection-numbers} and taking into account
the degree $p$ of the map $\mathcal{F}_0 \to S$ (see Lemma \ref{degree-p}) 
yields that $\deg(\lambda_3\lambda_1)$ on $S$ equals $1/p$ times
the degree on $\mathcal{F}_0$ of
$$
\begin{aligned}
(p^2-3p+1)\ell_0^3\ell_1+ (2p^2-2p+2)\ell_0^3\ell_2+ (p^2-3p+1)\ell_0\ell_1^3+&\\ 
4(p-1)^2\ell_0\ell_1^2\ell_2 +(5p^2-7p+5)\ell_0\ell_1\ell_2^2&\\
\end{aligned}
$$
and this equals $(p-1)^4 (p^2+p+1)(p^2+1)$. 
Multiplying this with the number
of irreducible components $(p^2-1)(p^6-1) v(4)$ we find the coefficient
 $a= (p-1)^3(p^3-1)(p^4-1)(p^6-1)$. 
\end{proof}
\end{section}

\end{document}